\date{\today}
\newtheorem{theorem}{Theorem}[section]
\newtheorem{proposition}[theorem]{Proposition}
\newtheorem{corollary}[theorem]{Corollary}
\newtheorem{lemma}[theorem]{Lemma}
\theoremstyle{definition}
\newtheorem{example}[theorem]{Example}
\newtheorem{remark}[theorem]{Remark}
\newtheorem{definition}[theorem]{Definition}
\begin{document}

\title[On monoids of injective partial selfmaps almost everywhere
the identity]{On monoids of injective partial selfmaps almost
everywhere the identity}

\author[Ivan~Chuchman]{Ivan~Chuchman}
\address{Department of Mechanics and Mathematics, Ivan Franko Lviv
National University, Universytetska 1, Lviv, 79000, Ukraine}
\email{chuchman\underline{\hskip5pt}\,i@mail.ru}

\author[Oleg~Gutik]{Oleg~Gutik}
\address{Department of Mechanics and Mathematics, Ivan Franko Lviv
National University, Universytetska 1, Lviv, 79000, Ukraine}
\email{o\underline{\hskip5pt}\,gutik@franko.lviv.ua,
ovgutik@yahoo.com}

\keywords{Topological semigroup, semitopological semigroup,
topological inverse semigroup, semigroup of bijective partial
transformations, symmetric inverse semigroup, free semilattice,
ideal, congruence, semigroup with the \textsf{F}-property, Baire
space, hereditary Baire space, embedding.
 }

\subjclass[2010]{Primary 22A15, 20M20. Secondary 06F30, 20M18,
22A26, 54E52, 54H12, 54H15}

\begin{abstract}
In this paper we study the semigroup $\mathscr{I}^{\infty}_\lambda$
of injective partial selfmaps almost everywhere the identity of a
set of infinite cardinality $\lambda$. We describe the Green
relations on $\mathscr{I}^{\infty}_\lambda$, all (two-sided) ideals
and all congruences of the semigroup $\mathscr{I}^{\infty}_\lambda$.
We prove that every Hausdorff hereditary Baire topology $\tau$ on
$\mathscr{I}^{\infty}_\omega$ such that
$(\mathscr{I}^{\infty}_\omega,\tau)$ is a semitopological semigroup
is discrete and describe the closure of the discrete semigroup
$\mathscr{I}^{\infty}_\lambda$ in a topological semigroup. Also we
show that for an infinite cardinal $\lambda$ the discrete semigroup
$\mathscr{I}^{\infty}_\lambda$ does not embed into a compact
topological semigroup and construct two non-discrete Hausdorff
topologies turning $\mathscr{I}^{\infty}_\lambda$ into a topological
inverse semigroup.
\end{abstract}

\maketitle


\section{Introduction and preliminaries}

In this paper all spaces are assumed to be Hausdorff. Furthermore we
shall follow the terminology of \cite{CHK, CP, Engelking1989,
GHKLMS, Ruppert1984}. By $\omega$ we shall denote the first infinite
cardinal and by $|A|$ the cardinality of the set $A$. If $Y$ is a
subspace of a topological space $X$ and $A\subseteq Y$, then by
$\operatorname{cl}_Y(A)$ and $\operatorname{Int}_Y(A)$ we shall
denote the topological closure and the interior of $A$ in $Y$,
respectively.

For a semigroup $S$ we denote the semigroup $S$ with the adjoined
unit by $S^1$ (see \cite{CP}).

An algebraic semigroup $S$ is called \emph{inverse} if for any
element $x\in S$ there exists a unique element $x^{-1}\in S$ (called
the \emph{inverse} of $x$) such that $xx^{-1}x=x$ and
$x^{-1}xx^{-1}=x^{-1}$. If $S$ is an inverse semigroup, then the
function $\operatorname{inv}\colon S\to S$ which assigns to every
element $x$ of $S$ its inverse element $x^{-1}$ is called {\it
inversion}.

If $S$ is an inverse semigroup, then by $E(S)$ we shall denote the
\emph{band} (i.e., the subsemigroup of idempotents) of $S$. If the
band $E(S)$ is a non-empty subset of $S$, then the semigroup
operation on $S$ determines a partial order $\leqslant$ on $E(S)$:
$e\leqslant f$ if and only if $ef=fe=e$. This order is called {\em
natural}. A \emph{semilattice} is a commutative semigroup of
idempotents. A semilattice $E$ is called {\em linearly ordered} or a
\emph{chain} if the semilattice operation induces a linear natural
order on $E$. A \emph{maximal chain} of a semilattice $E$ is a chain
which is properly contained in no other chain of $E$. The Axiom of
Choice implies the existence of maximal chains in any partially
ordered set. According to \cite[Definition~II.5.12]{Petrich1984} a
chain $L$ is called an $\omega$-chain if $L$ is isomorphic to
$\{0,-1,-2,-3,\ldots\}$ with the usual order $\leqslant$. Let $E$ be
a semilattice and $e\in E$. We denote ${\downarrow} e=\{ f\in E\mid
f\leqslant e\}$ and ${\uparrow} e=\{ f\in E\mid e\leqslant f\}$.  By
$(\mathscr{P}_{<\omega}(\lambda),\cup)$ we shall denote the free
semilattice with identity over a cardinal $\lambda\geqslant\omega$,
i.e., $\mathscr{P}_{<\omega}(\lambda)$ is the set of all finite
subsets of $\lambda$ with the binary operation $a\cdot b=a\cup b$,
for $a,b\in\mathscr{P}_{<\omega}(\lambda)$.

If $S$ is a semigroup, then we shall denote by $\mathscr{R}$,
$\mathscr{L}$, $\mathscr{J}$, $\mathscr{D}$ and $\mathscr{H}$ the
Green relations on $S$ (see~\cite{CP}):
\begin{align*}
    &\qquad a\mathscr{R}b \mbox{ if and only if } aS^1=bS^1;\\
    &\qquad a\mathscr{L}b \mbox{ if and only if } S^1a=S^1b;\\
    &\qquad a\mathscr{J}b \mbox{ if and only if } S^1aS^1=S^1bS^1;\\
    &\qquad \mathscr{D}=\mathscr{L}\circ\mathscr{R}=\mathscr{R}\circ\mathscr{L};\\
    &\qquad \mathscr{H}=\mathscr{L}\cap\mathscr{R}.
\end{align*}
The relation $\mathscr{J}$ induced a quasi-order
$\leqslant_\mathscr{J}$ on $S$ as follows:
\begin{equation*}
    a\leqslant_\mathscr{J}b \qquad \mbox{ if and only if } \qquad
    S^1aS^1\subseteq S^1bS^1,
\end{equation*}
for $a,b\in S$. This implies that the inclusion order among
two-sided ideals of $S$ induces a partial order among the
$\mathscr{J}$-equivalence classes:
\begin{equation*}
    J_a\preccurlyeq J_b \qquad \mbox{ if and only if } \qquad
    S^1aS^1\subseteq S^1bS^1,
\end{equation*}
for $a,b\in S$, where by $J_a$ we denote the $\mathscr{J}$-class in
$S$ which contains an element $a\in S$ (see
\cite[Section~2.1]{Howie1995}). Then we may thus  regard
$S/\mathscr{J}$ with the relation $\leqslant$ as a partially ordered
set.

A semigroup $S$ is called \emph{simple} if $S$ does not contain
proper two-sided ideals.

A {\it semitopological} (resp.~\emph{topological}) {\it semigroup}
is a topological space together with a separately (resp. jointly)
continuous semigroup operation. An inverse topological semigroup
with the continuous inversion is called a \emph{topological inverse
semigroup}.

In the remainder of the paper $\lambda$ denotes an infinite
cardinal.

Let $\mathscr{I}_\lambda$ denote the set of all partial one-to-one
transformations of an infinite cardinal $\lambda$ together with the
following semigroup operation: $x(\alpha\beta)=(x\alpha)\beta$ if
$x\in\operatorname{dom}(\alpha\beta)=\{
y\in\operatorname{dom}\alpha\mid
y\alpha\in\operatorname{dom}\beta\}$,  for
$\alpha,\beta\in\mathscr{I}_\lambda$. The semigroup
$\mathscr{I}_\lambda$ is called the \emph{symmetric inverse
semigroup} over the cardinal $\lambda$~(see \cite{CP}). The
symmetric inverse semigroup was introduced by
Wagner~\cite{Wagner1952} and it plays a major role in the theory of
semigroups.

A partial map $\alpha\in\mathscr{I}_\lambda$ is called \emph{almost
everywhere the identity} if the set
$\lambda\setminus\operatorname{dom}\alpha$ is finite and
$(x)\alpha\neq x$ only for finitely many $x\in\lambda$. We denote
\begin{equation*}
 \mathscr{I}^{\infty}_\lambda=\{\alpha\in\mathscr{I}_\lambda\mid\alpha
 \mbox{ is almost everywhere the identity}\}.
\end{equation*}
Obviously, $\mathscr{I}^{\infty}_\lambda$ is an inverse subsemigroup
of the semigroup $\mathscr{I}_\omega$. The semigroup
$\mathscr{I}^{\infty}_\lambda$ is called \emph{the semigroup of
injective partial selfmaps almost everywhere the identity} of
$\lambda$. We shall denote every element $\alpha$ of the semigroup
$\mathscr{I}^{\infty}_\lambda$ by
\begin{equation*}
\left(\left.
    \begin{array}{ccc}
      x_1 & \cdots & x_n \\
      y_1 & \cdots & y_n
    \end{array}
\right| A\right)
\end{equation*}
and this means that the following conditions hold:
\begin{itemize}
  \item[$(i)$] $A$ is the maximal subset of $\lambda$ with the finite
   complement such that $\alpha|_A\colon A\rightarrow A$ is an
   identity map;

  \item[$(ii)$] $\{x_1,\ldots,x_n\}$ and $\{y_1,\ldots,y_n\}$ are
  finite (not necessary non-empty) subsets of $\lambda\setminus A$; \; and

  \item[$(iii)$] $\alpha$ maps $x_i$ into $y_i$ for all
   $i=1,\ldots,n$.
\end{itemize}

We denote the identity of the semigroup
$\mathscr{I}^{\infty}_\lambda$ by $\mathbb{I}$.

Many semigroup theorists have considered topological semigroups of
(continuous) transformations of topological spaces.
Be\u{\i}da~\cite{Beida1980}, Orlov~\cite{Orlov1974, Orlov1974a}, and
Subbiah~\cite{Subbiah1987} have considered semigroup and inverse
semigroup  topologies on semigroups of partial homeomorphisms of
some classes of topological spaces.

Gutik and Pavlyk  \cite{GutikPavlyk2005} considered the special
case of the semigroup $\mathscr{I}_\lambda^n$: an infinite
topological semigroup of $\lambda\times\lambda$-matrix units
$B_\lambda$. They showed that an infinite topological semigroup of
$\lambda\times\lambda$-matrix units $B_\lambda$ does not embed
into a compact topological semigroup and that $B_\lambda$ is
algebraically $h$-closed in the class of topological inverse
semigroups. They also described the Bohr compactification of
$B_\lambda$, minimal semigroup and minimal semigroup inverse
topologies on $B_\lambda$.

Gutik, Lawson and Repov\v{s} \cite{GutikLawsonRepovs2009} introduced
the notion of a semigroup with a tight ideal series and investigated
their closures in semitopological semigroups, in particular, in
inverse semigroups with continuous inversion. As a corollary they
showed that the symmetric inverse semigroup of finite
transformations $\mathscr{I}_\lambda^n$ of infinite cardinal
$\lambda$ is algebraically closed in the class of (semi)topological
inverse semigroups with continuous inversion. They also derived
related results about the nonexistence of (partial)
compactifications of semigroups with a tight ideal series.

Gutik and Reiter \cite{GutikReiter2009} showed that the
topological inverse semigroup $\mathscr{I}_\lambda^n$ is
algebraically $h$-closed in the class of topological inverse
semigroups. They also proved that a topological semigroup $S$ with
countably compact square $S\times S$ does not contain the
semigroup $\mathscr{I}_\lambda^n$ for infinite cardinals $\lambda$
and showed that the Bohr compactification of an infinite
topological semigroup $\mathscr{I}_\lambda^n$ is the trivial
semigroup.

In \cite{GutikReiter2010?} Gutik and Reiter showed that that the
symmetric inverse semigroup of finite transformations
$\mathscr{I}_\lambda^n$ of infinite cardinal $\lambda$ is
algebraically closed in the class of semitopological inverse
semigroups with continuous inversion. Also there they described
all congruences on the semigroup $\mathscr{I}_\lambda^n$ and all
compact and countably compact topologies $\tau$ on
$\mathscr{I}_\lambda^n$ such that $(\mathscr{I}_\lambda^n,\tau)$
is a semitopological semigroup.

Gutik, Pavlyk and Reiter \cite{GutikPavlykReiter2009} showed that a
topological semigroup of finite partial bijections
$\mathscr{I}_\lambda^n$ of an infinite cardinal with a compact
subsemigroup of idempotents is absolutely $H$-closed. They proved
that no Hausdorff countably compact topological semigroup and no
Tychonoff topological semigroup with pseudocompact square contain
$\mathscr{I}_\lambda^n$ as a subsemigroup. They proved that every
continuous homomorphism from a topological semigroup
$\mathscr{I}_\lambda^n$ into a Hausdorff countably compact
topological semigroup or Tychonoff topological semigroup with
pseudocompact square is annihilating. They also gave sufficient
conditions for a topological semigroup $\mathscr{I}_\lambda^1$ to be
non-$H$-closed and showed that the topological inverse semigroup
$\mathscr{I}_\lambda^1$ is absolutely $H$-closed if and only if the
band $E(\mathscr{I}_\lambda^1)$ is compact
\cite{GutikPavlykReiter2009}.

In \cite{GutikRepovs2010?} Gutik and Repov\v{s} studied the
semigroup $\mathscr{I}_{\infty}^{\!\nearrow}(\mathbb{N})$ of partial
cofinite monotone bijective transformations of the set of positive
integers $\mathbb{N}$. They showed that the semigroup
$\mathscr{I}_{\infty}^{\!\nearrow}(\mathbb{N})$ has algebraic
properties similar to the bicyclic semigroup: it is bisimple and all
of its non-trivial group homomorphisms are either isomorphisms or
group homomorphisms. They proved that every locally compact topology
$\tau$ on $\mathscr{I}_{\infty}^{\!\nearrow}(\mathbb{N})$ such that
$(\mathscr{I}_{\infty}^{\!\nearrow}(\mathbb{N}),\tau)$ is a
topological inverse semigroup, is discrete and described the closure
of $(\mathscr{I}_{\infty}^{\!\nearrow}(\mathbb{N}),\tau)$ in a
topological semigroup.

In \cite{ChuchmanGutik2010?} Gutik and Chuchman studied the
semigroup $\mathscr{I}_{\infty}^{\,\Rsh\!\!\!\nearrow}(\mathbb{N})$
of partial co-finite almost monotone bijective transformations of
the set of positive integers $\mathbb{N}$. They showed that the
semigroup $\mathscr{I}_{\infty}^{\,\Rsh\!\!\!\nearrow}(\mathbb{N})$
has algebraic properties similar to the bicyclic semigroup: it is
bisimple and all of its non-trivial group homomorphisms are either
isomorphisms or group homomorphisms. Also they proved that every
Baire topology $\tau$ on
$\mathscr{I}_{\infty}^{\,\Rsh\!\!\!\nearrow}(\mathbb{N})$ such that
$(\mathscr{I}_{\infty}^{\,\Rsh\!\!\!\nearrow}(\mathbb{N}),\tau)$ is
a semitopological semigroup is discrete, described the closure of
$(\mathscr{I}_{\infty}^{\,\Rsh\!\!\!\nearrow}(\mathbb{N}),\tau)$ in
a topological semigroup and constructed non-discrete Hausdorff
semigroup topologies on the semigroup
$\mathscr{I}_{\infty}^{\,\Rsh\!\!\!\nearrow}(\mathbb{N})$.

In this paper we study the semigroup $\mathscr{I}^{\infty}_\lambda$
of injective partial selfmaps almost everywhere the identity of a
set of infinite cardinality $\lambda$. We describe the Green
relations on $\mathscr{I}^{\infty}_\lambda$, all (two-sided) ideals
and all congruences of the semigroup $\mathscr{I}^{\infty}_\lambda$.
We prove that every Hausdorff hereditary Baire topology $\tau$ on
$\mathscr{I}^{\infty}_\omega$ such that
$(\mathscr{I}^{\infty}_\omega,\tau)$ is a semitopological semigroup
is discrete and describe the closure of the discrete semigroup
$\mathscr{I}^{\infty}_\lambda$ in a topological semigroup. Also we
show that for an infinite cardinal $\lambda$ the discrete semigroup
$\mathscr{I}^{\infty}_\lambda$ does not embed into a compact
topological semigroup and construct two non-discrete Hausdorff
topologies turning $\mathscr{I}^{\infty}_\lambda$ into a topological
inverse semigroup.


\section{Algebraic properties of the semigroup
$\mathscr{I}^{\infty}_\lambda$}

The definition of the semigroup $\mathscr{I}^{\infty}_\lambda$
implies the following proposition:

\begin{proposition}\label{proposition-2.1}
A partial map $\alpha\in\mathscr{I}_\lambda$ is an element of the
semigroup $\mathscr{I}^{\infty}_\lambda$ if and only if the
following assertions hold:
\begin{itemize}
    \item[$(i)$] $|\lambda\setminus\operatorname{dom}\alpha|=
     |\lambda\setminus\operatorname{ran}\alpha|$; and

    \item[$(ii)$] there exists a subset
     $A\subseteq\operatorname{dom}\alpha\cap\operatorname{ran}\alpha$
     such that $\lambda\setminus A$ is a finite subset of $\lambda$
     and the restriction $\alpha|_{A}\colon A\rightarrow A$ is the
     identity map.
\end{itemize}
\end{proposition}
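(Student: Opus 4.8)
The plan is to establish the two implications separately. For the forward direction, suppose $\alpha\in\mathscr{I}^{\infty}_\lambda$, that is, $\alpha$ is almost everywhere the identity, and take $A$ to be the fixed-point set $A=\{x\in\operatorname{dom}\alpha\mid(x)\alpha=x\}$. By the definition of an element almost everywhere the identity, $\lambda\setminus\operatorname{dom}\alpha$ is finite and $(x)\alpha\neq x$ for only finitely many $x$, so $\lambda\setminus A=(\lambda\setminus\operatorname{dom}\alpha)\cup\{x\in\operatorname{dom}\alpha\mid(x)\alpha\neq x\}$ is finite; moreover $\alpha|_A$ is by construction the identity map on $A$, and $(x)\alpha=x$ for $x\in A$ gives $A\subseteq\operatorname{ran}\alpha$, hence $A\subseteq\operatorname{dom}\alpha\cap\operatorname{ran}\alpha$. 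This proves $(ii)$. For $(i)$, I would use that $\alpha$ is a bijection of $\operatorname{dom}\alpha$ onto $\operatorname{ran}\alpha$ which fixes $A$ pointwise; therefore it maps $\operatorname{dom}\alpha\setminus A$ bijectively onto $\operatorname{ran}\alpha\setminus A$, and these are finite sets (subsets of the finite set $\lambda\setminus A$) of equal cardinality, say $n$. Since $\lambda\setminus A$ is the disjoint union of $\lambda\setminus\operatorname{dom}\alpha$ and $\operatorname{dom}\alpha\setminus A$, and likewise the disjoint union of $\lambda\setminus\operatorname{ran}\alpha$ and $\operatorname{ran}\alpha\setminus A$, we get $|\lambda\setminus\operatorname{dom}\alpha|=|\lambda\setminus A|-n=|\lambda\setminus\operatorname{ran}\alpha|$.

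For the converse, I would first note that condition $(ii)$ by itself already forces $\alpha\in\mathscr{I}^{\infty}_\lambda$: from $A\subseteq\operatorname{dom}\alpha$ with $\lambda\setminus A$ finite we obtain that $\lambda\setminus\operatorname{dom}\alpha\subseteq\lambda\setminus A$ is finite, and from $\alpha|_A$ being the identity we obtain $\{x\in\operatorname{dom}\alpha\mid(x)\alpha\neq x\}\subseteq\operatorname{dom}\alpha\setminus A\subseteq\lambda\setminus A$, which is again finite; hence $\alpha$ is almost everywhere the identity. Thus $(i)$ is in fact redundant for the ``if'' part, but it is worth keeping so that the two conditions together form a convenient self-contained characterization (and $(i)$ does follow from the forward direction once $\alpha\in\mathscr{I}^{\infty}_\lambda$ is known).

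I do not expect a genuine obstacle here; the one point that needs a little care is the cardinality bookkeeping in $(i)$, namely checking that $\operatorname{dom}\alpha\setminus A$, $\operatorname{ran}\alpha\setminus A$, $\lambda\setminus\operatorname{dom}\alpha$ and $\lambda\setminus\operatorname{ran}\alpha$ are all finite and that the relevant decompositions of $\lambda\setminus A$ are honest disjoint unions, after which the conclusion is just arithmetic of finite cardinals.
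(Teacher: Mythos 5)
Your proof is correct: the paper states Proposition 2.1 as an immediate consequence of the definition and gives no argument, and your write-up simply supplies the routine verification in the natural way (taking $A$ to be the fixed-point set for the forward direction, and the finite-cardinality bookkeeping via $\alpha(\operatorname{dom}\alpha\setminus A)=\operatorname{ran}\alpha\setminus A$). Your side observation that $(ii)$ alone already forces $\alpha\in\mathscr{I}^{\infty}_\lambda$, so that $(i)$ is redundant in the ``if'' direction, is also accurate and consistent with how the proposition is used later.
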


\begin{proposition}\label{proposition-2.2} { }
\begin{itemize}
    \item[$(i)$] An element $\alpha$ of the semigroup
         $\mathscr{I}^{\infty}_\lambda$
         is an idempotent if and only if $(x)\alpha=x$ for every
         $x\in\operatorname{dom}\alpha$.

    \item[$(ii)$] If $\varepsilon,\iota\in
          E(\mathscr{I}^{\infty}_\lambda)$,
          then $\varepsilon\leqslant\iota$ if and only if
          $\operatorname{dom}\varepsilon\subseteq
          \operatorname{dom}\iota$.

    \item[$(iii)$] The semilattice
          $E(\mathscr{I}^{\infty}_\lambda)$ is isomorphic to
          $(\mathscr{P}_{<\omega}(\lambda),\cup)$ under
          the mapping $(\varepsilon)h=\lambda\setminus
          \operatorname{dom}\varepsilon$.

    \item[$(iv)$] Every maximal chain in
          $E(\mathscr{I}^{\infty}_\lambda)$ is an $\omega$-chain.

    \item[$(v)$] $\alpha\mathscr{R}\beta$ in
         $\mathscr{I}^{\infty}_\lambda$ if and only if
         $\operatorname{dom}\alpha=\operatorname{dom}\beta$.

    \item[$(vi)$] $\alpha\mathscr{L}\beta$ in
         $\mathscr{I}^{\infty}_\lambda$ if and only if
         $\operatorname{ran}\alpha=\operatorname{ran}\beta$.

    \item[$(vii)$] $\alpha\mathscr{H}\beta$ in
         $\mathscr{I}^{\infty}_\lambda$ if and only if
         $\operatorname{dom}\alpha=\operatorname{dom}\beta$ and
         $\operatorname{ran}\alpha=\operatorname{ran}\beta$.

    \item[$(viii)$] $\alpha\mathscr{D}\beta$ in
         $\mathscr{I}^{\infty}_\lambda$ if and only if
         $|\lambda\setminus\operatorname{dom}\alpha|=
         |\lambda\setminus\operatorname{dom}\beta|$.

    \item[$(ix)$] If $n$ is a non-negative integer, then for every
         $\alpha,\beta\in\mathscr{I}^{\infty}_\lambda$ such that
         $|\lambda\setminus\operatorname{dom}\alpha|=
         |\lambda\setminus\operatorname{dom}\beta|=n$ there exist
         $\gamma,\delta\in\mathscr{I}^{\infty}_\lambda$ such that
         $\alpha=\gamma\cdot\beta\cdot\delta$ and
         $|\lambda\setminus\operatorname{dom}\gamma|=
         |\lambda\setminus\operatorname{dom}\delta|=n$.

    \item[$(x)$] For every non-negative integer $n$ the set
         $I_n=\{\alpha\in\mathscr{I}^{\infty}_\lambda\mid
         |\lambda\setminus\operatorname{dom}\alpha|\geqslant n\}$ is an
         ideal in $\mathscr{I}^{\infty}_\lambda$. Moreover, for every
         ideal $I$ in $\mathscr{I}^{\infty}_\lambda$ there exists an
         integer $n\geqslant 0$ such that $I$ is equal to~$I_n$.

    \item[$(xi)$] $\mathscr{D}=\mathscr{J}$ in
         $\mathscr{I}^{\infty}_\lambda$.

    \item[$(xii)$] If $\lambda_1$ and $\lambda_2$ are infinite
         cardinals such that $\lambda_1\leqslant\lambda_2$ then
         $\mathscr{I}^{\infty}_{\lambda_1}$ is a subsemigroup of the
         semigroup $\mathscr{I}^{\infty}_{\lambda_2}$.

    \item[$(xiii)$]
         $(\mathscr{I}^{\infty}_{\lambda}/\mathscr{J},\preccurlyeq)$
         is an $\omega$-chain for any infinite cardinal $\lambda$.
\end{itemize}
\end{proposition}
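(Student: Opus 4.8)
The strategy is to establish the statements roughly in the order listed, since each later part leans on the description of idempotents and of Green's relations obtained in the earlier parts; throughout I will work with the concrete representation $\left(\left.\begin{smallmatrix} x_1 & \cdots & x_n \\ y_1 & \cdots & y_n\end{smallmatrix}\right| A\right)$ and with the standard facts about $\mathscr{I}_\lambda$, namely that $\alpha\mathscr{R}\beta$ iff $\operatorname{dom}\alpha=\operatorname{dom}\beta$, $\alpha\mathscr{L}\beta$ iff $\operatorname{ran}\alpha=\operatorname{ran}\beta$, and that $\mathscr{I}^\infty_\lambda$ is an inverse subsemigroup.  Part $(i)$ is immediate from the definition of idempotent in an inverse semigroup together with Proposition~\ref{proposition-2.1}: $\alpha^2=\alpha$ forces $\alpha$ to be a partial identity. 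Part $(ii)$ is the translation of the natural partial order on idempotents of $\mathscr{I}_\lambda$: $\varepsilon\leqslant\iota$ means $\varepsilon\iota=\varepsilon$, and for partial identities this says exactly $\operatorname{dom}\varepsilon\subseteq\operatorname{dom}\iota$. Part $(iii)$ follows by checking that $h\colon\varepsilon\mapsto\lambda\setminus\operatorname{dom}\varepsilon$ is a bijection onto the finite subsets of $\lambda$ (each finite subset is the complement of the domain of a unique partial identity) and that it carries the product $\varepsilon\iota$, whose domain is $\operatorname{dom}\varepsilon\cap\operatorname{dom}\iota$, to the union of the complements; part $(iv)$ then follows because a maximal chain in $(\mathscr{P}_{<\omega}(\lambda),\cup)$, built by adjoining one point at a time starting from $\varnothing$, is order-isomorphic to $\{0,-1,-2,\dots\}$.

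For the Green's-relation statements $(v)$–$(viii)$ and $(xi)$: parts $(v)$ and $(vi)$ are obtained from the corresponding characterization in $\mathscr{I}_\lambda$ by verifying that if $\operatorname{dom}\alpha=\operatorname{dom}\beta$ with $\alpha,\beta\in\mathscr{I}^\infty_\lambda$, then the element $\gamma=\alpha^{-1}\beta$ (which fixes a cofinite set, hence lies in $\mathscr{I}^\infty_\lambda$) is an $\mathscr{R}$-witness internal to the subsemigroup, and dually for $\mathscr{L}$; part $(vii)$ is then just $\mathscr{H}=\mathscr{L}\cap\mathscr{R}$. Part $(viii)$ is the delicate one in this block: $\mathscr{D}=\mathscr{R}\circ\mathscr{L}$, so $\alpha\mathscr{D}\beta$ iff there is $\gamma\in\mathscr{I}^\infty_\lambda$ with $\operatorname{dom}\alpha=\operatorname{dom}\gamma$ and $\operatorname{ran}\gamma=\operatorname{ran}\beta$; such a $\gamma$ exists precisely when one can biject $\operatorname{dom}\alpha$ onto $\operatorname{ran}\beta$ by a map that is the identity off a finite set, and a short combinatorial argument shows this is possible iff $|\lambda\setminus\operatorname{dom}\alpha|=|\lambda\setminus\operatorname{ran}\beta|$, i.e. (using Proposition~\ref{proposition-2.1}$(i)$) iff $|\lambda\setminus\operatorname{dom}\alpha|=|\lambda\setminus\operatorname{dom}\beta|$. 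I expect the verification that the witness $\gamma$ can be chosen inside $\mathscr{I}^\infty_\lambda$ (rather than merely in $\mathscr{I}_\lambda$) to be the main technical point of the whole proposition, and part $(ix)$ is really the quantitative refinement of the same idea: given $\alpha,\beta$ with $|\lambda\setminus\operatorname{dom}\alpha|=|\lambda\setminus\operatorname{dom}\beta|=n$, one builds explicit $\gamma,\delta$ of defect exactly $n$ with $\alpha=\gamma\beta\delta$ by matching up the finitely many points moved by $\alpha$ and $\beta$ through finite partial bijections, again checking the defect bookkeeping.

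The ideal statement $(x)$ comes next: that each $I_n$ is an ideal follows because composition can only enlarge the complement of the domain (if $\gamma\in\mathscr{I}^\infty_\lambda$ then $\lambda\setminus\operatorname{dom}(\gamma\alpha)\supseteq\lambda\setminus\operatorname{dom}\alpha$ up to the finiteness bookkeeping, and similarly on the other side); conversely, given a nonzero ideal $I$ pick $\alpha\in I$ of minimal defect $n$, then $I\subseteq I_n$ by the just-noted monotonicity, and $I_n\subseteq I$ because by $(ix)$ every element of defect $\geqslant n$, being in the same $\mathscr{J}$-class as or below $\alpha$, lies in $S^1\alpha S^1\subseteq I$. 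Part $(xi)$, $\mathscr{D}=\mathscr{J}$, then follows by comparing $(viii)$ with $(x)$: from $(x)$ the $\mathscr{J}$-classes are exactly the ``defect $=n$'' layers $I_n\setminus I_{n+1}$, which by $(viii)$ are precisely the $\mathscr{D}$-classes. Part $(xii)$ is a one-line observation that an element of $\mathscr{I}^\infty_{\lambda_1}$, extended by the identity on $\lambda_2\setminus\lambda_1$, lies in $\mathscr{I}^\infty_{\lambda_2}$ and that this inclusion respects composition; and part $(xiii)$ combines $(x)$, $(xi)$ and the definition of $\preccurlyeq$: $J_\alpha\preccurlyeq J_\beta$ iff $S^1\alpha S^1\subseteq S^1\beta S^1$ iff the defect of $\alpha$ is $\geqslant$ the defect of $\beta$, so $(\mathscr{I}^\infty_\lambda/\mathscr{J},\preccurlyeq)$ is order-isomorphic to $(\omega,\geqslant)\cong\{0,-1,-2,\dots\}$, an $\omega$-chain.
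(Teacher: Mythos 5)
Your proposal is correct and follows essentially the same route as the paper: treat $(i)$--$(iv)$ as direct consequences of the definition, characterize $\mathscr{R}$, $\mathscr{L}$, $\mathscr{H}$ via equality of domains/ranges using inverse-semigroup witnesses (the paper invokes the $\alpha\alpha^{-1}=\beta\beta^{-1}$ criterion where you exhibit $\alpha^{-1}\beta$ directly, which is the same computation), prove $(viii)$ and $(ix)$ by explicitly constructing the intermediate almost-identity bijections matching the finitely many displaced points with the defect bookkeeping, and derive $(x)$, $(xi)$, $(xiii)$ from the monotonicity of the defect under multiplication together with $(ix)$, exactly as the paper does. The small variations (your derivation of $(xi)$ from $(viii)$ and $(x)$ rather than directly from $(ix)$) are inessential.
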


\begin{proof}
Statements $(i)-(iv)$ are trivial and they follow from the
definition of the semigroup $\mathscr{I}^{\infty}_\lambda$.

$(v)$ Let be $\alpha,\beta\in\mathscr{I}^{\infty}_\lambda$ such that
$\alpha\mathscr{R}\beta$. Since $\alpha\mathscr{I}^{\infty}_\lambda=
\beta\mathscr{I}^{\infty}_\lambda$ and
$\mathscr{I}^{\infty}_\lambda$ is an inverse semigroup, Theorem~1.17
\cite{CP} implies that $\alpha\mathscr{I}^{\infty}_\lambda=
\alpha\alpha^{-1}\mathscr{I}^{\infty}_\lambda$,
$\beta\mathscr{I}^{\infty}_\lambda=
\beta\beta^{-1}\mathscr{I}^{\infty}_\lambda$ and hence
$\alpha\alpha^{-1}=\beta\beta^{-1}$. Therefore we get that
$\operatorname{dom}\alpha=\operatorname{dom}\beta$.

Conversely, let be $\alpha,\beta\in\mathscr{I}^{\infty}_\lambda$
such that $\operatorname{dom}\alpha=\operatorname{dom}\beta$. Then
$\alpha\alpha^{-1}=\beta\beta^{-1}$. Since
$\mathscr{I}^{\infty}_\lambda$ is an inverse semigroup, Theorem~1.17
\cite{CP} implies that $\alpha\mathscr{I}^{\infty}_\lambda=
\alpha\alpha^{-1}\mathscr{I}^{\infty}_\lambda=
\beta\mathscr{I}^{\infty}_\lambda$ and hence
$\alpha\mathscr{I}^{\infty}_\lambda=
\beta\mathscr{I}^{\infty}_\lambda$.

The proof of statement $(vi)$ is similar to $(v)$.

Statement $(vii)$ follows from $(v)$ and $(vi)$.

$(viii)$ Let $\alpha,\beta\in\mathscr{I}^{\infty}_\lambda$ be such
that $\alpha\mathscr{D}\beta$. Then there exists
$\gamma\in\mathscr{I}^{\infty}_\lambda$ such that
$\alpha\mathscr{L}\gamma$ and $\gamma\mathscr{R}\beta$. Therefore by
statements $(v)$ and $(vi)$ we have that
$\operatorname{ran}\alpha=\operatorname{ran}\gamma$ and
$\operatorname{dom}\gamma=\operatorname{dom}\beta$. Then
Proposition~\ref{proposition-2.1} implies that
$|\lambda\setminus\operatorname{ran}\gamma|=
|\lambda\setminus\operatorname{dom}\gamma|$ and
$|\lambda\setminus\operatorname{ran}\beta|=
|\lambda\setminus\operatorname{dom}\beta|$, and hence we get that
$|\lambda\setminus\operatorname{dom}\alpha|=
|\lambda\setminus\operatorname{dom}\beta|$.

Let $\alpha$ and $\beta$ are elements of the semigroup
$\mathscr{I}^{\infty}_\lambda$ such that
$|\lambda\setminus\operatorname{dom}\alpha|=
|\lambda\setminus\operatorname{dom}\beta|$. Then
Proposition~\ref{proposition-2.1} implies that
$|\lambda\setminus\operatorname{ran}\alpha|=
|\lambda\setminus\operatorname{dom}\alpha|$ and
$|\lambda\setminus\operatorname{ran}\beta|=
|\lambda\setminus\operatorname{dom}\beta|$. Let $A_\alpha$ and
$A_\beta$ be maximal subsets of $\lambda$ such that the sets
$\lambda\setminus A_\alpha$ and $\lambda\setminus A_\beta$ are
finite and the restrictions $\alpha|_{A_\alpha}\colon
A_\alpha\rightarrow A_\alpha$ and $\beta|_{A_\beta}\colon
A_\beta\rightarrow A_\beta$ are identity maps. We put
$A=A_\alpha\cap A_\beta$. Since $\lambda\setminus A_\alpha$ and
$\lambda\setminus A_\beta$ are finite subsets of $\lambda$ we
conclude that $\lambda\setminus A$ is a finite subset of $\lambda$
too. Since $|\lambda\setminus\operatorname{dom}\alpha|=
|\lambda\setminus\operatorname{dom}\beta|<\omega$
Proposition~\ref{proposition-2.1} implies that
\begin{equation*}
    |\operatorname{dom}\alpha\setminus A|=
    |\operatorname{ran}\alpha\setminus A|=
    |\operatorname{dom}\beta\setminus A|=
    |\operatorname{ran}\beta\setminus A|=n
\end{equation*}
for some non-negative integer $n$. If $n=0$, then $\alpha=\beta$.
Suppose that $n\geqslant 1$. Let $\{x_1,\ldots,x_n\}=
\operatorname{ran}\alpha\setminus A$ and $\{y_1,\ldots,y_n\}=
\operatorname{dom}\beta\setminus A$. We define
\begin{equation*}
\gamma= \left(\left.
    \begin{array}{ccc}
      y_1 & \cdots & y_n\\
      x_1 & \cdots & x_n
    \end{array}
\right| A\right).
\end{equation*}
Then by statements $(v)$ and $(vi)$ we have that
$\alpha\mathscr{L}\gamma$ and $\gamma\mathscr{R}\beta$ in
$\mathscr{I}^{\infty}_\lambda$. Hence $\alpha\mathscr{D}\beta$ in
$\mathscr{I}^{\infty}_\lambda$.

$(ix)$ Let $\alpha$ and $\beta$ be arbitrary elements of the
semigroup $\mathscr{I}^{\infty}_\lambda$ such that
$|\lambda\setminus\operatorname{dom}\alpha|=
|\lambda\setminus\operatorname{dom}\beta|=n$ for some non-negative
integer $n$. Let $A_\alpha$ and $A_\beta$ be maximal subsets of
$\lambda$ such that the sets $\lambda\setminus A_\alpha$ and
$\lambda\setminus A_\beta$ are finite and the restrictions
$\alpha|_{A_\alpha}\colon A_\alpha\rightarrow A_\alpha$ and
$\beta|_{A_\beta}\colon A_\beta\rightarrow A_\beta$ are identity
maps. We put $A=A_\alpha\cap A_\beta$. Since $\lambda\setminus
A_\alpha$ and $\lambda\setminus A_\beta$ are finite subsets of
$\lambda$ we conclude that $\lambda\setminus A$ is a finite subset
of $\lambda$ too. Since $|\lambda\setminus\operatorname{dom}\alpha|=
|\lambda\setminus\operatorname{dom}\beta|$ the definition of the
semigroup $\mathscr{I}^{\infty}_\lambda$ implies that
$|\operatorname{dom}\alpha\setminus A|=
|\operatorname{dom}\beta\setminus A|<\omega$. If
$\operatorname{dom}\alpha\setminus A=
\operatorname{dom}\beta\setminus A=\varnothing$ then $\alpha=\beta$
and hence $\alpha=\gamma\cdot\beta\cdot\delta$ for
$\gamma=\delta=\mathbb{I}$. Otherwise we put
$\{x_1,\ldots,x_k\}=\operatorname{dom}\alpha\setminus A$,
$\{y_1,\ldots,y_k\}=\operatorname{dom}\beta\setminus A$,
$b_1=(y_1)\beta,\ldots,b_k=(y_k)\beta$ and
$a_1=(x_1)\alpha,\ldots,a_k=(x_k)\alpha$, for some positive integer
$k$. We define
\begin{equation*}
\gamma= \left(\left.
    \begin{array}{ccc}
      x_1 & \cdots & x_k\\
      y_1 & \cdots & y_k
    \end{array}
\right| A\right)
 \qquad \mbox{ and } \qquad
\delta= \left(\left.
    \begin{array}{ccc}
      b_1 & \cdots & b_k\\
      a_1 & \cdots & a_k
    \end{array}
\right| A\right).
\end{equation*}
Then $\gamma,\delta\in\mathscr{I}^{\infty}_\lambda$,
$|\lambda\setminus\operatorname{dom}\gamma|=
|\lambda\setminus\operatorname{dom}\delta|=n$ and
$\alpha=\gamma\cdot\beta\cdot\delta$.

$(x)$ Let $\alpha$ and $\beta$ be arbitrary elements of the
semigroup $\mathscr{I}^{\infty}_\lambda$. Since $\alpha$ and $\beta$
are injective partial selfmaps almost everywhere the identity of the
cardinal $\lambda$ we conclude that
\begin{equation*}
|\lambda\setminus\operatorname{dom}(\alpha\cdot\beta)|\geqslant
\max\{|\lambda\setminus\operatorname{dom}\alpha|,
|\lambda\setminus\operatorname{dom}\beta|\}.
\end{equation*}
This implies the first assertion of statement $(x)$.

Let $I$ be an ideal in $\mathscr{I}^{\infty}_\lambda$. Then the
definition of the semigroup $\mathscr{I}^{\infty}_\lambda$ implies
that there exists $\alpha\in I$ such that
\begin{equation*}
    |\lambda\setminus\operatorname{dom}\alpha|=
    \min\{|\lambda\setminus\operatorname{dom}\gamma|\mid\gamma\in I\}.
\end{equation*}
Then $|\lambda\setminus\operatorname{dom}\alpha|=n$ for some integer
$n\geqslant 0$. Hence $I\subseteq I_n$ and by statement $(ix)$ we
get that $I_n\subseteq I$. This implies the second assertion of the
statement.

Statement $(xi)$ follows from statement $(ix)$.

$(xii)$ Let
 $
\alpha= \left(\left.
    \begin{array}{ccc}
      x_1 & \cdots & x_n \\
      y_1 & \cdots & y_n
    \end{array}
\right| A\right)
 $
be an arbitrary element of the semigroup
$\mathscr{I}^{\infty}_{\lambda_1}$ and
$B=\lambda_2\setminus\lambda_1$. We put
\begin{equation*}
\widetilde{\alpha}= \left(\left.
    \begin{array}{ccc}
      x_1 & \cdots & x_n \\
      y_1 & \cdots & y_n
    \end{array}
\right| A\cup B\right).
\end{equation*}
Obviously that $\widetilde{\alpha}\in
\mathscr{I}^{\infty}_{\lambda_2}$. Simple verifications show that
the map $h\colon\mathscr{I}^{\infty}_{\lambda_1}\rightarrow
\mathscr{I}^{\infty}_{\lambda_2}$ defined by the formula
$(\alpha)h=\widetilde{\alpha}$ is an isomorphic embedding of the
semigroup $\mathscr{I}^{\infty}_{\lambda_1}$ into
$\mathscr{I}^{\infty}_{\lambda_2}$.

Statement $(xiii)$ follows from items $(viii)$ and $(xi)$.
\end{proof}

Later we shall need the following proposition:

\begin{proposition}\label{proposition-2.3}
Let $\lambda$ be an arbitrary infinite cardinal. Then for every
finite subset $\{x_1,\ldots,x_n\}$ of $\lambda$ the semigroups
$\mathscr{I}^{\infty}_\lambda$ and $\mathscr{I}^{\infty}_\eta$ are
isomorphic for $\eta=\lambda\setminus\{x_1,\ldots,x_n\}$.
\end{proposition}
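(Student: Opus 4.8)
The plan is to realise the claimed isomorphism as conjugation by a bijection of the underlying sets. Since $\lambda$ is infinite and $\{x_1,\ldots,x_n\}$ is finite, we have $|\eta|=|\lambda|$, so there is a bijection $\varphi\colon\lambda\to\eta$ (if $n=0$ then $\eta=\lambda$ and there is nothing to prove). Such a $\varphi$ induces the standard isomorphism $\widehat{\varphi}\colon\mathscr{I}_\lambda\to\mathscr{I}_\eta$ of symmetric inverse semigroups given by $(\alpha)\widehat{\varphi}=\varphi^{-1}\alpha\varphi$, where composition is written on the right as in the paper, so that $(y)\varphi^{-1}\alpha\varphi=\left(\left((y)\varphi^{-1}\right)\alpha\right)\varphi$ for $y\in\eta$, with $\varphi^{-1}\colon\eta\to\lambda$. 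One checks in the usual way that $\operatorname{dom}\left((\alpha)\widehat{\varphi}\right)=(\operatorname{dom}\alpha)\varphi$, that $\widehat{\varphi}$ is a semigroup homomorphism (using $\varphi\varphi^{-1}=\operatorname{id}_\lambda$), and that it is bijective with inverse $\widehat{\varphi^{-1}}$. It therefore suffices to show that $\widehat{\varphi}$ maps $\mathscr{I}^{\infty}_\lambda$ onto $\mathscr{I}^{\infty}_\eta$.

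First I would verify that $(\alpha)\widehat{\varphi}\in\mathscr{I}^{\infty}_\eta$ whenever $\alpha\in\mathscr{I}^{\infty}_\lambda$. Since $\varphi$ is a bijection and $\operatorname{dom}\left((\alpha)\widehat{\varphi}\right)=(\operatorname{dom}\alpha)\varphi$, we get $\eta\setminus\operatorname{dom}\left((\alpha)\widehat{\varphi}\right)=(\lambda\setminus\operatorname{dom}\alpha)\varphi$, which is finite because $\lambda\setminus\operatorname{dom}\alpha$ is finite. For the fixed-point clause, take any $x\in\operatorname{dom}\alpha$ and put $y=(x)\varphi\in\operatorname{dom}\left((\alpha)\widehat{\varphi}\right)$; then $(y)\left((\alpha)\widehat{\varphi}\right)=((x)\alpha)\varphi$, and by injectivity of $\varphi$ this equals $y=(x)\varphi$ if and only if $(x)\alpha=x$. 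Hence $(\alpha)\widehat{\varphi}$ moves precisely the points $(x)\varphi$ for which $\alpha$ moves $x$, and there are only finitely many such $x$; so $(\alpha)\widehat{\varphi}\in\mathscr{I}^{\infty}_\eta$.

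For surjectivity I would apply the same computation to the bijection $\varphi^{-1}\colon\eta\to\lambda$: it shows $(\beta)\widehat{\varphi^{-1}}\in\mathscr{I}^{\infty}_\lambda$ for every $\beta\in\mathscr{I}^{\infty}_\eta$, and since $(\beta)\widehat{\varphi^{-1}}\widehat{\varphi}=\beta$, this $\beta$ lies in the image of $\mathscr{I}^{\infty}_\lambda$ under $\widehat{\varphi}$. Together with the previous paragraph, $\widehat{\varphi}$ restricts to an isomorphism $\mathscr{I}^{\infty}_\lambda\to\mathscr{I}^{\infty}_\eta$, as required. None of the steps is a genuine obstacle; the only point deserving care is transporting the ``moves only finitely many points'' condition through the conjugation, which is exactly the equivalence $(y)\left((\alpha)\widehat{\varphi}\right)=y\iff(x)\alpha=x$ above, together with the trivial observation that a bijection sends cofinite sets to cofinite sets. (Alternatively one could prove the case $n=1$ for an arbitrary infinite set in place of a cardinal and iterate $n$ times, but the single bijection $\varphi\colon\lambda\to\eta$ makes this unnecessary.)
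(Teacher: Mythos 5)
Your proposal is correct and takes essentially the same route as the paper: the paper also fixes a bijection $f\colon\lambda\rightarrow\eta$ and defines the isomorphism by the intertwining relation $((x)f)\alpha_\eta=((x)\alpha_\lambda)f$, which is exactly your conjugation $(\alpha)\widehat{\varphi}=\varphi^{-1}\alpha\varphi$. Your explicit check that conjugation preserves the ``almost everywhere the identity'' condition is, if anything, slightly more careful than the paper's verification, but the underlying idea is identical.
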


\begin{proof}
Since $\lambda$ is infinite we conclude that there exists a
bijective map $f\colon\lambda\rightarrow\eta$. Then the bijection
$f$ generates a map $h\colon\mathscr{I}^{\infty}_\lambda\rightarrow
\mathscr{I}^{\infty}_\eta$ such that the following condition holds:
\begin{equation*}
    (\alpha_\lambda)h=\alpha_\eta \quad \mbox{ if and only if }
    \quad ((x)f)\alpha_\eta=((x)\alpha_\lambda)f
    \; \mbox{ for every } \; x\in\lambda,
\end{equation*}
where $\alpha_\lambda\in\mathscr{I}^{\infty}_\lambda$ and
$\alpha_\eta\in\mathscr{I}^{\infty}_\eta$.

Now we shall show that so defined map $h$ is injective. Suppose to
the contrary that there exist distinct elements
$\alpha_\lambda,\beta_\lambda\in\mathscr{I}^{\infty}_\lambda$ such
that $(\alpha_\lambda)h=(\beta_\lambda)h$. We denote
$\alpha_\eta=(\alpha_\lambda)h$ and $\beta_\eta=(\beta_\lambda)h$.
Then $\operatorname{dom}\alpha_\eta=\operatorname{dom}\beta_\eta$
and $\operatorname{ran}\alpha_\eta=\operatorname{ran}\beta_\eta$ and
since $f\colon\lambda\rightarrow\eta$ is a bijective map we conclude
that
$\operatorname{dom}\alpha_\lambda=\operatorname{dom}\beta_\lambda$
and
$\operatorname{ran}\alpha_\lambda=\operatorname{ran}\beta_\lambda$.
Therefore there exists $x\in\operatorname{ran}\alpha_\lambda$ such
that $(x)\alpha_\lambda\neq(x)\beta_\lambda$. Since
$(\alpha_\lambda)h=(\beta_\lambda)h$ we have that
$((x)f)\alpha_\eta=((x)f)\beta_\eta$. But
$((x)f)\alpha_\eta=((x)\alpha_\lambda)f$ and
$((x)f)\beta_\eta=((x)\beta_\lambda)f$ and since the map
$f\colon\lambda\rightarrow\eta$ is bijective we conclude that
$(x)\alpha_\lambda=(x)\beta_\lambda$, a contradiction. The obtained
contradiction implies that the map
$h\colon\mathscr{I}^{\infty}_\lambda\rightarrow
\mathscr{I}^{\infty}_\eta$ is injective.

Let
\begin{equation*}
    \alpha_\eta=
\left(\left.
    \begin{array}{ccc}
      x_1 & \cdots & x_n \\
      y_1 & \cdots & y_n
    \end{array}
\right| A\right)
\end{equation*}
be an arbitrary element of the semigroup
$\mathscr{I}^{\infty}_\eta$, where $A\subseteq\eta$ and
$x_1,\ldots,x_n,y_1,\ldots,y_n\in\eta$. Since the map
$f\colon\lambda\rightarrow\eta$ is bijective we conclude that
\begin{equation*}
    \alpha_\lambda=
\left(\left.
    \begin{array}{ccc}
      (x_1)f^{-1} & \cdots & (x_n)f^{-1} \\
      (y_1)f^{-1} & \cdots & (y_n)f^{-1}
    \end{array}
\right| (A)f^{-1}\right)
\end{equation*}
is a partial bijective map from $\lambda$ into $\lambda$ such that
the sets $\lambda\setminus\operatorname{dom}\alpha_\lambda$ and
$\lambda\setminus\operatorname{ran}\alpha_\lambda$ are finite.
Therefore $\alpha_\lambda\in\mathscr{I}^{\infty}_\lambda$ and hence
the map $h\colon\mathscr{I}^{\infty}_\lambda\rightarrow
\mathscr{I}^{\infty}_\eta$ is bijective.

Now we prove that the map
$h\colon\mathscr{I}^{\infty}_\lambda\rightarrow
\mathscr{I}^{\infty}_\eta$ is a homomorphism. We fix arbitrary
elements
$\alpha_\lambda,\beta_\lambda\in\mathscr{I}^{\infty}_\lambda$ and
denote $\alpha_\eta=(\alpha_\lambda)h$ and
$\beta_\eta=(\beta_\lambda)h$. Then for every
$x\in\operatorname{ran}\alpha_\lambda$ we have that
\begin{equation*}
    \big((x)f\big)(\alpha_\eta\cdot\beta_\eta)=
    \Big(\big((x)f\big)\alpha_\eta\Big)\beta_\eta=
    \Big(\big((x)\alpha_\lambda\big)f\Big)\beta_\eta=
    \Big(\big((x)\alpha_\lambda\big)\beta_\lambda\Big)f=
    \big((x)(\alpha_\lambda\cdot\beta_\lambda)\big)f,
\end{equation*}
and hence
$(\alpha_\lambda\cdot\beta_\lambda)h=\alpha_\eta\cdot\beta_\eta=
(\alpha_\lambda)h\cdot(\beta_\lambda)h$.

Therefore $h$ is an isomorphism from the semigroup
$\mathscr{I}^{\infty}_\lambda$ onto $\mathscr{I}^{\infty}_\eta$.
\end{proof}

\begin{proposition}\label{proposition-2.4}
Let $\lambda$ be an arbitrary infinite cardinal. Then for every
idempotent $\varepsilon$ of the semigroup
$\mathscr{I}^{\infty}_\lambda$ the semigroups
$\mathscr{I}^{\infty}_\lambda(\varepsilon)=\varepsilon\cdot
\mathscr{I}^{\infty}_\lambda\cdot\varepsilon$ and
$\mathscr{I}^{\infty}_\lambda$ are isomorphic.
\end{proposition}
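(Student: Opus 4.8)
The plan is to fix an idempotent $\varepsilon\in\mathscr{I}^{\infty}_\lambda$ and observe that $\varepsilon$ is, by Proposition~\ref{proposition-2.2}$(i)$, the identity map on a cofinite subset $E=\operatorname{dom}\varepsilon\subseteq\lambda$, so that $\lambda\setminus E=\{x_1,\ldots,x_n\}$ is finite. The key observation is that the local submonoid $\varepsilon\cdot\mathscr{I}^{\infty}_\lambda\cdot\varepsilon$ consists exactly of those $\alpha\in\mathscr{I}_\lambda$ which are almost everywhere the identity and satisfy $\operatorname{dom}\alpha\subseteq E$ and $\operatorname{ran}\alpha\subseteq E$; this is because multiplying on both sides by $\varepsilon$ restricts both domain and range to $E$, and conversely any such $\alpha$ is fixed by this restriction. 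I would verify this description first (a short calculation with the domain/range formulas for products in $\mathscr{I}_\lambda$), and note that such $\alpha$ is precisely an element of $\mathscr{I}_E$ which is almost everywhere the identity \emph{as a selfmap of $E$}, since $E\setminus\operatorname{dom}\alpha=\lambda\setminus\operatorname{dom}\alpha$ up to the finite set $\{x_1,\ldots,x_n\}$, hence still finite, and $\alpha$ moves only finitely many points of $E$.

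Having identified $\varepsilon\cdot\mathscr{I}^{\infty}_\lambda\cdot\varepsilon$ with $\mathscr{I}^{\infty}_E$, where $E=\lambda\setminus\{x_1,\ldots,x_n\}$, I would then invoke Proposition~\ref{proposition-2.3} directly: since $\lambda$ is infinite and $\{x_1,\ldots,x_n\}$ is a finite subset, $\mathscr{I}^{\infty}_\lambda$ and $\mathscr{I}^{\infty}_{E}$ are isomorphic. Composing the identification $\varepsilon\cdot\mathscr{I}^{\infty}_\lambda\cdot\varepsilon=\mathscr{I}^{\infty}_E$ with that isomorphism gives the desired isomorphism $\mathscr{I}^{\infty}_\lambda(\varepsilon)\cong\mathscr{I}^{\infty}_\lambda$.

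The one genuinely careful point — and the step I expect to be the main obstacle — is the claim that the map $\alpha\mapsto\varepsilon\alpha\varepsilon$ is the identity on the set of $\alpha$ with $\operatorname{dom}\alpha,\operatorname{ran}\alpha\subseteq E$, i.e.\ that $\varepsilon\cdot\mathscr{I}^{\infty}_\lambda\cdot\varepsilon$ equals (not merely contains) the set $\{\alpha\in\mathscr{I}^{\infty}_\lambda\mid \operatorname{dom}\alpha\cup\operatorname{ran}\alpha\subseteq E\}$, and that this latter set is literally $\mathscr{I}^{\infty}_E$ under the obvious correspondence. One must check that restriction-to-$E$ does not accidentally enlarge the complement of the domain beyond finiteness (it does not, since $\lambda\setminus E$ is finite) and that the binary operation inherited from $\mathscr{I}^{\infty}_\lambda$ on this set coincides with the operation of $\mathscr{I}^{\infty}_E$ (it does, since composition of partial maps is computed the same way whether one regards them inside $\lambda$ or inside $E$, as all relevant points already lie in $E$). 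Once this book-keeping is dispatched, the rest is an immediate appeal to Proposition~\ref{proposition-2.3}.

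Alternatively — and perhaps more cleanly for exposition — one can avoid even naming the set above and argue as follows: the map $\psi\colon\mathscr{I}^{\infty}_E\to\mathscr{I}^{\infty}_\lambda(\varepsilon)$ sending $\beta\in\mathscr{I}^{\infty}_E$ to the partial selfmap of $\lambda$ agreeing with $\beta$ on $E$ and undefined on $\{x_1,\ldots,x_n\}$ is a well-defined homomorphism whose image is $\varepsilon\cdot\mathscr{I}^{\infty}_\lambda\cdot\varepsilon$, and it is injective and surjective by the domain/range characterization of $\varepsilon\cdot\mathscr{I}^{\infty}_\lambda\cdot\varepsilon$; then $\mathscr{I}^{\infty}_\lambda(\varepsilon)\cong\mathscr{I}^{\infty}_E\cong\mathscr{I}^{\infty}_\lambda$, the last isomorphism again being Proposition~\ref{proposition-2.3}. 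I would present whichever of these two formulations keeps the verification shortest; in either case the proof is essentially a reduction to the already-established Proposition~\ref{proposition-2.3}.
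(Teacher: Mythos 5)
Your proposal is correct and follows essentially the same route as the paper: the paper likewise identifies $\varepsilon\cdot\mathscr{I}^{\infty}_\lambda\cdot\varepsilon$ with $\{\alpha\in\mathscr{I}^{\infty}_\lambda\mid \operatorname{dom}\alpha\subseteq\operatorname{dom}\varepsilon \mbox{ and } \operatorname{ran}\alpha\subseteq\operatorname{ran}\varepsilon\}$ (using $\varepsilon\cdot\mathscr{I}^{\infty}_\lambda\cdot\varepsilon= \varepsilon\cdot\mathscr{I}^{\infty}_\lambda\cap\mathscr{I}^{\infty}_\lambda\cdot\varepsilon$) and then appeals to Proposition~\ref{proposition-2.3}. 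Your explicit book-keeping about the identification with $\mathscr{I}^{\infty}_E$ simply spells out what the paper leaves implicit.
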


\begin{proof}
Since
\begin{align*}
    \mathscr{I}^{\infty}_\lambda(\varepsilon)=&\,
    \varepsilon\cdot\mathscr{I}^{\infty}_\lambda\cdot\varepsilon=
    \varepsilon\cdot\mathscr{I}^{\infty}_\lambda\cap
    \mathscr{I}^{\infty}_\lambda\cdot\varepsilon=\\
    =&\,\{\alpha\in\mathscr{I}^{\infty}_\lambda\mid
    \operatorname{dom}\alpha\subseteq\operatorname{dom}\varepsilon\}\cap
    \{\alpha\in\mathscr{I}^{\infty}_\lambda\mid
    \operatorname{ran}\alpha\subseteq\operatorname{ran}\varepsilon\}=\\
    =&\,\{\alpha\in\mathscr{I}^{\infty}_\lambda\mid
    \operatorname{dom}\alpha\subseteq\operatorname{dom}\varepsilon
    \mbox{ and }
    \operatorname{ran}\alpha\subseteq\operatorname{ran}\varepsilon\},
\end{align*}
Proposition~\ref{proposition-2.3} implies the assertion of the
proposition.
\end{proof}

\begin{proposition}\label{proposition-2.5}
For every $\alpha,\beta\in\mathscr{I}^{\infty}_\lambda$, both sets
 $
\{\chi\in\mathscr{I}^{\infty}_\lambda\mid \alpha\cdot\chi=\beta\}
 $
 and
 $
\{\chi\in\mathscr{I}^{\infty}_\lambda\mid \chi\cdot\alpha=\beta\}
 $
are finite. Consequently, every right translation and every left
translation by an element of the semigroup
$\mathscr{I}^{\infty}_\lambda$ is a finite-to-one map.
\end{proposition}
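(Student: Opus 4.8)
The plan is to exploit the fact that the equation $\alpha\cdot\chi=\beta$ forces $\chi$ to coincide with the identity map on a cofinite subset of $\lambda$, so that only finitely many choices remain.

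First I would fix cofinite subsets $A$ and $B$ of $\lambda$ on which $\alpha$ and $\beta$, respectively, restrict to the identity (these exist by the very definition of $\mathscr{I}^{\infty}_\lambda$), and put $C=A\cap B$; then $C$ is cofinite, so $F:=\lambda\setminus C$ is finite. The key step is to observe that if $\chi\in\mathscr{I}^{\infty}_\lambda$ satisfies $\alpha\cdot\chi=\beta$, then $\chi|_C$ is the identity map on $C$. Indeed, for $x\in C$ we have $x\in B\subseteq\operatorname{dom}\beta=\operatorname{dom}(\alpha\cdot\chi)$, and since $x\in A$ gives $(x)\alpha=x$, the description of composition in $\mathscr{I}_\lambda$ yields $(x)\alpha\in\operatorname{dom}\chi$, i.e.\ $x\in\operatorname{dom}\chi$, whence $(x)\chi=((x)\alpha)\chi=(x)(\alpha\cdot\chi)=(x)\beta=x$. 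Here it is essential that we use the \emph{equality} $\operatorname{dom}(\alpha\cdot\chi)=\operatorname{dom}\beta$, not merely an inclusion.

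Next, since $\chi$ is injective and restricts to the identity on $C$, it maps $C$ bijectively onto $C$; injectivity then forces $(y)\chi\in F$ for every $y\in\operatorname{dom}\chi\setminus C$, which is a subset of the finite set $F$. Consequently $\chi$ is completely determined by the identity map on $C$ together with a partial injective map of $F$ into itself, and as $F$ is finite there are only finitely many such; hence there are only finitely many $\chi\in\mathscr{I}^{\infty}_\lambda$ with $\alpha\cdot\chi=\beta$, which settles the first assertion. For the set $\{\chi\in\mathscr{I}^{\infty}_\lambda\mid\chi\cdot\alpha=\beta\}$ I would pass to inverses: since $\mathscr{I}^{\infty}_\lambda$ is an inverse semigroup, $\chi\cdot\alpha=\beta$ holds if and only if $\alpha^{-1}\cdot\chi^{-1}=\beta^{-1}$, so the inversion map — a bijection of $\mathscr{I}^{\infty}_\lambda$ onto itself — carries $\{\chi\mid\chi\cdot\alpha=\beta\}$ bijectively onto $\{\psi\mid\alpha^{-1}\cdot\psi=\beta^{-1}\}$, which is finite by the case already proved. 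Finally, the statement about translations is immediate: the preimage of $\beta$ under the right (resp.\ left) translation by $\alpha$ is precisely $\{\chi\mid\chi\cdot\alpha=\beta\}$ (resp.\ $\{\chi\mid\alpha\cdot\chi=\beta\}$), which has just been shown to be finite.

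I expect the main obstacle to be purely a matter of careful bookkeeping with domains and ranges of partial maps — in particular, making sure the exact equality of domains is invoked so that $C\subseteq\operatorname{dom}\chi$, and that the undetermined part of $\chi$ is genuinely confined to the finite set $F$; once that is in place the finiteness count is routine.
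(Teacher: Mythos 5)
Your proof is correct and follows essentially the same idea as the paper's: the equation pins $\chi$ down on a cofinite subset of $\lambda$, leaving as the only freedom a partial injective map of the finite complement into itself, hence finitely many solutions. The only minor difference is that the paper premultiplies by $\alpha^{-1}$ and observes that any solution of $\alpha\cdot\chi=\beta$ must agree with $\alpha^{-1}\cdot\beta$ on $\operatorname{dom}(\alpha^{-1}\cdot\alpha)$, whereas you force $\chi$ to be the identity on $A\cap B$ directly from the equality $\operatorname{dom}(\alpha\cdot\chi)=\operatorname{dom}\beta$, and you handle the set $\{\chi\mid\chi\cdot\alpha=\beta\}$ explicitly via inversion, a case the paper leaves implicit.
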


\begin{proof}
We denote $S=\{\chi\in\mathscr{I}^{\infty}_\lambda\mid
\alpha\cdot\chi=\beta\}$ and
$T=\{\chi\in\mathscr{I}^{\infty}_\lambda \mid
\alpha^{-1}\cdot\alpha\cdot\chi=\alpha^{-1}\cdot\beta\}$. Then
$S\subseteq T$ and the restriction of any partial map $\chi\in T$ to
$\operatorname{dom}(\alpha^{-1}\cdot\alpha)$ coincides with the
partial map $\alpha^{-1}\cdot\beta$. Since every partial map from
the semigroup $\mathscr{I}^{\infty}_\lambda$ is an injective partial
selfmap almost everywhere the identity we have that there exist
maximal subsets $A_{\alpha^{-1}\alpha}$ and $A_{\alpha^{-1}\beta}$
in $\lambda$ such that the sets $\lambda\setminus
A_{\alpha^{-1}\alpha}$ and $\lambda\setminus A_{\alpha^{-1}\beta}$
are finite and the restrictions
$(\alpha^{-1}\cdot\alpha)|_{A_{\alpha^{-1}\alpha}}\colon
A_{\alpha^{-1}\alpha}\rightarrow A_{\alpha^{-1}\alpha}$ and
$(\alpha^{-1}\cdot\beta)|_{A_{\alpha^{-1}\beta}}\colon
A_{\alpha^{-1}\beta}\rightarrow A_{\alpha^{-1}\beta}$ are identity
maps. We put $A=A_{\alpha^{-1}\alpha}\cap A_{\alpha^{-1}\beta}$.
Then the definition of the semigroup $\mathscr{I}^{\infty}_\lambda$
implies that the restrictions $(\alpha^{-1}\cdot\alpha)|_{A}\colon
A\rightarrow A$ and $(\alpha^{-1}\cdot\beta)|_{A}\colon A\rightarrow
A$ are identity maps and the set $\lambda\setminus A$ is finite.
This implies that the set $T$ is finite and hence the set $S$ is
finite too.
\end{proof}

For an arbitrary non-zero cardinal $\lambda$ we denote by
$\emph{\textsf{S}}_\infty(\lambda)$ the group of all bijective
transformations of $\lambda$ with finite supports (i.e.,
$\alpha\in\emph{\textsf{S}}_\infty(\lambda)$ if and only if the set
$\{x\in \lambda\mid(x)\alpha\neq x\}$ is finite).

The definition of the semigroup $\mathscr{I}^{\infty}_\lambda$ and
Proposition~\ref{proposition-2.4} imply the following proposition:

\begin{proposition}\label{proposition-2.6}
Every maximal subgroup of the semigroup
$\mathscr{I}^{\infty}_\lambda$ is isomorphic to
$\textsf{S}_\infty(\lambda)$.
\end{proposition}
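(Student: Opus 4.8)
The plan is to reduce the statement to the identification of the group of units of $\mathscr{I}^{\infty}_\lambda$. Recall that in an arbitrary semigroup the maximal subgroups are exactly the $\mathscr{H}$-classes $H_e$ of its idempotents $e$ (see~\cite{CP}); hence it suffices to show that for every $\varepsilon\in E(\mathscr{I}^{\infty}_\lambda)$ the $\mathscr{H}$-class $H_\varepsilon$ of $\varepsilon$ in $\mathscr{I}^{\infty}_\lambda$ is isomorphic to $\textsf{S}_\infty(\lambda)$.

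First I would use the standard description of $H_\varepsilon$ as the group of units of the local submonoid $\mathscr{I}^{\infty}_\lambda(\varepsilon)=\varepsilon\cdot\mathscr{I}^{\infty}_\lambda\cdot\varepsilon$: the inclusion $H_\varepsilon\subseteq\mathscr{I}^{\infty}_\lambda(\varepsilon)$ holds because $\alpha\mathscr{H}\varepsilon$ forces $\varepsilon\alpha=\alpha\varepsilon=\alpha$, and inside the monoid $\mathscr{I}^{\infty}_\lambda(\varepsilon)$, whose identity is $\varepsilon$, the $\mathscr{H}$-class of $\varepsilon$ is precisely the set of units. Proposition~\ref{proposition-2.4} supplies an isomorphism $\mathscr{I}^{\infty}_\lambda(\varepsilon)\cong\mathscr{I}^{\infty}_\lambda$, and tracing through the constructions in Propositions~\ref{proposition-2.3} and~\ref{proposition-2.4} one sees that it carries the identity $\varepsilon$ of $\mathscr{I}^{\infty}_\lambda(\varepsilon)$ to the identity $\mathbb{I}$ of $\mathscr{I}^{\infty}_\lambda$. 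Since every isomorphism of monoids restricts to an isomorphism of their groups of units, this gives $H_\varepsilon\cong H_{\mathbb{I}}$, the group of units of $\mathscr{I}^{\infty}_\lambda$.

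It remains to identify $H_{\mathbb{I}}$. By Proposition~\ref{proposition-2.2}$(vii)$ we have $\alpha\mathscr{H}\mathbb{I}$ if and only if $\operatorname{dom}\alpha=\operatorname{dom}\mathbb{I}=\lambda$ and $\operatorname{ran}\alpha=\operatorname{ran}\mathbb{I}=\lambda$, i.e. if and only if $\alpha$ is a total bijective transformation of $\lambda$ that equals the identity off a finite subset of $\lambda$; these are exactly the elements of $\textsf{S}_\infty(\lambda)$. As the multiplication of $\mathscr{I}^{\infty}_\lambda$ restricted to $H_{\mathbb{I}}$ is ordinary composition of bijections, $H_{\mathbb{I}}=\textsf{S}_\infty(\lambda)$ as semigroups, which completes the argument.

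The proof is essentially bookkeeping and I do not expect a genuine obstacle; the only point requiring a little care is verifying that the isomorphism of Proposition~\ref{proposition-2.4} sends $\varepsilon$ to $\mathbb{I}$ (equivalently, that the $\mathscr{H}$-class of $\varepsilon$ in the whole semigroup agrees with the unit group of $\varepsilon\cdot\mathscr{I}^{\infty}_\lambda\cdot\varepsilon$). If one prefers to bypass Proposition~\ref{proposition-2.4}, one can argue directly: by Proposition~\ref{proposition-2.2}$(i)$,$(vii)$ the class $H_\varepsilon$ consists of all $\alpha\in\mathscr{I}^{\infty}_\lambda$ with $\operatorname{dom}\alpha=\operatorname{ran}\alpha=\operatorname{dom}\varepsilon$, i.e. of all finitely supported bijections of the cofinite set $\operatorname{dom}\varepsilon$, and since $|\operatorname{dom}\varepsilon|=\lambda$ any bijection $\operatorname{dom}\varepsilon\to\lambda$ induces an isomorphism $H_\varepsilon\cong\textsf{S}_\infty(\lambda)$.
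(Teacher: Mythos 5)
Your argument is correct and follows essentially the same route as the paper, which simply asserts that the proposition follows from the definition of $\mathscr{I}^{\infty}_\lambda$ together with Proposition~\ref{proposition-2.4}; you merely fill in the bookkeeping the paper leaves implicit (maximal subgroups are $\mathscr{H}$-classes of idempotents, $H_\varepsilon$ is the unit group of $\varepsilon\cdot\mathscr{I}^{\infty}_\lambda\cdot\varepsilon$, and the unit group of $\mathscr{I}^{\infty}_\lambda$ is $\textsf{S}_\infty(\lambda)$). Your closing direct identification of $H_\varepsilon$ via Proposition~\ref{proposition-2.2}$(i)$,$(vii)$ is a valid shortcut but not a genuinely different method.
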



\section{On congruences on the semigroup
$\mathscr{I}^{\infty}_\lambda$}

If $\mathfrak{R}$ is an arbitrary congruence on a semigroup $S$,
then we denote by $\Phi_\mathfrak{R}\colon S\rightarrow
S/\mathfrak{R}$ the natural homomorphisms from $S$ onto
$S/\mathfrak{R}$. Also we denote by $\Omega_S$ and $\Delta_S$ the
\emph{universal} and the \emph{identity} congruences, respectively,
on the semigroup $S$, i.~e., $\Omega(S)=S\times S$ and
$\Delta(S)=\{(s,s)\mid s\in S\}$.

The following lemma follows from the definition of a congruence on a
semilattice:

\begin{lemma}\label{lemma-3.1}
Let $\mathfrak{R}$ is an arbitrary congruence on a semilattice $E$.
Let $a$ and $b$ be elements of the semilattice $E$ such that
$a\mathfrak{R}b$. Then
\begin{itemize}
  \item[$(i)$] $a\mathfrak{R}(ab)$; \; and

  \item[$(ii)$] if $a\leqslant b$ then $a\mathfrak{R}c$ for all $c\in E$
   such that $a\leqslant c\leqslant b$.
\end{itemize}
\end{lemma}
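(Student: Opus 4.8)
The statement to prove is Lemma~\ref{lemma-3.1}, a short structural fact about congruences on a semilattice. Here is how I would approach it.

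\textbf{Part $(i)$.} The key observation is that in a semilattice every element is idempotent, so $aa = a$ and $bb = b$. Since $\mathfrak{R}$ is a congruence, from $a\mathfrak{R}b$ I may multiply on the left by $a$ (using compatibility of $\mathfrak{R}$ with the operation) to obtain $aa\mathfrak{R}ab$, i.e.\ $a\mathfrak{R}(ab)$. That is the whole argument; it uses only idempotency and left-compatibility of the congruence.

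\textbf{Part $(ii)$.} Now suppose $a\leqslant b$, which by the definition of the natural order on a semilattice means $ab = ba = a$. Let $c\in E$ with $a\leqslant c\leqslant b$; then $ac = a$ and $cb = c$. I want to show $a\mathfrak{R}c$. Starting from $a\mathfrak{R}b$, multiply both sides on the right by $c$ (compatibility again): this gives $ac\mathfrak{R}bc$. Since $a\leqslant c$ we have $ac = a$, and since $c\leqslant b$ and the operation is commutative we have $bc = cb = c$. Hence $a\mathfrak{R}c$, as required. (One could also phrase part $(i)$ as the special case $c = ab$ of a version of this, but it is cleanest to prove $(i)$ first and then use $c\leqslant b$ directly.)

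\textbf{Main obstacle.} There is essentially no obstacle here: the lemma is a routine consequence of idempotency, commutativity, and compatibility of congruences, and the only care needed is to keep straight which side one multiplies on and to invoke the correct characterization of $\leqslant$ (namely $a\leqslant b \iff ab = ba = a$, as recalled in the preliminaries). The slight subtlety, if any, is simply remembering that $a\leqslant c\leqslant b$ unpacks to the two equations $ac = a$ and $cb = c$ that make the one-line congruence manipulations go through.
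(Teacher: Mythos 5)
Your proof is correct, and it is exactly the routine verification the paper has in mind: the paper states this lemma without proof, remarking only that it "follows from the definition of a congruence on a semilattice," and your two one-line computations (multiplying $a\mathfrak{R}b$ by $a$ for $(i)$ and by $c$ for $(ii)$, then using idempotency, commutativity, and the characterization $a\leqslant b\iff ab=a$) supply precisely that argument.
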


\begin{proposition}\label{proposition-3.2}
Let $\mathfrak{R}$ be an arbitrary congruence on the semigroup
$\mathscr{I}^{\infty}_\lambda$. Let $\varepsilon$ and $\varphi$ be
idempotents of $\mathscr{I}^{\infty}_\lambda$ such that
$\varepsilon\mathfrak{R}\varphi$ and $\varepsilon\leqslant\varphi$.
If
$|\operatorname{dom}\varphi\setminus\operatorname{dom}\varepsilon|=1$
then the following conditions hold:
\begin{itemize}
  \item[$(i)$] $\varphi\mathfrak{R}\iota$ for all idempotents
   $\iota\in{\downarrow}\varphi$; \; and
  \item[$(ii)$] $\varphi\mathfrak{R}\chi$ for all idempotents
   $\chi\in\mathscr{I}^{\infty}_\lambda$ such that
   $|\lambda\setminus\operatorname{dom}\varphi|=
   |\lambda\setminus\operatorname{dom}\chi|$.
\end{itemize}
\end{proposition}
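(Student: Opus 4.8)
The plan is to exploit the fact that an idempotent of $\mathscr{I}^{\infty}_\lambda$ is determined by its domain, so that via Proposition~\ref{proposition-2.2}$(iii)$ the band $E(\mathscr{I}^{\infty}_\lambda)$ is identified with $(\mathscr{P}_{<\omega}(\lambda),\cup)$, an idempotent $\varepsilon$ corresponding to the finite set $\lambda\setminus\operatorname{dom}\varepsilon$. The hypothesis $|\operatorname{dom}\varphi\setminus\operatorname{dom}\varepsilon|=1$ says that, under this identification, $\varphi$ corresponds to a set $F$ and $\varepsilon$ to $F\cup\{a\}$ for some $a\in\lambda\setminus F$. So we are told that $\mathfrak{R}$ collapses $F$ with $F\cup\{a\}$, and we must propagate this.

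First I would establish $(i)$. Let $\iota\in{\downarrow}\varphi$ be an idempotent, so $\operatorname{dom}\iota\subseteq\operatorname{dom}\varphi$; equivalently $\iota$ corresponds to a finite set $G\supseteq F$. The key idea is translation: for a suitable permutation-type element $\gamma\in\mathscr{I}^{\infty}_\lambda$ and the natural action $\chi\mapsto\gamma^{-1}\chi\gamma$ on idempotents (which on the level of domains is $\operatorname{dom}\gamma^{-1}\chi\gamma=(\operatorname{dom}\chi)\gamma$ intersected appropriately, i.e.\ it permutes the finite "defect set"), one can move the single point $a$ to any chosen point of $\lambda\setminus F$ while fixing $F$ pointwise. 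Since $\mathfrak{R}$ is a congruence, $\varepsilon\mathfrak{R}\varphi$ implies $\gamma^{-1}\varepsilon\gamma\,\mathfrak{R}\,\gamma^{-1}\varphi\gamma$; choosing $\gamma$ so that $\gamma^{-1}\varphi\gamma=\varphi$ and $\gamma^{-1}\varepsilon\gamma$ is the idempotent with defect set $F\cup\{b\}$ for any prescribed $b\notin F$, we learn that $\varphi\mathfrak{R}\varphi_{F\cup\{b\}}$ for every such $b$. Multiplying several of these relations together (using that $\mathfrak{R}$ respects the semilattice product, which is union of defect sets) and invoking Lemma~\ref{lemma-3.1}, we get $\varphi\mathfrak{R}\varphi_{G}$ for every finite $G\supseteq F$; that is exactly $\varphi\mathfrak{R}\iota$ for all $\iota\in{\downarrow}\varphi$. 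Concretely: write $G\setminus F=\{b_1,\dots,b_k\}$; then $\varphi_{F}\mathfrak{R}\varphi_{F\cup\{b_1\}}$, and multiplying by $\varphi_{F\cup\{b_2\}}$ gives $\varphi_{F\cup\{b_2\}}\,\mathfrak{R}\,\varphi_{F\cup\{b_1,b_2\}}$; combined with $\varphi_F\mathfrak{R}\varphi_{F\cup\{b_2\}}$ we get $\varphi_F\mathfrak{R}\varphi_{F\cup\{b_1,b_2\}}$, and an easy induction finishes it.

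Then $(ii)$ follows quickly from $(i)$ together with Proposition~\ref{proposition-2.2}$(viii)$ (or $(xi)$): if $\chi$ is an idempotent with $|\lambda\setminus\operatorname{dom}\chi|=|\lambda\setminus\operatorname{dom}\varphi|=:m$, pick an idempotent $\iota$ below both $\varphi$ and $\chi$ — for instance $\iota=\varphi\chi$, whose defect set is the union of the two defect sets and hence finite — so $\iota\in{\downarrow}\varphi$ and $\iota\in{\downarrow}\chi$. By $(i)$ applied to $\varphi$ we get $\varphi\mathfrak{R}\iota$. To conclude $\chi\mathfrak{R}\iota$ we need part $(i)$ applied with $\chi$ in place of $\varphi$; this is legitimate once we know the hypothesis of the Proposition transfers, i.e.\ that there is an idempotent $\varepsilon'\leqslant\chi$ with $|\operatorname{dom}\chi\setminus\operatorname{dom}\varepsilon'|=1$ and $\varepsilon'\mathfrak{R}\chi$. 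Such an $\varepsilon'$ is produced by the same conjugation trick: choose $\delta\in\mathscr{I}^{\infty}_\lambda$ with $\delta^{-1}\varphi\delta=\chi$ (possible since $\varphi\mathscr{D}\chi$ by Proposition~\ref{proposition-2.2}$(viii)$, and a $\mathscr{D}$-equivalence between idempotents is implemented by conjugation in an inverse semigroup), and set $\varepsilon'=\delta^{-1}\varepsilon\delta$; then $\varepsilon'\leqslant\chi$, $|\operatorname{dom}\chi\setminus\operatorname{dom}\varepsilon'|=1$, and $\varepsilon\mathfrak{R}\varphi$ gives $\varepsilon'\mathfrak{R}\chi$. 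Now $(i)$ for $\chi$ yields $\chi\mathfrak{R}\iota$, and transitivity gives $\varphi\mathfrak{R}\chi$.

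The main obstacle is the bookkeeping in the conjugation/translation argument of step $(i)$: one must check carefully that conjugating an idempotent $\varepsilon$ with defect set $D$ by a finitely-supported bijection $\gamma$ yields the idempotent with defect set $(D)\gamma$, and that $\gamma$ can be chosen both to fix the defect set $F$ of $\varphi$ setwise (indeed pointwise, so $\gamma^{-1}\varphi\gamma=\varphi$) and to send the extra point $a$ to an arbitrary point outside $F$. This is a routine but slightly fiddly verification using the explicit matrix-style notation for elements of $\mathscr{I}^{\infty}_\lambda$; once it is in place, everything else is a short induction plus an application of Lemma~\ref{lemma-3.1}.
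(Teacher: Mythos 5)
Your argument is correct, and it reaches the conclusion by a genuinely different (and somewhat cleaner) bookkeeping than the paper's. The paper proves $(i)$ by descending a maximal $\omega$-chain $\varphi>\varepsilon=\varepsilon_1>\dots>\varepsilon_n=\psi$ with partial-map conjugators $\alpha_i$ (identity on $\operatorname{dom}\varepsilon_{i+1}$, sending the deleted point one step down), which shift the relation one level at a time: $\alpha_i^{-1}\varepsilon_{i-1}\alpha_i=\varepsilon_i$ and $\alpha_i^{-1}\varepsilon_i\alpha_i=\varepsilon_{i+1}$; this gives $\varphi\mathfrak{R}\psi$ for all $\psi\leqslant\varepsilon$, and arbitrary $\iota\leqslant\varphi$ is then handled via $\iota_0=\varepsilon\cdot\iota$ and Lemma~\ref{lemma-3.1}. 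For $(ii)$ the paper conjugates $\varphi$ directly onto $\chi$ by a partial bijection which is the identity on $\operatorname{dom}(\varepsilon\cdot\chi)$, using $(\varepsilon\cdot\chi)\mathfrak{R}\varphi$ from $(i)$. You instead conjugate only by units (finitely supported permutations, e.g.\ transpositions), first spreading the one-point relation to every one-point enlargement of the defect set of $\varphi$, and then obtain arbitrary finite enlargements by multiplying $\mathfrak{R}$-related idempotents and using transitivity; for $(ii)$ you transfer the hypothesis pair $(\varepsilon,\varphi)$ to a pair $(\varepsilon',\chi)$ by unit conjugation, re-apply $(i)$ to $\chi$, and meet at $\varphi\cdot\chi$. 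Both proofs rest on the same mechanism — a congruence is stable under conjugation, and the band is $(\mathscr{P}_{<\omega}(\lambda),\cup)$ with product equal to union of defect sets — but your route avoids maximal chains entirely, makes the domain bookkeeping trivial (the conjugators are group elements, so conjugation simply permutes defect sets), and in fact does not even need Lemma~\ref{lemma-3.1} in part $(i)$, despite your citing it: the product-plus-transitivity induction already suffices. The verifications you flag as the main obstacle (that conjugating by a finitely supported bijection $\gamma$ sends the idempotent with defect set $D$ to the one with defect set $(D)\gamma$, and that $\gamma$ can fix the defect set of $\varphi$ pointwise while moving the extra point anywhere outside it, and likewise the existence of a unit $\delta$ carrying the defect set of $\varphi$ onto that of $\chi$) are indeed routine and correct as stated, so the proposal stands as a complete alternative proof.
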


\begin{proof}
$(i)$ First we shall show that $\varphi\mathfrak{R}\psi$ for all
idempotents  $\psi\in{\downarrow}\varepsilon$. By
Proposition~\ref{proposition-2.2}~$(iv)$ there exists a maximal (not
necessary unique) $\omega$-chain $L$ in
$E(\mathscr{I}^{\infty}_\lambda)$ which contains $\varepsilon$ and
$\psi$. Let $L_0=\{\varepsilon_1,\ldots,\varepsilon_n\}$ be a
maximal subchain in $L$ such that
$\psi=\varepsilon_n<\ldots<\varepsilon_1=\varepsilon$, where $n$ is
some positive integer. The existence of the subchain $L$ follows
from Proposition~\ref{proposition-2.2}~$(iv)$ too. Let
\begin{equation*}
    x_n=\operatorname{dom}\varepsilon_{n-1}\setminus\operatorname{dom}\varepsilon_{n},\,
    x_{n-1}=\operatorname{dom}\varepsilon_{n-2}\setminus\operatorname{dom}\varepsilon_{n-1},\,
    \ldots, \,
    x_2=\operatorname{dom}\varepsilon_{1}\setminus\operatorname{dom}\varepsilon_{2},\,
    x_1=\operatorname{dom}\varphi\setminus\operatorname{dom}\varepsilon_{1}.
\end{equation*}
We put
\begin{equation*}
    \alpha_1=
\left(\left.
\begin{array}{c}
  x_1 \\
  x_2
\end{array}
\right| \operatorname{dom}\varepsilon_{2}\right), \quad
    \alpha_2=
\left(\left.
\begin{array}{c}
  x_2 \\
  x_3
\end{array}
\right| \operatorname{dom}\varepsilon_{3}\right), \quad
 \ldots \,, \quad
  \alpha_{n-1}= \, \left(\left.
\begin{array}{c}
  x_{n-1} \\
  x_{n}
\end{array}
\right| \operatorname{dom}\varepsilon_{n}\right).
\end{equation*}
Then we have that
\begin{equation*}
\begin{array}{ccc}
    \alpha^{-1}_1\cdot\varphi\cdot\alpha_1=\varepsilon_1 &
    \mbox{and} &
    \alpha^{-1}_1\cdot\varepsilon_1\cdot\alpha_1=\varepsilon_2;\\
    \alpha^{-1}_2\cdot\varepsilon_1\cdot\alpha_2=\varepsilon_2 &
    \mbox{and} &
    \alpha^{-1}_2\cdot\varepsilon_2\cdot\alpha_2=\varepsilon_3;\\
     \cdots  & \cdots & \cdots \\
    \alpha^{-1}_{n-1}\cdot\varepsilon_{n-2}\cdot\alpha_{n-1}=\varepsilon_{n-1} &
    \mbox{and}
    &\,\alpha^{-1}_{n-1}\cdot\varepsilon_{n-1}\cdot\alpha_{n-1}=\varepsilon_{n},
\end{array}
\end{equation*}
and hence
 $
    \varepsilon_1\mathfrak{R}\varepsilon_2,
    \varepsilon_2\mathfrak{R}\varepsilon_3, \ldots ,
    \varepsilon_{n-1}\mathfrak{R}\varepsilon_n
 $.
Since $\varphi\mathfrak{R}\varepsilon$ we have that
$\varphi\mathfrak{R}\varepsilon_n$. This completes the proof of the
statement.

Let $\iota$ be an arbitrary idempotent of the semigroup
$\mathscr{I}^{\infty}_\lambda$ such that
$\iota\in{\downarrow}\varphi$. We put
$\iota_0=\varepsilon\cdot\iota$. Then by previous part of the proof
we have that $\iota_0\mathfrak{R}\varphi$ and hence by
Lemma~\ref{lemma-3.1} we get $\iota\mathfrak{R}\varphi$.

$(ii)$ Let $\chi$ be an arbitrary idempotent of the semigroup
$\mathscr{I}^{\infty}_\lambda$ such that $\varphi\neq\chi$ and
$|\lambda\setminus\operatorname{dom}\varphi|=
|\lambda\setminus\operatorname{dom}\chi|$. Then
$\varepsilon\cdot\chi\leqslant\varphi$ and hence by statement $(i)$
we get that $(\varepsilon\cdot\chi)\mathfrak{R}\varphi$. Since
$|\lambda\setminus\operatorname{dom}\varphi|=
|\lambda\setminus\operatorname{dom}\chi|$ we conclude that
$|\operatorname{dom}\varphi\setminus
\operatorname{dom}(\varepsilon\cdot\chi)|=
|\operatorname{dom}\chi\setminus
\operatorname{dom}(\varepsilon\cdot\chi)|$. Let be
$\{x_1,\ldots,x_k\}=\operatorname{dom}\varphi\setminus
\operatorname{dom}(\varepsilon\cdot\chi)$ and
$\{y_1,\ldots,y_k\}=\operatorname{dom}\chi\setminus
\operatorname{dom}(\varepsilon\cdot\chi)$. We put
\begin{equation*}
     \alpha= \,
\left(\left.
\begin{array}{ccc}
  x_1 & \cdots & x_k \\
  y_1 & \cdots & y_k
\end{array}
\right| \operatorname{dom}(\varepsilon\cdot\chi)\right).
\end{equation*}
Then $\alpha^{-1}\cdot\varphi\cdot\alpha=\chi$ and
$\alpha^{-1}\cdot(\varepsilon\cdot\chi)\cdot\alpha=
\varepsilon\cdot\chi$. Therefore we get that
$(\varepsilon\cdot\chi)\mathfrak{R}\chi$ and hence
$\varphi\mathfrak{R}\chi$. This completes the proof of our
statement.
\end{proof}

\begin{theorem}\label{theorem-3.4}
Let $\mathfrak{R}$ be an arbitrary congruence on the semigroup
$\mathscr{I}^{\infty}_\lambda$ and $\varepsilon$ and $\varphi$ be
distinct $\mathfrak{R}$-equivalent idempotents of
$\mathscr{I}^{\infty}_\lambda$. Then $\alpha\mathfrak{R}\varepsilon$
for every $\alpha\in\mathscr{I}^{\infty}_\lambda$ such that
\begin{equation*}
|\lambda\setminus\operatorname{dom}\alpha|\geqslant
\min\left\{|\lambda\setminus\operatorname{dom}\varphi|,
|\lambda\setminus\operatorname{dom}\varepsilon|\right\}.
\end{equation*}
\end{theorem}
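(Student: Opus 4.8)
The plan is to reduce, in a few stages, to the situation handled by Proposition~\ref{proposition-3.2}, and then bootstrap from idempotents to arbitrary elements. Write $d(\gamma)=|\lambda\setminus\operatorname{dom}\gamma|$ for $\gamma\in\mathscr{I}^{\infty}_\lambda$; this is always a non-negative integer, and for idempotents one has $\iota\leqslant\kappa$ iff $\operatorname{dom}\iota\subseteq\operatorname{dom}\kappa$, which then forces $d(\iota)\geqslant d(\kappa)$. Since $\varepsilon\mathfrak{R}\varphi$, the conclusion ``$\alpha\mathfrak{R}\varepsilon$'' is equivalent to ``$\alpha\mathfrak{R}\varphi$'', and the hypothesis on $\alpha$ is symmetric in $\varepsilon,\varphi$; so I may assume $d(\varepsilon)\leqslant d(\varphi)$ and put $m=d(\varepsilon)=\min\{d(\varepsilon),d(\varphi)\}$. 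The goal is then: $\alpha\mathfrak{R}\varepsilon$ for every $\alpha\in\mathscr{I}^{\infty}_\lambda$ with $d(\alpha)\geqslant m$.

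First I would record an auxiliary fact: \emph{if $\nu_1,\nu_2\in E(\mathscr{I}^{\infty}_\lambda)$ are distinct, $\nu_1\mathfrak{R}\nu_2$, and $d(\nu_2)\leqslant d(\nu_1)$, then $\nu_2\mathfrak{R}\iota$ for every idempotent $\iota\leqslant\nu_2$ and $\nu_2\mathfrak{R}\chi$ for every idempotent $\chi$ with $d(\chi)=d(\nu_2)$.} Indeed, since idempotents commute and $\mathfrak{R}$ is a congruence, $\nu_1\nu_2\mathfrak{R}\nu_2\nu_2=\nu_2$; moreover $\nu_1\nu_2\ne\nu_2$, because $\nu_1\nu_2=\nu_2$ would give $\operatorname{dom}\nu_2\subseteq\operatorname{dom}\nu_1$, hence $d(\nu_2)\geqslant d(\nu_1)$, hence $\nu_1=\nu_2$. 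Thus $\operatorname{dom}(\nu_1\nu_2)\subsetneq\operatorname{dom}\nu_2$; picking $z\in\operatorname{dom}\nu_2\setminus\operatorname{dom}(\nu_1\nu_2)$ and letting $\nu_2^{-}$ be the identity map on the cofinite set $\operatorname{dom}\nu_2\setminus\{z\}$, Lemma~\ref{lemma-3.1}$(ii)$ (applied to the restriction of $\mathfrak{R}$ to the semilattice $E(\mathscr{I}^{\infty}_\lambda)$) yields $\nu_1\nu_2\mathfrak{R}\nu_2^{-}$, so $\nu_2^{-}\mathfrak{R}\nu_2$ with $|\operatorname{dom}\nu_2\setminus\operatorname{dom}\nu_2^{-}|=1$, and Proposition~\ref{proposition-3.2}, applied with $\nu_2^{-},\nu_2$ in the roles of $\varepsilon,\varphi$, gives exactly the two asserted relations.

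Applying this to $\nu_1=\varepsilon\varphi$, $\nu_2=\varepsilon$ is legitimate: $\varepsilon\varphi\mathfrak{R}\varepsilon$, $d(\varepsilon)\leqslant d(\varepsilon\varphi)$, and $\varepsilon\varphi\ne\varepsilon$ since $\varepsilon\varphi=\varepsilon$ would mean $\varepsilon\leqslant\varphi$, forcing $d(\varepsilon)\geqslant d(\varphi)$ and hence $\varepsilon=\varphi$. So $\varepsilon\mathfrak{R}\iota$ for all idempotents $\iota\leqslant\varepsilon$ and $\varepsilon\mathfrak{R}\chi$ for all idempotents $\chi$ with $d(\chi)=m$. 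Next I would show every idempotent $\chi$ with $d(\chi)\geqslant m$ satisfies $\chi\mathfrak{R}\varepsilon$: choose $Z\subseteq\lambda\setminus\operatorname{dom}\chi$ with $|Z|=m$ and let $\mu$ be the identity on $\lambda\setminus Z$, so $d(\mu)=m$, $\chi\leqslant\mu$, and $\mu\mathfrak{R}\varepsilon$ by the previous sentence. If $\mu=\varepsilon$ we are done since then $\chi\leqslant\varepsilon$; otherwise $\mu,\varepsilon$ are distinct $\mathfrak{R}$-equivalent idempotents with $d(\mu)=d(\varepsilon)=m$, and applying the auxiliary fact to $\nu_1=\mu\varepsilon$, $\nu_2=\mu$ gives $\mu\mathfrak{R}\iota$ for all idempotents $\iota\leqslant\mu$, in particular $\chi\mathfrak{R}\mu\mathfrak{R}\varepsilon$. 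Hence every idempotent of deficiency $\geqslant m$ lies in the $\mathfrak{R}$-class of $\varepsilon$.

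Finally, for arbitrary $\alpha$ with $d(\alpha)\geqslant m$, set $M=\{x\in\operatorname{dom}\alpha\mid (x)\alpha\ne x\}$ (a finite set) and let $\iota$ be the identity on the cofinite set $\operatorname{dom}\alpha\setminus M$. Then $\iota\alpha=\iota$ (every point of $\operatorname{dom}\alpha\setminus M$ is a fixed point of $\alpha$), whereas $\alpha\alpha^{-1}$ is the identity on $\operatorname{dom}\alpha$ and $\alpha\alpha^{-1}\alpha=\alpha$. Both $\iota$ and $\alpha\alpha^{-1}$ are idempotents of deficiency $\geqslant m$, hence $\iota\mathfrak{R}\alpha\alpha^{-1}$ by the previous step; multiplying on the right by $\alpha$ gives $\iota=\iota\alpha\mathfrak{R}\alpha\alpha^{-1}\alpha=\alpha$, so $\alpha\mathfrak{R}\iota\mathfrak{R}\varepsilon$, as required. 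The main obstacle is the third stage: Proposition~\ref{proposition-3.2} only collapses an idempotent with idempotents beneath it and idempotents of its own deficiency, so to reach an idempotent of strictly larger deficiency that is not below $\varepsilon$ one is forced to relay through a deficiency-$m$ idempotent and invoke the auxiliary fact a second time; keeping the deficiency bookkeeping correct — in particular checking that the products $\varepsilon\varphi$ and $\mu\varepsilon$ are strictly below the relevant idempotents — is the delicate point.
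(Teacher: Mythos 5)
Your proof is correct and follows essentially the same route as the paper's: reduce to the idempotent case by squeezing a pair of distinct $\mathfrak{R}$-equivalent idempotents down to an adjacent pair (via the meet and Lemma~\ref{lemma-3.1}) so that Proposition~\ref{proposition-3.2} applies, and then pass to arbitrary $\alpha$ using $\alpha=\alpha\cdot\alpha^{-1}\cdot\alpha$ together with an idempotent that is absorbed under multiplication by $\alpha$. The only differences are cosmetic: you make explicit the steps the paper compresses into ``follows from Lemma~\ref{lemma-3.1} and Proposition~\ref{proposition-3.2}'' (including the relay through a deficiency-$m$ idempotent above $\chi$), and in the final stage you take the identity on the fixed-point part of $\operatorname{dom}\alpha$ directly, whereas the paper uses the idempotent $\nu=\varsigma_\alpha\cdot\varepsilon\leqslant\varepsilon$ --- both choices serve the same purpose.
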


\begin{proof}
In the case when $\alpha$ is an idempotent of the semigroup
$\mathscr{I}^{\infty}_\lambda$ the statement of the theorem follows
from Lemma~\ref{lemma-3.1} and Proposition~\ref{proposition-3.2}.

Suppose that $\alpha$ is an arbitrary non-idempotent element of the
semigroup $\mathscr{I}^{\infty}_\lambda$ such that
$|\lambda\setminus\operatorname{dom}\alpha|\geqslant
\max\left\{|\lambda\setminus\operatorname{dom}\varphi|,
|\lambda\setminus\operatorname{dom}\varepsilon|\right\}$. Since
$\mathscr{I}^{\infty}_\lambda$ is an inverse semigroup we have that
$\alpha\cdot\alpha^{-1}\cdot\alpha=\alpha$ and
Propositions~\ref{proposition-2.1} and \ref{proposition-2.2} imply
that
\begin{equation*}
    |\lambda\setminus\operatorname{dom}\alpha|=
    |\lambda\setminus\operatorname{dom}\alpha^{-1}|=
    |\lambda\setminus\operatorname{dom}(\alpha\cdot\alpha^{-1})|=
    |\lambda\setminus\operatorname{dom}(\alpha^{-1}\cdot\alpha)|\geqslant
    \min\left\{|\lambda\setminus\operatorname{dom}\varphi|,
    |\lambda\setminus\operatorname{dom}\varepsilon|\right\}.
\end{equation*}
Hence $(\alpha\cdot\alpha^{-1})\mathfrak{R}\varepsilon$ and by
Proposition~\ref{proposition-3.2} we have that
$(\alpha\cdot\alpha^{-1})\mathfrak{R}\iota$ for every idempotent
$\iota$ of the semigroup $\mathscr{I}^{\infty}_\lambda$ such that
$\iota\in{\downarrow}\varepsilon$. Definition of the semigroup
$\mathscr{I}^{\infty}_\lambda$ implies that for every
$\alpha\in\mathscr{I}^{\infty}_\lambda$ there exists an idempotent
$\varsigma_\alpha\in\mathscr{I}^{\infty}_\lambda$ such that
$\alpha\cdot\varsigma=\varsigma\cdot\alpha=\varsigma\cdot(\alpha\cdot\alpha^{-1})=\varsigma$
for all idempotents $\varsigma\in\mathscr{I}^{\infty}_\lambda$ such
that $\varsigma\in{\downarrow}\varsigma_\alpha$. Let
$\nu=\varsigma_\alpha\cdot\varepsilon$. Then
$(\alpha\cdot\alpha^{-1})\mathfrak{R}\nu$ and
$\alpha\cdot\nu=\nu\cdot\alpha=\nu\cdot(\alpha\cdot\alpha^{-1})=\nu$.
Therefore we get
\begin{equation*}
    (\alpha)\Phi_{\mathfrak{R}}=
    (\alpha\cdot\alpha^{-1}\cdot\alpha)\Phi_{\mathfrak{R}}=
    (\alpha\cdot\alpha^{-1})\Phi_{\mathfrak{R}}\cdot(\alpha)\Phi_{\mathfrak{R}}=
    (\nu)\Phi_{\mathfrak{R}}\cdot(\alpha)\Phi_{\mathfrak{R}}=
    (\nu\cdot\alpha)\Phi_{\mathfrak{R}}= (\nu)\Phi_{\mathfrak{R}}
\end{equation*}
and $\alpha\mathfrak{R}\nu$. Hence we have that
$\alpha\mathfrak{R}\varepsilon$.
\end{proof}

\begin{proposition}\label{proposition-3.5}
Let $\mathfrak{R}$ be an arbitrary congruence on the semigroup
$\mathscr{I}^{\infty}_\lambda$. Let $\varepsilon$ be an idempotent
of $\mathscr{I}^{\infty}_\lambda$ such that
$|\lambda\setminus\operatorname{dom}\varepsilon|\geqslant 1$ and the
following conditions hold:
\begin{itemize}
  \item[$(i)$] there exists an idempotent
   $\varphi\in\mathscr{I}^{\infty}_\lambda$ such
   that $\varepsilon\mathfrak{R}\varphi$ and
   $|\lambda\setminus\operatorname{dom}\varphi|\geqslant
   |\lambda\setminus\operatorname{dom}\varepsilon|$; \, and
  \item[$(ii)$] does not exist an idempotent
   $\psi\in\mathscr{I}^{\infty}_\lambda$ such
   that $\varepsilon\mathfrak{R}\psi$ and
   $|\lambda\setminus\operatorname{dom}\psi|<
   |\lambda\setminus\operatorname{dom}\varepsilon|$.
\end{itemize}
Then there exists no element $\alpha$ of the semigroup
$\mathscr{I}^{\infty}_\lambda$ such that
$\varepsilon\mathfrak{R}\alpha$ and
   $|\lambda\setminus\operatorname{dom}\alpha|<
   |\lambda\setminus\operatorname{dom}\varepsilon|$.
\end{proposition}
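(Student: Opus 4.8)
The plan is to argue by contradiction. Suppose there exists $\alpha\in\mathscr{I}^{\infty}_\lambda$ with $\varepsilon\mathfrak{R}\alpha$ and $|\lambda\setminus\operatorname{dom}\alpha|<|\lambda\setminus\operatorname{dom}\varepsilon|$. The first step is to produce, from $\alpha$, an $\mathfrak{R}$-related \emph{idempotent} with too-small domain complement, which will directly contradict hypothesis $(ii)$. Since $\mathscr{I}^{\infty}_\lambda$ is inverse, applying the natural homomorphism $\Phi_{\mathfrak{R}}$ and the standard trick $\alpha\mathscr{H}$-related to $\alpha\alpha^{-1}$ will not immediately work because $\alpha\mathfrak{R}\varepsilon$ does not force $\alpha\alpha^{-1}\mathfrak{R}\varepsilon$; instead I would multiply the relation $\alpha\mathfrak{R}\varepsilon$ on suitable sides.

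First I would consider $\alpha^{-1}\cdot\alpha\cdot\varepsilon$. From $\alpha\mathfrak{R}\varepsilon$ we get, multiplying on the left by $\alpha^{-1}$ and on the right by $\varepsilon$, that $\alpha^{-1}\cdot\alpha\cdot\varepsilon\;\mathfrak{R}\;\alpha^{-1}\cdot\varepsilon\cdot\varepsilon=\alpha^{-1}\cdot\varepsilon$; this is not yet idempotent, so instead the cleaner route is: since $\varepsilon$ is idempotent, $\varepsilon\mathfrak{R}\alpha$ gives $\varepsilon=\varepsilon\cdot\varepsilon\;\mathfrak{R}\;\alpha\cdot\varepsilon$ and also $\varepsilon\;\mathfrak{R}\;\varepsilon\cdot\alpha$, hence $\varepsilon\;\mathfrak{R}\;\varepsilon\cdot\alpha\cdot\varepsilon$. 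Now set $\beta=\varepsilon\cdot\alpha\cdot\varepsilon$; then $\operatorname{dom}\beta\subseteq\operatorname{dom}\varepsilon$ and $\operatorname{ran}\beta\subseteq\operatorname{ran}\varepsilon$, so $\beta\in\mathscr{I}^{\infty}_\lambda(\varepsilon)$, and $\beta\mathfrak{R}\varepsilon$. The key numerical point is that $|\lambda\setminus\operatorname{dom}\beta|\leqslant|\lambda\setminus\operatorname{dom}\varepsilon|$, but more importantly I want an idempotent, so I pass to $\beta\cdot\beta^{-1}=\varepsilon\cdot\alpha\cdot\varepsilon\cdot\alpha^{-1}\cdot\varepsilon$. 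Since $\beta\mathfrak{R}\varepsilon$ implies $\beta^{-1}\mathfrak{R}\varepsilon^{-1}=\varepsilon$ (inversion is compatible with congruences on inverse semigroups, as $\beta^{-1}$ is definable from $\beta$ via the semigroup operation in an inverse semigroup), we get $\beta\cdot\beta^{-1}\;\mathfrak{R}\;\varepsilon\cdot\varepsilon=\varepsilon$. Thus $\iota:=\beta\cdot\beta^{-1}$ is an idempotent $\mathfrak{R}$-related to $\varepsilon$.

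The main obstacle is controlling $|\lambda\setminus\operatorname{dom}\iota|$: I need $|\lambda\setminus\operatorname{dom}\iota|<|\lambda\setminus\operatorname{dom}\varepsilon|$ to contradict $(ii)$, but in general $\operatorname{dom}\iota=\operatorname{dom}\beta\subseteq\operatorname{dom}\varepsilon$ could shrink. To handle this I would instead work with $\alpha^{-1}\cdot\alpha$ directly: by Propositions~\ref{proposition-2.1} and \ref{proposition-2.2}, $|\lambda\setminus\operatorname{dom}(\alpha^{-1}\cdot\alpha)|=|\lambda\setminus\operatorname{dom}\alpha|$, and from $\alpha\mathfrak{R}\varepsilon$ one gets $\alpha^{-1}\cdot\alpha\;\mathfrak{R}\;\alpha^{-1}\cdot\varepsilon$ and then, multiplying by $\varepsilon$ appropriately and using that $\varepsilon$ is idempotent, an idempotent $\mathfrak{R}$-related to $\varepsilon$ whose domain complement has cardinality exactly $\max\{|\lambda\setminus\operatorname{dom}\alpha|,\ldots\}$. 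The careful bookkeeping here is the only delicate part; once an idempotent $\psi$ with $\varepsilon\mathfrak{R}\psi$ and $|\lambda\setminus\operatorname{dom}\psi|<|\lambda\setminus\operatorname{dom}\varepsilon|$ is exhibited, it contradicts hypothesis $(ii)$ outright, completing the proof. I expect the write-up to hinge on a clean computation of $\operatorname{dom}$ and $\operatorname{ran}$ of the various products $\varepsilon\alpha$, $\alpha\varepsilon$, $\alpha^{-1}\alpha$ using Proposition~\ref{proposition-2.1}, together with the observation that a congruence on an inverse semigroup respects inversion.
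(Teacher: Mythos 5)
Your write-up contains the right ingredients but never actually assembles them, and the one claim on which your detour rests is false. You assert at the outset that $\alpha\mathfrak{R}\varepsilon$ ``does not force $\alpha\alpha^{-1}\mathfrak{R}\varepsilon$''; it does, and this is precisely the paper's whole proof. Since $\mathfrak{R}$ is a congruence on an inverse semigroup, it is compatible with inversion (Lemma~III.1.1 of \cite{Petrich1984}, a fact you yourself invoke later for $\beta$), so $\varepsilon\mathfrak{R}\alpha$ gives $\varepsilon=\varepsilon^{-1}\mathfrak{R}\alpha^{-1}$, and multiplying the two relations yields $\varepsilon=\varepsilon\cdot\varepsilon\,\mathfrak{R}\,\alpha\cdot\alpha^{-1}$. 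Since $|\lambda\setminus\operatorname{dom}(\alpha\cdot\alpha^{-1})|=|\lambda\setminus\operatorname{dom}\alpha|<|\lambda\setminus\operatorname{dom}\varepsilon|$, the idempotent $\alpha\cdot\alpha^{-1}$ contradicts hypothesis~$(ii)$ outright, and the proof is finished in three lines.

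The routes you actually pursue do not reach this conclusion. The construction $\beta=\varepsilon\cdot\alpha\cdot\varepsilon$, $\iota=\beta\cdot\beta^{-1}$ cannot work, as you half-acknowledge: every element of $\varepsilon\cdot\mathscr{I}^{\infty}_\lambda\cdot\varepsilon$ has $\operatorname{dom}\subseteq\operatorname{dom}\varepsilon$, hence $|\lambda\setminus\operatorname{dom}\iota|\geqslant|\lambda\setminus\operatorname{dom}\varepsilon|$, so $\iota$ can never violate $(ii)$. The same defect afflicts your closing suggestion to take $\alpha^{-1}\cdot\alpha$ and then ``multiply by $\varepsilon$ appropriately'': any product with $\varepsilon$ as a factor has domain complement of size at least $|\lambda\setminus\operatorname{dom}\varepsilon|$, so the needed strict inequality is destroyed, and the ``careful bookkeeping'' you defer is exactly the step that cannot be done along that route. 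The correct move is to not multiply by $\varepsilon$ at all: pass from $\alpha$ directly to the idempotent $\alpha\cdot\alpha^{-1}$ (or $\alpha^{-1}\cdot\alpha$), whose domain complement has the same cardinality as that of $\alpha$ by Propositions~\ref{proposition-2.1} and~\ref{proposition-2.2}, and which is $\mathfrak{R}$-related to $\varepsilon$ by inversion-compatibility as above. As submitted, the argument is a sketch built around a mistaken obstruction and is not a complete proof, although the repair is immediate.
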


\begin{proof}
Suppose to the contrary that there exists
$\alpha\in\mathscr{I}^{\infty}_\lambda$ such that
$\varepsilon\mathfrak{R}\alpha$ and
$|\lambda\setminus\operatorname{dom}\alpha|<
|\lambda\setminus\operatorname{dom}\varepsilon|$. Since
$\mathscr{I}^{\infty}_\lambda$ is an inverse semigroup
Lemma~III.1.1~\cite{Petrich1984} implies that
$\varepsilon\mathfrak{R}\alpha^{-1}$ and hence
$\varepsilon\mathfrak{R}(\alpha\cdot\alpha^{-1})$. But
$|\lambda\setminus\operatorname{dom}(\alpha\cdot\alpha^{-1})|=
|\lambda\setminus\operatorname{dom}\alpha|<
|\lambda\setminus\operatorname{dom}\varepsilon|$, a contradiction.
An obtained contradiction implies the statement of the proposition.
\end{proof}

\begin{proposition}\label{proposition-3.6}
Let $\mathfrak{R}$ be an arbitrary congruence on the semigroup
$\mathscr{I}^{\infty}_\lambda$. Let $\alpha$ and $\beta$ be
non-$\mathscr{H}$-equivalent elements of
$\mathscr{I}^{\infty}_\lambda$ such that $\alpha\mathfrak{R}\beta$.
Then $\gamma\mathfrak{R}\alpha$ for all
$\gamma\in\mathscr{I}^{\infty}_\lambda$ such that
\begin{equation*}
|\lambda\setminus\operatorname{dom}\gamma|\geqslant
\min\left\{|\lambda\setminus\operatorname{dom}\alpha|,
|\lambda\setminus\operatorname{dom}\beta|\right\}.
\end{equation*}
\end{proposition}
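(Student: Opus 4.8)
The plan is to reduce the statement to the idempotent case already handled by Theorem~\ref{theorem-3.4}. Since $\alpha$ and $\beta$ are non-$\mathscr{H}$-equivalent, Proposition~\ref{proposition-2.2}$(vii)$ tells us that either $\operatorname{dom}\alpha\neq\operatorname{dom}\beta$ or $\operatorname{ran}\alpha\neq\operatorname{ran}\beta$. In either case I would use the inverse-semigroup structure: from $\alpha\mathfrak{R}\beta$ we get $\alpha^{-1}\mathfrak{R}\beta^{-1}$ (Lemma~III.1.1 of \cite{Petrich1984}), hence $(\alpha\alpha^{-1})\mathfrak{R}(\beta\beta^{-1})$ and $(\alpha^{-1}\alpha)\mathfrak{R}(\beta^{-1}\beta)$, which multiply on the appropriate side. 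So both pairs of idempotents $\alpha\alpha^{-1},\beta\beta^{-1}$ and $\alpha^{-1}\alpha,\beta^{-1}\beta$ are $\mathfrak{R}$-equivalent, with $\operatorname{dom}(\alpha\alpha^{-1})=\operatorname{dom}\alpha$, $\operatorname{dom}(\beta\beta^{-1})=\operatorname{dom}\beta$, $\operatorname{dom}(\alpha^{-1}\alpha)=\operatorname{ran}\alpha$, $\operatorname{dom}(\beta^{-1}\beta)=\operatorname{ran}\beta$.

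The key step is to produce a genuine pair of \emph{distinct} $\mathfrak{R}$-equivalent idempotents whose domain-codimensions are controlled by $\min\{|\lambda\setminus\operatorname{dom}\alpha|,|\lambda\setminus\operatorname{dom}\beta|\}$. If $\operatorname{dom}\alpha\neq\operatorname{dom}\beta$ then $\alpha\alpha^{-1}$ and $\beta\beta^{-1}$ are already distinct idempotents, and since $|\lambda\setminus\operatorname{dom}(\alpha\alpha^{-1})|=|\lambda\setminus\operatorname{dom}\alpha|$ and similarly for $\beta$, Theorem~\ref{theorem-3.4} applied to these two idempotents gives $\gamma\mathfrak{R}(\alpha\alpha^{-1})$ for every $\gamma$ with $|\lambda\setminus\operatorname{dom}\gamma|\geqslant\min\{|\lambda\setminus\operatorname{dom}\alpha|,|\lambda\setminus\operatorname{dom}\beta|\}$. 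The same argument on the range side handles $\operatorname{ran}\alpha\neq\operatorname{ran}\beta$. It remains only to connect $\alpha$ itself to one of these idempotents: taking $\gamma=\alpha$ (or $\gamma=\alpha\alpha^{-1}$, depending on how the codimensions compare) we obtain $\alpha\mathfrak{R}(\alpha\alpha^{-1})$ once $|\lambda\setminus\operatorname{dom}\alpha|$ is large enough, and then $\gamma\mathfrak{R}\alpha$ follows by transitivity for all the $\gamma$ in the stated range.

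The main obstacle I anticipate is the boundary bookkeeping: the hypothesis in Theorem~\ref{theorem-3.4} is $|\lambda\setminus\operatorname{dom}\gamma|\geqslant\min$, but to conclude $\alpha\mathfrak{R}(\alpha\alpha^{-1})$ one needs $|\lambda\setminus\operatorname{dom}\alpha|$ to meet that same bound, which is exactly $\min\{|\lambda\setminus\operatorname{dom}\alpha|,|\lambda\setminus\operatorname{dom}\beta|\}\leqslant|\lambda\setminus\operatorname{dom}\alpha|$ — so this is automatic. The slightly delicate point is the case where, say, $\operatorname{dom}\alpha=\operatorname{dom}\beta$ but $\operatorname{ran}\alpha\neq\operatorname{ran}\beta$ and $|\lambda\setminus\operatorname{ran}\alpha|$ happens to be strictly larger than $\min\{|\lambda\setminus\operatorname{dom}\alpha|,|\lambda\setminus\operatorname{dom}\beta|\}$; but by Proposition~\ref{proposition-2.1} we have $|\lambda\setminus\operatorname{ran}\alpha|=|\lambda\setminus\operatorname{dom}\alpha|$, so the range-side codimensions agree with the domain-side ones and the bound transfers without loss. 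Once these numerical identities are in place, the proof is a short chain of $\mathfrak{R}$-deductions, with all the real work borrowed from Theorem~\ref{theorem-3.4}.
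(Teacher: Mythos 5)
Your proposal is correct and follows essentially the same route as the paper: pass to the distinct $\mathfrak{R}$-equivalent idempotents $\alpha\alpha^{-1},\beta\beta^{-1}$ (or $\alpha^{-1}\alpha,\beta^{-1}\beta$) via $\alpha^{-1}\mathfrak{R}\beta^{-1}$, note that the domain codimensions match those of $\alpha,\beta$ by Proposition~\ref{proposition-2.1}, and invoke Theorem~\ref{theorem-3.4}. You even spell out the final transitivity step ($\alpha\mathfrak{R}\alpha\alpha^{-1}$ since $\alpha$ itself meets the bound) that the paper leaves implicit.
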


\begin{proof}
Since $\alpha$ and $\beta$ are non-$\mathscr{H}$-equivalent elements
of the inverse semigroup $\mathscr{I}^{\infty}_\lambda$ we conclude
that at least one of the following conditions holds:
\begin{itemize}
  \item[$(i)$] $\alpha\cdot\alpha^{-1}\neq\beta\cdot\beta^{-1}$; \,
  \item[$(ii)$] $\alpha^{-1}\cdot\alpha\neq\beta^{-1}\cdot\beta$.
\end{itemize}
Suppose that the case
$\alpha\cdot\alpha^{-1}\neq\beta\cdot\beta^{-1}$ holds. In the other
case the proof is similar. Since
$\mathscr{I}^{\infty}_\lambda$ is an inverse semigroup
Lemma~III.1.1~\cite{Petrich1984} implies that
$\beta^{-1}\mathfrak{R}\alpha^{-1}$ and hence
$(\beta\cdot\beta^{-1})\mathfrak{R}(\alpha\cdot\alpha^{-1})$. Then we
have that
\begin{equation*}
    |\lambda\setminus\operatorname{dom}\alpha|=
    |\lambda\setminus\operatorname{dom}(\alpha\cdot\alpha^{-1})|
    \qquad \mbox{and} \qquad
    |\lambda\setminus\operatorname{dom}\beta|=
    |\lambda\setminus\operatorname{dom}(\beta\cdot\beta^{-1})|
\end{equation*}
and hence the assumptions of the Theorem~\ref{theorem-3.4} hold.
This completes the proof of the proposition.
\end{proof}

\begin{proposition}\label{proposition-3.7}
Let $\mathfrak{R}$ be an arbitrary congruence on the semigroup
$\mathscr{I}^{\infty}_\lambda$. If $\alpha$ and $\beta$ are distinct
$\mathscr{H}$-equivalent elements of $\mathscr{I}^{\infty}_\lambda$
such that $\alpha\mathfrak{R}\beta$, then $\gamma\mathfrak{R}\alpha$
for all $\gamma\in\mathscr{I}^{\infty}_\lambda$ such that
\begin{equation*}
|\lambda\setminus\operatorname{dom}\gamma|>
|\lambda\setminus\operatorname{dom}\alpha|.
\end{equation*}
\end{proposition}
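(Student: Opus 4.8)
The plan is to imitate, one step lower in rank-defect, the argument the paper used for Proposition~\ref{proposition-3.6}. Write $n=|\lambda\setminus\operatorname{dom}\alpha|$. Since $\alpha\mathscr{H}\beta$, Proposition~\ref{proposition-2.2}$(v)$--$(vii)$ gives $\operatorname{dom}\alpha=\operatorname{dom}\beta$, $\operatorname{ran}\alpha=\operatorname{ran}\beta$ and $|\lambda\setminus\operatorname{dom}\beta|=n$; hence $\alpha\alpha^{-1}=\beta\beta^{-1}$ and $\alpha^{-1}\alpha=\beta^{-1}\beta$, so the device used in Proposition~\ref{proposition-3.6} (passing to $\alpha\alpha^{-1}\mathfrak{R}\beta\beta^{-1}$) now yields equal, not distinct, idempotents and must be pushed down a level. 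Because $\alpha\ne\beta$ while they share domain and range, a short injectivity argument (using Proposition~\ref{proposition-2.1}) shows that $S=\{x\in\operatorname{dom}\alpha\mid (x)\alpha\ne (x)\beta\}$ is finite with $|S|\geqslant 2$; fix $c\in S$, so $(c)\alpha\ne (c)\beta$.

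The first real step is to manufacture, at rank-defect $n+1$, a pair of distinct, non-$\mathscr{H}$-equivalent, $\mathfrak{R}$-equivalent elements. Let $\varepsilon_c\in E(\mathscr{I}^{\infty}_\lambda)$ be the partial identity with $\operatorname{dom}\varepsilon_c=\operatorname{dom}\alpha\setminus\{c\}$, so $\varepsilon_c<\alpha\alpha^{-1}$ and $|\lambda\setminus\operatorname{dom}\varepsilon_c|=n+1$. From $\alpha\mathfrak{R}\beta$ we get $\varepsilon_c\cdot\alpha\ \mathfrak{R}\ \varepsilon_c\cdot\beta$, and a direct computation gives $\operatorname{dom}(\varepsilon_c\alpha)=\operatorname{dom}(\varepsilon_c\beta)=\operatorname{dom}\alpha\setminus\{c\}$ while $\operatorname{ran}(\varepsilon_c\alpha)=\operatorname{ran}\alpha\setminus\{(c)\alpha\}$ and $\operatorname{ran}(\varepsilon_c\beta)=\operatorname{ran}\alpha\setminus\{(c)\beta\}$. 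As $(c)\alpha\ne(c)\beta$ these ranges differ, so $\varepsilon_c\alpha$ and $\varepsilon_c\beta$ are distinct and non-$\mathscr{H}$-equivalent, and both lie at rank-defect $n+1$. Applying Proposition~\ref{proposition-3.6} to the pair $(\varepsilon_c\alpha,\varepsilon_c\beta)$ gives $\xi\mathfrak{R}(\varepsilon_c\alpha)$ for every $\xi$ with $|\lambda\setminus\operatorname{dom}\xi|\geqslant n+1$. In particular any two idempotents of rank-defect $n+1$ are $\mathfrak{R}$-equivalent; feeding two distinct such idempotents into Theorem~\ref{theorem-3.4} re-expresses this as: $\gamma\mathfrak{R}\delta$ for all $\gamma,\delta\in\mathscr{I}^{\infty}_\lambda$ with $|\lambda\setminus\operatorname{dom}\gamma|,|\lambda\setminus\operatorname{dom}\delta|>n$, i.e.\ the ideal $I_{n+1}$ collapses to a single $\mathfrak{R}$-class.

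It remains to place $\alpha$ itself in this class, and I expect this to be the main obstacle. The natural attempt is to exploit $\alpha\mathfrak{R}\beta$ once more: one seeks $u,v\in\mathscr{I}^{\infty}_\lambda$ with, say, $u\alpha v=\alpha$ but $|\lambda\setminus\operatorname{dom}(u\beta v)|>n$ (or the roles reversed), which by the previous paragraph would force $\alpha\mathfrak{R}(u\beta v)$ and hence $\alpha$ into the big class, whence $\gamma\mathfrak{R}\alpha$ for all $\gamma$ with $|\lambda\setminus\operatorname{dom}\gamma|>n$. The delicate point is the rank-defect bookkeeping: one must choose $u,v$ so that post- and pre-composition with $\alpha$ is rank-defect preserving while composition with $\beta$ (which agrees with $\alpha$ off the finite set $S$) loses a point --- this is exactly where the structure of $\alpha$ versus $\beta$ on $S$, and the freedom in choosing $\operatorname{dom}v$ and $\operatorname{ran}u$ relative to $S$, $(S)\alpha$ and $(S)\beta$, has to be used carefully; alternatively, one may route through the maximal subgroup $\textsf{S}_\infty(\lambda)$ containing $\alpha^{-1}\beta$, noting that its $\mathfrak{R}$-class (a nontrivial normal subgroup) is large, and then multiply by $\alpha$ and by a rank-lowering idempotent.
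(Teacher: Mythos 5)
Your first two paragraphs reproduce, correctly, the paper's own argument. The paper merely normalizes first: from $\alpha\mathfrak{R}\beta$ it passes to $\alpha\cdot\alpha^{-1}\,\mathfrak{R}\,\beta\cdot\alpha^{-1}$ so that one may take $\alpha$ to be the identity of its $\mathscr{H}$-class and $\beta$ a nontrivial permutation of $\operatorname{dom}\alpha$; it then multiplies by the partial identity $\varepsilon_1$ on $\operatorname{dom}\alpha\setminus\{x\}$ with $(x)\beta\neq x$, notes $\varepsilon_1\cdot\alpha=\varepsilon_1$ while $\operatorname{ran}(\varepsilon_1\cdot\beta)\neq\operatorname{ran}\varepsilon_1$, and invokes Proposition~\ref{proposition-3.6}. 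Your computation with $\varepsilon_c\cdot\alpha$ versus $\varepsilon_c\cdot\beta$ is the same device without the normalization, and it is sound: it yields that all elements of rank defect at least $n+1$ lie in a single $\mathfrak{R}$-class (your detour through Theorem~\ref{theorem-3.4} is unnecessary --- transitivity through $\varepsilon_c\cdot\alpha$ already gives this --- but harmless).

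The ``main obstacle'' of your third paragraph is not an obstacle you can overcome: the conclusion as literally stated, $\gamma\mathfrak{R}\alpha$, is false, so no choice of $u,v$ and no route through $\textsf{S}_\infty(\lambda)$ will produce it. Indeed, the paper's own congruence $\mathfrak{K}_n(\textsf{S}_\infty)$ of Definition~\ref{definition-3.11} and Proposition~\ref{proposition-3.12} satisfies the hypotheses (take distinct $\mathscr{H}$-equivalent $\alpha,\beta$ of defect exactly $n$), yet it relates no element of $I_{n+1}$ to $\alpha$; the literal reading would also contradict the classification in Theorem~\ref{theorem-3.15}. Consistently with this, the paper's proof stops exactly where your second paragraph stops --- it only verifies that the pushed-down pair meets the hypotheses of Proposition~\ref{proposition-3.6} --- and in the proof of Theorem~\ref{theorem-3.15} only that weaker conclusion is used, namely that any two elements $\gamma_1,\gamma_2$ with $|\lambda\setminus\operatorname{dom}\gamma_i|>|\lambda\setminus\operatorname{dom}\alpha|$ are $\mathfrak{R}$-equivalent (equivalently, $\gamma\mathfrak{R}(\varepsilon_c\cdot\alpha)$ for all such $\gamma$). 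Read the statement that way and your proof is complete and essentially identical to the paper's; do not spend further effort on the last step.
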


\begin{proof}
Since $\mathscr{I}^{\infty}_\lambda$ is an inverse semigroup
Theorem~2.20~\cite{CP} and
Proposition~\ref{proposition-2.2}~$(viii)$ imply that without loss
of generality we can assume that $\alpha$ and $\beta$ are elements
of a maximal subgroup $H(\varepsilon)$ of
$\mathscr{I}^{\infty}_\lambda$ with unity $\varepsilon$. Since
$(\alpha\cdot\alpha^{-1})\mathfrak{R}(\beta\cdot\alpha^{-1})$ we can
assume that $\alpha$ is an identity of the subgroup
$H(\varepsilon)$. Let $x\in\operatorname{dom}\alpha$ such that
$(x)\beta\neq x$. We put
$\varepsilon_1\colon\operatorname{dom}\alpha\setminus\{x\}
\rightarrow\operatorname{dom}\alpha\setminus\{x\}$ be an identity
map. Then $\varepsilon_1\cdot\alpha=\varepsilon_1$ and
$\operatorname{ran}(\varepsilon_1\cdot\beta)\neq
\operatorname{ran}(\varepsilon_1)$. Therefore by
Proposition~\ref{proposition-2.2}~$(vii)$ we get that the elements
$\varepsilon_1$ and $\varepsilon_1\cdot\beta$ are not
$\mathscr{H}$-equivalent. Since
$|\lambda\setminus\operatorname{dom}\varepsilon_1|=
|\lambda\setminus\operatorname{dom}(\varepsilon_1\cdot\beta)|$ we
have that the assumptions of Proposition~\ref{proposition-3.6} hold.
This completes the proof of the proposition.
\end{proof}

Theorem~\ref{theorem-3.4} and Propositions~\ref{proposition-3.5},
\ref{proposition-3.6} and \ref{proposition-3.7} imply the
following proposition:

\begin{proposition}\label{proposition-3.8}
Let $\mathfrak{R}$ be an arbitrary congruence on the semigroup
$\mathscr{I}^{\infty}_\lambda$. Let $\alpha$ and $\beta$ be distinct
$\mathscr{H}$-equivalent elements of $\mathscr{I}^{\infty}_\lambda$
such that $\alpha\mathfrak{R}\beta$ and suppose that there does not
exist $\gamma\in\mathscr{I}^{\infty}_\lambda$ such that
$\alpha\mathfrak{R}\gamma$ and
$|\lambda\setminus\operatorname{dom}\gamma|<
|\lambda\setminus\operatorname{dom}\alpha|$. Then elements $\mu,
\nu\in\mathscr{I}^{\infty}_\lambda$ with
$|\lambda\setminus\operatorname{dom}\mu|<
|\lambda\setminus\operatorname{dom}\alpha|$ and
$|\lambda\setminus\operatorname{dom}\nu|<
|\lambda\setminus\operatorname{dom}\alpha|$ are
$\mathfrak{R}$-equivalent if and only if $\mu=\nu$.
\end{proposition}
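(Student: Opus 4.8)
The plan is to argue by contradiction, reducing everything to the two ``collapsing'' statements already established: Proposition~\ref{proposition-3.6} and Proposition~\ref{proposition-3.7}. Write $n=|\lambda\setminus\operatorname{dom}\alpha|$; by Proposition~\ref{proposition-2.2}~$(vii)$ we also have $n=|\lambda\setminus\operatorname{dom}\beta|$, so the standing hypothesis says precisely that the $\mathfrak{R}$-class of $\alpha$ contains no element whose domain has complement of size $<n$. The ``if'' half of the asserted equivalence is just reflexivity of $\mathfrak{R}$, so the whole content is the ``only if'' half: assuming $\mu,\nu\in\mathscr{I}^{\infty}_\lambda$ satisfy $|\lambda\setminus\operatorname{dom}\mu|<n$, $|\lambda\setminus\operatorname{dom}\nu|<n$ and $\mu\mathfrak{R}\nu$, I must deduce $\mu=\nu$. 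Note that if $n=0$ the statement is vacuous, since no element has domain with complement of negative size; so one may assume $n\geqslant 1$.

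Next I would suppose, toward a contradiction, that $\mu\neq\nu$ and split into two cases. If $\mu$ and $\nu$ are \emph{not} $\mathscr{H}$-equivalent, apply Proposition~\ref{proposition-3.6} to the pair $(\mu,\nu)$: it yields $\gamma\mathfrak{R}\mu$ for every $\gamma$ with $|\lambda\setminus\operatorname{dom}\gamma|\geqslant\min\{|\lambda\setminus\operatorname{dom}\mu|,|\lambda\setminus\operatorname{dom}\nu|\}$. That minimum is at most $|\lambda\setminus\operatorname{dom}\mu|<n=|\lambda\setminus\operatorname{dom}\alpha|$, so the element $\alpha$ itself is an admissible choice of $\gamma$, giving $\alpha\mathfrak{R}\mu$; but then $\mu$ lies in the $\mathfrak{R}$-class of $\alpha$ and $|\lambda\setminus\operatorname{dom}\mu|<|\lambda\setminus\operatorname{dom}\alpha|$, contradicting the hypothesis. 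If instead $\mu$ and $\nu$ are distinct and $\mathscr{H}$-equivalent, apply Proposition~\ref{proposition-3.7} to $(\mu,\nu)$: it gives $\gamma\mathfrak{R}\mu$ for every $\gamma$ with $|\lambda\setminus\operatorname{dom}\gamma|>|\lambda\setminus\operatorname{dom}\mu|$. Since $n>|\lambda\setminus\operatorname{dom}\mu|$, again $\alpha$ qualifies, so $\alpha\mathfrak{R}\mu$ with $|\lambda\setminus\operatorname{dom}\mu|<|\lambda\setminus\operatorname{dom}\alpha|$, the same contradiction. Hence $\mu=\nu$, which is the proposition. (The proofs of Propositions~\ref{proposition-3.6} and~\ref{proposition-3.7} themselves rest on Theorem~\ref{theorem-3.4}, so the result indeed rides on the whole chain built up in this section.)

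Every pair of distinct elements of $\mathscr{I}^{\infty}_\lambda$ falls into exactly one of the two cases, so no further case analysis is needed, and there is no genuine computation. The only step requiring a moment's care — the nearest thing to an obstacle — is checking that the \emph{given} element $\alpha$, rather than $\mu$ or $\nu$, is a legitimate instance of the variable ``$\gamma$'' quantified over in Propositions~\ref{proposition-3.6} and~\ref{proposition-3.7}; this comes down to the inequalities $n\geqslant\min\{|\lambda\setminus\operatorname{dom}\mu|,|\lambda\setminus\operatorname{dom}\nu|\}$ and $n>|\lambda\setminus\operatorname{dom}\mu|$, both immediate from $|\lambda\setminus\operatorname{dom}\mu|<n$ and $|\lambda\setminus\operatorname{dom}\nu|<n$.
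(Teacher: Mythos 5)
Your argument is correct and is essentially the paper's own: the paper gives no separate proof, merely asserting that the proposition follows from Theorem~\ref{theorem-3.4} and Propositions~\ref{proposition-3.5}--\ref{proposition-3.7}, and your contradiction argument (applying Proposition~\ref{proposition-3.6} or~\ref{proposition-3.7} to a hypothetical pair $\mu\neq\nu$ to force $\alpha\mathfrak{R}\mu$ against the minimality hypothesis) is exactly the intended filling-in of that implication.
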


\begin{definition}\label{definition-3.9}
For every non-negative integer $n$ we denote by $\mathfrak{K}_n(I)$
the congruence on the semigroup $\mathscr{I}^{\infty}_\lambda$
generated by the ideal $I_n$, i.~e., $\mathfrak{K}_n(I)=(I_n\times
I_n)\cup\Delta(\mathscr{I}^{\infty}_\lambda)$. We observe that
$\mathfrak{K}_0(I)=\Omega(\mathscr{I}^{\infty}_\lambda)$.
\end{definition}

\begin{remark}\label{remark-3.10}
The group $\textsf{S}_\infty(\lambda)$ has only one non-trivial
normal subgroup: that is a group $\textsf{A}_\infty(\lambda)$ of all
even permutations of the set $\lambda$ (see \cite[pp.~313--314,
Example]{Guran1981} or \cite{KarrasSolitar1956}). Therefore every
non-trivial homomorphism of $\textsf{S}_\infty(\lambda)$ is either
an isomorphism or its image is a two-elements cyclic group.
\end{remark}

\begin{definition}\label{definition-3.11}
Fix an arbitrary non-negative integer $n$. We shall say that
elements $\alpha$ and $\beta$ of the semigroup
$\mathscr{I}^{\infty}_\lambda$ are
\emph{$n_{\textsf{S}_\infty}$-equivalent} if the following
conditions hold:
\begin{itemize}
  \item[$(i)$] $\alpha\mathscr{H}\beta$; \; and
  \item[$(ii)$] $|\lambda\setminus\operatorname{dom}\alpha|=
   |\lambda\setminus\operatorname{dom}\beta|=n$.
\end{itemize}

We define a relation $\mathfrak{K}_n(\textsf{S}_\infty)$ on the
semigroup $\mathscr{I}^{\infty}_\lambda$ as follows:
\begin{equation*}
    \mathfrak{K}_n(\textsf{S}_\infty)=
    \{(\alpha,\beta)\mid
    (\alpha,\beta)\in n_{\textsf{S}_\infty}\}\cup
    (I_{n+1}\times I_{n+1})\cup\Delta(\mathscr{I}^{\infty}_\lambda).
\end{equation*}
Simple verifications show that so defined relation
$\mathfrak{K}_n(\textsf{S}_\infty)$ on
$\mathscr{I}^{\infty}_\lambda$ is an equivalence relation for every
non-negative integer $n$.
\end{definition}

\begin{proposition}\label{proposition-3.12}
The relation $\mathfrak{K}_n(\textsf{S}_\infty)$ is a congruence on
the semigroup $\mathscr{I}^{\infty}_\lambda$.
\end{proposition}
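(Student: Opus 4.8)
The plan is to show that $\mathfrak{K}_n(\textsf{S}_\infty)$ is compatible with multiplication on both sides, since it is already known to be an equivalence relation. Because $\mathscr{I}^{\infty}_\lambda$ is an inverse semigroup, it suffices to verify left compatibility (right compatibility then follows by applying the inversion and Lemma~III.1.1 of \cite{Petrich1984}, or is checked by the symmetric argument). So I would fix $(\alpha,\beta)\in\mathfrak{K}_n(\textsf{S}_\infty)$ with $\alpha\neq\beta$ and an arbitrary $\gamma\in\mathscr{I}^{\infty}_\lambda$, and show $(\gamma\alpha,\gamma\beta)\in\mathfrak{K}_n(\textsf{S}_\infty)$; the analogous statement for right translations is symmetric.

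First I would dispose of the easy case: if $(\alpha,\beta)\in I_{n+1}\times I_{n+1}$, then since $I_{n+1}$ is an ideal (Proposition~\ref{proposition-2.2}$(x)$) both $\gamma\alpha$ and $\gamma\beta$ lie in $I_{n+1}$, so the pair stays in $\mathfrak{K}_n(\textsf{S}_\infty)$. The substantive case is $(\alpha,\beta)\in n_{\textsf{S}_\infty}$, i.e. $\alpha\mathscr{H}\beta$ with $|\lambda\setminus\operatorname{dom}\alpha|=|\lambda\setminus\operatorname{dom}\beta|=n$. Here I would compute $|\lambda\setminus\operatorname{dom}(\gamma\alpha)|$. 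Since a right translation of $\mathscr{I}_\lambda$ cannot decrease the codomain defect, $|\lambda\setminus\operatorname{dom}(\gamma\alpha)|\geqslant n$ and likewise for $\gamma\beta$; moreover $\operatorname{dom}(\gamma\alpha)=\operatorname{dom}(\gamma\beta)$ because $\operatorname{dom}\alpha=\operatorname{dom}\beta$ (they are $\mathscr{R}$-related, Proposition~\ref{proposition-2.2}$(v)$), and $\gamma\alpha$ agrees with $\gamma\beta$ on a point $x$ exactly when... well, they need not agree. So I split according to whether $|\lambda\setminus\operatorname{dom}(\gamma\alpha)|=n$ or $>n$. If the defect is $>n$, then $\gamma\alpha,\gamma\beta\in I_{n+1}$ and we are done. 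If the defect equals $n$, then $\gamma$ must restrict to a bijection from $\operatorname{ran}(\gamma^{-1}\gamma\alpha)$-sized set appropriately; the key point is that $\gamma\alpha$ and $\gamma\beta$ then have the same domain and the same range (since $\operatorname{ran}\alpha=\operatorname{ran}\beta$ and $\gamma$ applied to equal ranges produces equal ranges once the defect is not increased), hence $\gamma\alpha\mathscr{H}\gamma\beta$ by Proposition~\ref{proposition-2.2}$(vii)$, and both have defect $n$, so $(\gamma\alpha,\gamma\beta)\in n_{\textsf{S}_\infty}\subseteq\mathfrak{K}_n(\textsf{S}_\infty)$.

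The one remaining subtlety is that the definition of $n_{\textsf{S}_\infty}$-equivalence requires $\gamma\alpha\neq\gamma\beta$ to place the pair in the first block rather than in the diagonal; but if $\gamma\alpha=\gamma\beta$ the pair lies in $\Delta(\mathscr{I}^{\infty}_\lambda)$ anyway, so there is nothing to check. Thus the relation is a congruence. The main obstacle I anticipate is the range-equality bookkeeping in the ``defect $=n$'' subcase: one must argue that when the right translation by $\alpha$ (resp. $\beta$) does not raise the defect above $n$, the resulting range $\operatorname{ran}(\gamma\alpha)$ depends only on $\operatorname{ran}\gamma$ and $\operatorname{dom}\alpha=\operatorname{dom}\beta$, $\operatorname{ran}\alpha=\operatorname{ran}\beta$ — concretely, $\operatorname{ran}(\gamma\alpha)=\{(x)\alpha : x\in\operatorname{ran}\gamma\cap\operatorname{dom}\alpha\}\cdot$ wait, composition order in this paper is left action, so $\operatorname{ran}(\gamma\alpha)=(\operatorname{ran}\gamma\cap\operatorname{dom}\alpha)\alpha$, and since $\operatorname{dom}\alpha=\operatorname{dom}\beta$ and $\alpha,\beta$ agree off a finite set inside a common cofinite identity block, one checks the images coincide exactly when the defect is not increased. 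Carrying out this check cleanly, using Proposition~\ref{proposition-2.1} to control the various finite defects, is the only place where care is needed; everything else is immediate from the ideal property of $I_{n+1}$ and the Green-relation descriptions in Proposition~\ref{proposition-2.2}.
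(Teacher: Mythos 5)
Your proof is correct and follows essentially the same route as the paper: handle the $I_{n+1}\times I_{n+1}$ block via the ideal property of $I_{n+1}$, and in the $n_{\textsf{S}_\infty}$ block split according to whether the translation raises the defect above $n$ (which is exactly the paper's dichotomy $\operatorname{dom}\alpha\subseteq\operatorname{ran}\gamma$ versus $\operatorname{dom}\alpha\nsubseteq\operatorname{ran}\gamma$), using Proposition~\ref{proposition-2.2}~$(v)$--$(vii)$ to get $\gamma\alpha\,\mathscr{H}\,\gamma\beta$ with common range $\operatorname{ran}\alpha=\operatorname{ran}\beta$ in the defect-preserving case. The only cosmetic differences are that you do not treat $n=0$ separately (your unified argument covers it) and that you note right compatibility can be reduced to left compatibility via inversion invariance, whereas the paper simply invokes the symmetric argument.
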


\begin{proof}
First we consider the case when $n=0$. If $\alpha$ and $\beta$ are
distinct elements of the semigroup $\mathscr{I}^{\infty}_\lambda$
such that $\alpha\mathfrak{K}_0(\textsf{S}_\infty)\beta$, then
either $\alpha,\beta\in H(\mathbb{I})$ or $\alpha,\beta\in I_1$.
Suppose that $\alpha,\beta\in H(\mathbb{I})$. Then for every
$\gamma\in\mathscr{I}^{\infty}_\lambda$ we have that either
$\alpha\cdot\gamma,\beta\cdot\gamma\in H(\mathbb{I})$ or
$\alpha\cdot\gamma,\beta\cdot\gamma\in I_1$, and similarly we get
that either $\gamma\cdot\alpha,\gamma\cdot\beta\in H(\mathbb{I})$ or
$\gamma\cdot\alpha,\gamma\cdot\beta\in I_1$. If $\alpha,\beta\in
I_1$ then for every $\gamma\in\mathscr{I}^{\infty}_\lambda$ we have
that $\alpha\cdot\gamma, \beta\cdot\gamma, \alpha\cdot\gamma,
\beta\cdot\gamma\in I_1$. Therefore
$\mathfrak{K}_0(\textsf{S}_\infty)$ is a congruence on the semigroup
$\mathscr{I}^{\infty}_\lambda$.

Suppose that $n$ is an arbitrary positive integer. Let $\alpha$ and
$\beta$ be distinct elements of the semigroup
$\mathscr{I}^{\infty}_\lambda$ such that
$\alpha\mathfrak{K}_n(\textsf{S}_\infty)\beta$. The definition of
the relation $\mathfrak{K}_n(\textsf{S}_\infty)$ implies that only
one of the following conditions holds:
\begin{itemize}
  \item[$(i)$] $|\lambda\setminus\operatorname{dom}\alpha|=
   |\lambda\setminus\operatorname{dom}\beta|=n$; \; or
  \item[$(ii)$] $|\lambda\setminus\operatorname{dom}\alpha|>n$ and
   $|\lambda\setminus\operatorname{dom}\beta|>n$.
\end{itemize}

First we suppose that $|\lambda\setminus\operatorname{dom}\alpha|=
|\lambda\setminus\operatorname{dom}\beta|=n$. Let $\gamma$ be an
arbitrary element of the semigroup $\mathscr{I}^{\infty}_\lambda$.
We consider two cases:
\begin{itemize}
  \item[$a)$]
   $\operatorname{dom}\alpha\subseteq\operatorname{ran}\gamma$; \;
   and
  \item[$b)$]
   $\operatorname{dom}\alpha\nsubseteq\operatorname{ran}\gamma$.
\end{itemize}
Since the elements $\alpha$ and $\beta$ are $\mathscr{H}$-equivalent
in $\mathscr{I}^{\infty}_\lambda$
Proposition~\ref{proposition-2.2}~$(vii)$ implies that in case $a)$
we have that  $\operatorname{dom}(\gamma\cdot\alpha)=
\operatorname{dom}(\gamma\cdot\beta)$ and
$\operatorname{ran}(\gamma\cdot\alpha)=
\operatorname{ran}(\gamma\cdot\beta)$. Then again by
Proposition~\ref{proposition-2.2}~$(vii)$ the elements
$\gamma\cdot\alpha$ and $\gamma\cdot\beta$ are
$\mathscr{H}$-equivalent in $\mathscr{I}^{\infty}_\lambda$. Since
$\operatorname{dom}\alpha\subseteq\operatorname{ran}\gamma$ we get
that $|\lambda\setminus\operatorname{dom}(\gamma\cdot\alpha)|=
|\lambda\setminus\operatorname{dom}(\gamma\cdot\beta)|=n$. Hence we
obtain that
$(\gamma\cdot\alpha)\mathfrak{K}_n(\textsf{S}_\infty)(\gamma\cdot\beta)$.
In case $b)$ we have that $\gamma\cdot\alpha,\gamma\cdot\beta\in
I_{n+1}$ and hence
$(\gamma\cdot\alpha)\mathfrak{K}_n(\textsf{S}_\infty)(\gamma\cdot\beta)$.

The proof the assertion that
$\alpha\mathfrak{K}_n(\textsf{S}_\infty)\beta$ implies
$(\alpha\cdot\delta)\mathfrak{K}_n(\textsf{S}_\infty)(\beta\cdot\delta)$
for every $\delta\in\mathscr{I}^{\infty}_\lambda$ is similar.

Suppose that $|\lambda\setminus\operatorname{dom}\alpha|>n$ and
$|\lambda\setminus\operatorname{dom}\beta|>n$. Then $\alpha,\beta\in
I_{n+1}$. By Proposition~\ref{proposition-2.2}~$(x)$  we have that
$\gamma\cdot\alpha, \gamma\cdot\beta, \alpha\cdot\delta,
\beta\cdot\delta\in I_{n+1}$ and hence
$(\gamma\cdot\alpha)\mathfrak{K}_n(\textsf{S}_\infty)(\gamma\cdot\beta)$
and
$(\alpha\cdot\delta)\mathfrak{K}_n(\textsf{S}_\infty)(\beta\cdot\delta)$
for all $\gamma,\delta\in\mathscr{I}^{\infty}_\lambda$. This
completes the proof of the proposition.
\end{proof}

\begin{definition}\label{definition-3.13}
Fix an arbitrary non-negative integer $n$. We shall say that
elements $\alpha$ and $\beta$ of the semigroup
$\mathscr{I}^{\infty}_\lambda$ are
\emph{$n_{\textsf{A}_\infty}$-equivalent} if the following
conditions hold:
\begin{itemize}
  \item[$(i)$] $\alpha\mathscr{H}\beta$;
  \item[$(ii)$] $\alpha\cdot\beta^{-1}$ is an even permutation of
   the set $\operatorname{dom}\alpha$; \; and
  \item[$(iii)$] $|\lambda\setminus\operatorname{dom}\alpha|=
   |\lambda\setminus\operatorname{dom}\beta|=n$.
\end{itemize}

We define a relation $\mathfrak{K}_n(\textsf{A}_\infty)$ on the
semigroup $\mathscr{I}^{\infty}_\lambda$ as follows:
\begin{equation*}
    \mathfrak{K}_n(\textsf{A}_\infty)=
    \{(\alpha,\beta)\mid
    (\alpha,\beta)\in n_{\textsf{A}_\infty}\}\cup
    (I_{n+1}\times I_{n+1})\cup\Delta(\mathscr{I}^{\infty}_\lambda).
\end{equation*}
Simple verifications show that so defined relation
$\mathfrak{K}_n(\textsf{A}_\infty)$ on
$\mathscr{I}^{\infty}_\lambda$ is an equivalence relation for every
non-negative integer $n$.
\end{definition}

\begin{proposition}\label{proposition-3.14}
The relation $\mathfrak{K}_n(\textsf{A}_\infty)$ is a congruence on
the semigroup $\mathscr{I}^{\infty}_\lambda$.
\end{proposition}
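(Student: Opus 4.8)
The plan is to run the argument of Proposition~\ref{proposition-3.12} essentially verbatim, since $\mathfrak{K}_n(\textsf{A}_\infty)$ is obtained from $\mathfrak{K}_n(\textsf{S}_\infty)$ by only refining the top non-trivial block through the extra requirement (ii) of Definition~\ref{definition-3.13}. Thus all the ``$\mathscr{H}$-bookkeeping'' --- deciding which products fall into the ideal $I_{n+1}$ and which stay $\mathscr{H}$-equivalent with domain-deficiency $n$ --- is taken over unchanged, and the only new thing to check is that condition (ii), the evenness of $\alpha\cdot\beta^{-1}$, is preserved under multiplication by an arbitrary $\gamma\in\mathscr{I}^{\infty}_\lambda$ on either side.

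I would fix distinct $\alpha,\beta$ with $\alpha\,\mathfrak{K}_n(\textsf{A}_\infty)\,\beta$. If $\alpha,\beta\in I_{n+1}$, then $\gamma\cdot\alpha,\gamma\cdot\beta,\alpha\cdot\delta,\beta\cdot\delta\in I_{n+1}$ by Proposition~\ref{proposition-2.2}$(x)$ and there is nothing more to do. Otherwise $\alpha\mathscr{H}\beta$, $|\lambda\setminus\operatorname{dom}\alpha|=|\lambda\setminus\operatorname{dom}\beta|=n$, so $\operatorname{dom}\alpha=\operatorname{dom}\beta$, $\operatorname{ran}\alpha=\operatorname{ran}\beta$, and $\alpha\cdot\beta^{-1}$ is an even permutation of $\operatorname{dom}\alpha$. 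Exactly as in Proposition~\ref{proposition-3.12}, left multiplication splits on whether $\operatorname{dom}\alpha\subseteq\operatorname{ran}\gamma$: if not, $\gamma\cdot\alpha,\gamma\cdot\beta\in I_{n+1}$; if so, $\gamma\cdot\alpha\mathscr{H}\gamma\cdot\beta$ and $|\lambda\setminus\operatorname{dom}(\gamma\cdot\alpha)|=n$. Dually, right multiplication splits on whether $\operatorname{ran}\alpha\subseteq\operatorname{dom}\delta$: if not, $\alpha\cdot\delta,\beta\cdot\delta\in I_{n+1}$; if so, $\alpha\cdot\delta\mathscr{H}\beta\cdot\delta$ and $|\lambda\setminus\operatorname{dom}(\alpha\cdot\delta)|=n$. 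Only the two ``good'' subcases remain.

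For the good right subcase ($\operatorname{ran}\alpha=\operatorname{ran}\beta\subseteq\operatorname{dom}\delta$) the computation is immediate: $\delta\cdot\delta^{-1}$ is the identity map on $\operatorname{dom}\delta\supseteq\operatorname{ran}\alpha$, whence $(\alpha\cdot\delta)\cdot(\beta\cdot\delta)^{-1}=\alpha\cdot(\delta\cdot\delta^{-1})\cdot\beta^{-1}=\alpha\cdot\beta^{-1}$, which is even by hypothesis, so $(\alpha\cdot\delta)\,\mathfrak{K}_n(\textsf{A}_\infty)\,(\beta\cdot\delta)$. For the good left subcase ($\operatorname{dom}\alpha\subseteq\operatorname{ran}\gamma$), put $P=\operatorname{dom}(\gamma\cdot\alpha)$; since $\gamma$ is injective and $\operatorname{dom}\alpha\subseteq\operatorname{ran}\gamma$, the restriction $\gamma|_P\colon P\to\operatorname{dom}\alpha$ is a bijection, and a short check of domains gives $(\gamma\cdot\alpha)\cdot(\gamma\cdot\beta)^{-1}=\gamma\cdot\alpha\cdot\beta^{-1}\cdot\gamma^{-1}$ equal to the permutation of $P$ obtained by transporting $\alpha\cdot\beta^{-1}$ along $\gamma|_P$. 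Transporting a finitely supported permutation along a bijection of the underlying sets preserves its cycle type and hence its sign, so $(\gamma\cdot\alpha)\cdot(\gamma\cdot\beta)^{-1}$ is even on $\operatorname{dom}(\gamma\cdot\alpha)$, giving $(\gamma\cdot\alpha)\,\mathfrak{K}_n(\textsf{A}_\infty)\,(\gamma\cdot\beta)$. For $n=0$, when $\alpha,\beta\notin I_1$ they lie in a common coset of the normal subgroup $\textsf{A}_\infty(\lambda)$ of $H(\mathbb{I})\cong\textsf{S}_\infty(\lambda)$ (Remark~\ref{remark-3.10}), and the same two observations --- $\delta\cdot\delta^{-1}=\mathbb{I}$ when $\delta$ is a unit, and sign-invariance under conjugation --- cover the unit case, while a non-unit $\gamma$ or $\delta$ sends both elements into $I_1$.

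The only place that requires care is verifying the two partial-map identities on the nose, domains included: $(\alpha\cdot\delta)\cdot(\beta\cdot\delta)^{-1}=\alpha\cdot\beta^{-1}$ rests on $\operatorname{ran}\alpha\subseteq\operatorname{dom}\delta$ (so that $\alpha\cdot(\delta\cdot\delta^{-1})=\alpha$ with no shrinking of domain), and $(\gamma\cdot\alpha)\cdot(\gamma\cdot\beta)^{-1}=(\gamma|_P)\cdot(\alpha\cdot\beta^{-1})\cdot(\gamma|_P)^{-1}$ rests on $\operatorname{dom}\alpha\subseteq\operatorname{ran}\gamma$ (so that $\gamma^{-1}$ restricted to $\operatorname{dom}\alpha$ is exactly $(\gamma|_P)^{-1}$ and $\operatorname{dom}(\gamma\cdot\alpha\cdot\beta^{-1}\cdot\gamma^{-1})=P$). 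Everything else is inherited from the proof of Proposition~\ref{proposition-3.12}.
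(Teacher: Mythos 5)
Your proposal is correct and follows essentially the same route as the paper's proof: the ideal/$\mathscr{H}$-class case analysis is carried over from Proposition~\ref{proposition-3.12}, and evenness under left translation is preserved by conjugating $\alpha\cdot\beta^{-1}$ along the restricted bijection $\gamma|_P$, which is exactly the paper's map $\gamma_1=\gamma|_{(\operatorname{dom}\alpha)\gamma^{-1}}$. Your explicit computation $(\alpha\cdot\delta)\cdot(\beta\cdot\delta)^{-1}=\alpha\cdot(\delta\cdot\delta^{-1})\cdot\beta^{-1}=\alpha\cdot\beta^{-1}$ just fills in the right-translation case that the paper dismisses as ``similar.''
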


\begin{proof}
First we consider the case when $n=0$. If $\alpha$ and $\beta$ are
distinct elements of the semigroup $\mathscr{I}^{\infty}_\lambda$
such that $\alpha\mathfrak{K}_0(\textsf{S}_\infty)\beta$, then
either $\alpha,\beta\in H(\mathbb{I})$ or $\alpha,\beta\in I_1$.
Suppose that $\alpha,\beta\in H(\mathbb{I})$. Then for every
$\gamma\in H(\mathbb{I})$ we have that $\alpha\cdot\gamma,
\beta\cdot\gamma, \gamma\cdot\alpha, \gamma\cdot\beta\in
H(\mathbb{I})$. Then
$(\alpha\cdot\gamma)\cdot(\beta\cdot\gamma)^{-1}=
\alpha\cdot\gamma\cdot\gamma^{-1}\cdot\beta^{-1}=
\alpha\cdot\beta^{-1}$ is an even permutation of the set $\lambda$.
Also, since $\alpha\cdot\beta^{-1}$ is an even permutation of the
set $\lambda$ we get that
$(\gamma\cdot\alpha)\cdot(\gamma\cdot\beta)^{-1}=
\gamma\cdot\alpha\cdot\beta^{-1}\cdot\gamma^{-1}$ is an even
permutation of the set $\lambda$ too. For every $\gamma\in I_1$ we
have that $\alpha\cdot\gamma, \beta\cdot\gamma, \gamma\cdot\alpha,
\gamma\cdot\beta\in I_1$. If $\alpha,\beta\in I_1$ then for every
$\gamma\in\mathscr{I}^{\infty}_\lambda$ we have that
$\alpha\cdot\gamma, \beta\cdot\gamma, \alpha\cdot\gamma,
\beta\cdot\gamma\in I_1$. Therefore
$\mathfrak{K}_0(\textsf{A}_\infty)$ is a congruence on the semigroup
$\mathscr{I}^{\infty}_\lambda$.

Suppose that $n$ is an arbitrary positive integer. Let $\alpha$ and
$\beta$ be distinct elements of the semigroup
$\mathscr{I}^{\infty}_\lambda$ such that
$\alpha\mathfrak{K}_n(\textsf{A}_\infty)\beta$. The definition of
the relation $\mathfrak{K}_n(\textsf{A}_\infty)$ implies that only
one of the following conditions holds:
\begin{itemize}
  \item[$(i)$] $|\lambda\setminus\operatorname{dom}\alpha|=
   |\lambda\setminus\operatorname{dom}\beta|=n$; \; or
  \item[$(ii)$] $|\lambda\setminus\operatorname{dom}\alpha|>n$ and
   $|\lambda\setminus\operatorname{dom}\beta|>n$.
\end{itemize}

First we suppose that $|\lambda\setminus\operatorname{dom}\alpha|=
|\lambda\setminus\operatorname{dom}\beta|=n$. Let $\gamma$ be an
arbitrary element of the semigroup $\mathscr{I}^{\infty}_\lambda$.
We consider two cases:
\begin{itemize}
  \item[$a)$]
   $\operatorname{dom}\alpha\subseteq\operatorname{ran}\gamma$; \;
   and
  \item[$b)$]
   $\operatorname{dom}\alpha\nsubseteq\operatorname{ran}\gamma$.
\end{itemize}
Suppose case $a)$ holds. Since the elements $\alpha$ and $\beta$ are
$\mathscr{H}$-equivalent in $\mathscr{I}^{\infty}_\lambda$ we have
that Proposition~\ref{proposition-2.2}~$(vii)$ implies that
$\operatorname{dom}(\gamma\cdot\alpha)=
\operatorname{dom}(\gamma\cdot\beta)$ and
$\operatorname{ran}(\gamma\cdot\alpha)=
\operatorname{ran}(\gamma\cdot\beta)$. Then again by
Proposition~\ref{proposition-2.2}~$(vii)$ the elements
$\gamma\cdot\alpha$ and $\gamma\cdot\beta$ are
$\mathscr{H}$-equivalent in $\mathscr{I}^{\infty}_\lambda$. Since
$\operatorname{dom}\alpha\subseteq\operatorname{ran}\gamma$ we get
that $|\lambda\setminus\operatorname{dom}(\gamma\cdot\alpha)|=
|\lambda\setminus\operatorname{dom}(\gamma\cdot\beta)|=n$. We define
a partial map $\gamma_1\colon\lambda\rightharpoonup\lambda$ as
follows $\gamma_1=\gamma|_{(\operatorname{dom}\alpha)\gamma^{-1}}
\colon(\operatorname{dom}\alpha)\gamma^{-1}\rightarrow
\operatorname{dom}\alpha$. Then we get that
$|\lambda\setminus\operatorname{dom}\gamma_1|=
|\lambda\setminus\operatorname{dom}\alpha|=
|\lambda\setminus\operatorname{dom}\beta|=n$,
$\gamma\cdot\alpha=\gamma_1\cdot\alpha$,
$\gamma\cdot\beta=\gamma_1\cdot\beta$ and hence
$(\gamma\cdot\alpha)\cdot(\gamma\cdot\beta)^{-1}=
(\gamma_1\cdot\alpha)\cdot(\gamma_1\cdot\beta)^{-1}=
\gamma_1\cdot\alpha\cdot\beta^{-1}\cdot\gamma_1^{-1}$. Since
$\alpha\cdot\beta^{-1}$ is an even permutation of the set
$\operatorname{dom}\alpha$ we conclude that
$\gamma_1\cdot\alpha\cdot\beta^{-1}\cdot\gamma_1^{-1}$ is an even
permutation of the set $\operatorname{dom}\gamma_1=
(\operatorname{dom}\alpha)\gamma^{-1}$. Hence we obtain that
$(\gamma\cdot\alpha)\mathfrak{K}_n(\textsf{A}_\infty)(\gamma\cdot\beta)$.
In case $b)$ we have that $\gamma\cdot\alpha,\gamma\cdot\beta\in
I_{n+1}$ and hence
$(\gamma\cdot\alpha)\mathfrak{K}_n(\textsf{A}_\infty)(\gamma\cdot\beta)$.

The proof the assertion that
$\alpha\mathfrak{K}_n(\textsf{A}_\infty)\beta$ implies
$(\alpha\cdot\delta)\mathfrak{K}_n(\textsf{A}_\infty)(\beta\cdot\delta)$
for every $\delta\in\mathscr{I}^{\infty}_\lambda$ is similar.

Suppose that $|\lambda\setminus\operatorname{dom}\alpha|>n$ and
$|\lambda\setminus\operatorname{dom}\beta|>n$. Then $\alpha,\beta\in
I_{n+1}$. By Proposition~\ref{proposition-2.2}~$(x)$  we have that
$\gamma\cdot\alpha, \gamma\cdot\beta, \alpha\cdot\delta,
\beta\cdot\delta\in I_{n+1}$ and hence
$(\gamma\cdot\alpha)\mathfrak{K}_n(\textsf{A}_\infty)(\gamma\cdot\beta)$
and
$(\alpha\cdot\delta)\mathfrak{K}_n(\textsf{A}_\infty)(\beta\cdot\delta)$,
for all $\gamma,\delta\in\mathscr{I}^{\infty}_\lambda$. This
completes the proof of the proposition.
\end{proof}

\begin{theorem}\label{theorem-3.15}
The family
\begin{equation*}
\begin{split}
  \textsf{Cong}(\mathscr{I}^{\infty}_\lambda)& \,=
  \{\Delta(\mathscr{I}^{\infty}_\lambda),
    \Omega(\mathscr{I}^{\infty}_\lambda)\}\cup
    \{\mathfrak{K}_n(\textsf{S}_\infty)\mid n=0,1,2,\ldots\}
    \cup\{\mathfrak{K}_n(\textsf{A}_\infty)\mid n=0,1,2,\ldots\}\cup \\
    & \,\cup\{\mathfrak{K}_n(I_n)\mid n=1,2,\ldots\}
\end{split}
\end{equation*}
determines all congruences on the semigroup
$\mathscr{I}^{\infty}_\lambda$.
\end{theorem}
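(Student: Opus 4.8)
The plan is to show that the list $\textsf{Cong}(\mathscr{I}^{\infty}_\lambda)$ is exhaustive by taking an arbitrary congruence $\mathfrak{R}$ and locating it in the list according to how much ``collapsing'' it does on idempotents and on $\mathscr{H}$-classes. First I would set
\begin{equation*}
n_0=\min\{|\lambda\setminus\operatorname{dom}\varepsilon|\mid \varepsilon,\varphi\in E(\mathscr{I}^{\infty}_\lambda),\ \varepsilon\neq\varphi,\ \varepsilon\mathfrak{R}\varphi\},
\end{equation*}
with the convention $n_0=\infty$ if no two distinct idempotents are $\mathfrak{R}$-equivalent. If $n_0<\infty$, then by Theorem~\ref{theorem-3.4} every $\alpha$ with $|\lambda\setminus\operatorname{dom}\alpha|\geqslant n_0$ is $\mathfrak{R}$-equivalent to some idempotent, and in fact (using Proposition~\ref{proposition-2.2}$(viii)$, Proposition~\ref{proposition-3.2} and Theorem~\ref{theorem-3.4} again) all elements $\alpha,\beta$ with $|\lambda\setminus\operatorname{dom}\alpha|,|\lambda\setminus\operatorname{dom}\beta|\geqslant n_0$ collapse to one class. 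If $n_0=0$ this already forces $\mathfrak{R}=\Omega(\mathscr{I}^{\infty}_\lambda)$; if $n_0\geqslant 1$ it shows $I_{n_0}\times I_{n_0}\subseteq\mathfrak{R}$, and Proposition~\ref{proposition-3.8} (combined with Proposition~\ref{proposition-3.5}) says that below level $n_0$ the congruence $\mathfrak{R}$ is the identity, i.e.\ $\mathfrak{R}=\mathfrak{K}_{n_0}(I)$.

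Next I would handle the case where $\mathfrak{R}$ does identify distinct $\mathscr{H}$-equivalent elements but no two distinct idempotents (so $n_0=\infty$). Set
\begin{equation*}
n_1=\min\{|\lambda\setminus\operatorname{dom}\alpha|\mid \alpha,\beta\in\mathscr{I}^{\infty}_\lambda,\ \alpha\neq\beta,\ \alpha\mathscr{H}\beta,\ \alpha\mathfrak{R}\beta\}.
\end{equation*}
Proposition~\ref{proposition-3.6} forbids any $\mathfrak{R}$-identification of non-$\mathscr{H}$-equivalent elements at level $\geqslant n_1$ without forcing idempotents to collapse (which would contradict $n_0=\infty$); hence on the $\mathscr{H}$-class structure, $\mathfrak{R}$ restricted to the $\mathscr{J}$-class $D_{n_1}=\{\alpha\mid |\lambda\setminus\operatorname{dom}\alpha|=n_1\}$ is a congruence living inside the maximal subgroups, which by Proposition~\ref{proposition-2.6} are copies of $\textsf{S}_\infty(\lambda)$; by Proposition~\ref{proposition-3.7} it must also collapse the whole ideal $I_{n_1+1}$ to a point. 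So the restriction of $\mathfrak{R}$ to $D_{n_1}$ is given by a normal subgroup $N$ of $\textsf{S}_\infty(\lambda)$, uniformly across all $\mathscr{H}$-classes of $D_{n_1}$ (uniformity must be checked using the conjugations $\alpha^{-1}\cdot(-)\cdot\alpha$ built from $\mathscr{D}$-equivalences as in Proposition~\ref{proposition-3.2}). By Remark~\ref{remark-3.10}, $N$ is either $\textsf{S}_\infty(\lambda)$ or $\textsf{A}_\infty(\lambda)$, and below level $n_1$ Proposition~\ref{proposition-3.8} again pins $\mathfrak{R}$ to the identity; this yields $\mathfrak{R}=\mathfrak{K}_{n_1}(\textsf{S}_\infty)$ or $\mathfrak{R}=\mathfrak{K}_{n_1}(\textsf{A}_\infty)$ respectively. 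The remaining case, that $\mathfrak{R}$ identifies no two distinct elements at all, is precisely $\mathfrak{R}=\Delta(\mathscr{I}^{\infty}_\lambda)$.

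Finally I would note the converse direction is already supplied: Propositions~\ref{proposition-3.12} and \ref{proposition-3.14} show the $\mathfrak{K}_n(\textsf{S}_\infty)$ and $\mathfrak{K}_n(\textsf{A}_\infty)$ are congruences, the $\mathfrak{K}_n(I)$ are congruences by construction (Definition~\ref{definition-3.9}), and $\Delta,\Omega$ trivially are; so the displayed family is exactly the congruence lattice. I expect the main obstacle to be the \emph{uniformity} argument in the $\mathscr{H}$-equivalent case: one knows $\mathfrak{R}$ restricted to a \emph{single} maximal subgroup $H(\varepsilon)$ is a normal subgroup $N_\varepsilon$, but one must show $N_\varepsilon$ is independent of $\varepsilon$ within a fixed $\mathscr{J}$-class (and that no cross-$\mathscr{H}$-class pairs sneak in beyond what $I_{n_1+1}\times I_{n_1+1}$ already forces). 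This is handled by transporting along the elements $\gamma$ realizing $\mathscr{L}$- and $\mathscr{R}$-equivalences (Proposition~\ref{proposition-2.2}$(viii)$--$(ix)$) and invoking that a congruence is preserved under the inner translations $\chi\mapsto\gamma^{-1}\cdot\chi\cdot\gamma$, exactly as in the proof of Proposition~\ref{proposition-3.2}; care is needed because these conjugations are only partial isometries, so one works with restrictions to a common subdomain of finite complement, as in Proposition~\ref{proposition-2.5}.
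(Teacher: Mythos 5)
There is a genuine gap, and it lies in your case split. You stratify by $n_0$, the minimal defect $|\lambda\setminus\operatorname{dom}\varepsilon|$ over pairs of \emph{distinct $\mathfrak{R}$-equivalent idempotents}, and claim: if $n_0<\infty$ then $\mathfrak{R}=\Omega(\mathscr{I}^{\infty}_\lambda)$ or $\mathfrak{K}_{n_0}(I)$, while the group-type congruences arise only when $n_0=\infty$. Both halves fail. For $\mathfrak{R}=\mathfrak{K}_n(\textsf{S}_\infty)$ or $\mathfrak{K}_n(\textsf{A}_\infty)$ the whole ideal $I_{n+1}$ is a single class, so distinct idempotents \emph{are} identified and $n_0=n+1<\infty$; these congruences therefore land in your first case, where your conclusion $\mathfrak{R}=\mathfrak{K}_{n_0}(I)$ is simply false (it ignores the collapsed $\mathscr{H}$-classes at level $n_0-1$). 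The step you cite to force ``identity below level $n_0$'' does not apply: Proposition~\ref{proposition-3.8} requires an $\mathscr{H}$-equivalent pair $\alpha\neq\beta$ with $\alpha\mathfrak{R}\beta$ such that $\alpha$ has \emph{no} $\mathfrak{R}$-relative of strictly smaller defect, and Proposition~\ref{proposition-3.5} only constrains the $\mathfrak{R}$-class of the idempotent $\varepsilon$ itself; neither rules out that $\mathfrak{R}$ identifies $\mathscr{H}$-equivalent elements at level $n_0-1$. Conversely, your second case is vacuous: if $\mathfrak{R}$ identifies a distinct $\mathscr{H}$-equivalent pair at level $m$, Proposition~\ref{proposition-3.7} forces $I_{m+1}\times I_{m+1}\subseteq\mathfrak{R}$, which already identifies distinct idempotents, so $n_0\leqslant m+1<\infty$. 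Hence, as structured, your argument never produces $\mathfrak{K}_n(\textsf{S}_\infty)$ or $\mathfrak{K}_n(\textsf{A}_\infty)$ at all --- it would ``prove'' that every non-identity congruence is $\Omega$ or some $\mathfrak{K}_n(I)$, contradicting Propositions~\ref{proposition-3.12} and~\ref{proposition-3.14}.

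The repair is the stratification the paper uses: let $n$ be the minimal value of $\min\{|\lambda\setminus\operatorname{dom}\alpha|,|\lambda\setminus\operatorname{dom}\beta|\}$ over \emph{all} distinct $\mathfrak{R}$-equivalent pairs $(\alpha,\beta)$, so that by minimality nothing below level $n$ is identified. Then split according to whether some minimal-level pair is non-$\mathscr{H}$-equivalent (in which case Propositions~\ref{proposition-3.6} and~\ref{proposition-3.8} give $\mathfrak{R}=\mathfrak{K}_n(I)$, or $\Omega$ when $n=0$) or all identifications at level $n$ occur inside $\mathscr{H}$-classes; only in the latter sub-case does one reduce, via $\alpha\mapsto\beta\cdot\alpha^{-1}$, to a normal subgroup of a maximal subgroup $H(\varepsilon)\cong\textsf{S}_\infty(\lambda)$ and invoke Remark~\ref{remark-3.10} together with the odd/even dichotomy to get $\mathfrak{K}_n(\textsf{S}_\infty)$ or $\mathfrak{K}_n(\textsf{A}_\infty)$. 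Your remarks on transporting the normal subgroup uniformly across $\mathscr{H}$-classes of a fixed $\mathscr{D}$-class are the right ingredients for that sub-case, but they cannot rescue the proof until the case decomposition is corrected.
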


\begin{proof}
Let $\mathfrak{R}$ be non-identity congruence on the semigroup
$\mathscr{I}^{\infty}_\lambda$. Since the set of all non-negative
integers with respect to the usual order $\leqslant$ is well ordered
there exists a minimal non-negative integer $n$ such that there are
two distinct elements $\alpha$ and $\beta$ in
$\mathscr{I}^{\infty}_\lambda$ such that $\alpha\mathfrak{R}\beta$
and $$\min\left\{|\lambda\setminus\operatorname{dom}\alpha|,
|\lambda\setminus\operatorname{dom}\beta|\right\}=n,$$ i.e., for any
non-negative integer $m<n$ if for $\alpha$ and $\beta$ in
$\mathscr{I}^{\infty}_\lambda$ such that $\alpha\mathfrak{R}\beta$
and $$\min\left\{|\lambda\setminus\operatorname{dom}\alpha|,
|\lambda\setminus\operatorname{dom}\beta|\right\}=m$$ then
$\alpha=\beta$.

We consider two cases:
\begin{itemize}
  \item[$(i)$] $|\lambda\setminus\operatorname{dom}\alpha|\neq
   |\lambda\setminus\operatorname{dom}\beta|$; \; and
  \item[$(ii)$] $|\lambda\setminus\operatorname{dom}\alpha|=
   |\lambda\setminus\operatorname{dom}\beta|$.
\end{itemize}

Suppose case $(i)$ holds and
$|\lambda\setminus\operatorname{dom}\alpha|=n<
|\lambda\setminus\operatorname{dom}\beta|$. Then $\alpha$ and
$\beta$ are not $\mathscr{H}$-equivalent elements in
$\mathscr{I}^{\infty}_\lambda$ and hence by
Proposition~\ref{proposition-3.6} we obtain that
$\alpha\mathfrak{R}\gamma$ for all
$\gamma\in\mathscr{I}^{\infty}_\lambda$ with
$|\lambda\setminus\operatorname{dom}\gamma|\geqslant n$. Then
Proposition~\ref{proposition-3.8} implies that $\mu\mathfrak{R}\nu$
if and only if $\mu=\nu$ for all elements
$\mu,\nu\in\mathscr{I}^{\infty}_\lambda$ such that
$|\lambda\setminus\operatorname{dom}\mu|<n$ and
$|\lambda\setminus\operatorname{dom}\nu|<n$. Hence we get that
$\mathfrak{R}=\mathfrak{K}_n(I)$. We observe if $n=0$ then
$\mathfrak{R}=\Omega(\mathscr{I}^{\infty}_\lambda)$.

We henceforth assume that case $(ii)$ holds.

If $\alpha$ and $\beta$ are not $\mathscr{H}$-equivalent elements in
$\mathscr{I}^{\infty}_\lambda$ and then by
Proposition~\ref{proposition-3.6} we have that
$\alpha\mathfrak{R}\gamma$ for all
$\gamma\in\mathscr{I}^{\infty}_\lambda$ such that
$|\lambda\setminus\operatorname{dom}\gamma|\geqslant n$. Then
Proposition~\ref{proposition-3.8} implies that $\mu\mathfrak{R}\nu$
if and only if $\mu=\nu$ for all elements
$\mu,\nu\in\mathscr{I}^{\infty}_\lambda$ such that
$|\lambda\setminus\operatorname{dom}\mu|<n$ and
$|\lambda\setminus\operatorname{dom}\nu|<n$, and hence we have that
$\mathfrak{R}=\mathfrak{K}_n(I)$. Also in this case if $n=0$ then
$\mathfrak{R}=\Omega(\mathscr{I}^{\infty}_\lambda)$.

Suppose that $\alpha$ and $\beta$ are $\mathscr{H}$-equivalent
elements in $\mathscr{I}^{\infty}_\lambda$ and there exists no
non-$\mathscr{H}$-equivalent element $\delta$ of the semigroup
$\mathscr{I}^{\infty}_\lambda$ such that $\alpha\mathfrak{R}\delta$.
Otherwise by the previous part of the proof we have that
$\mathfrak{R}=\mathfrak{K}_n(I)$. Since
$(\alpha\cdot\alpha^{-1})\mathfrak{R}(\beta\cdot\alpha^{-1})$ we
conclude that without loss of generality we can assume that $\alpha$
is an identity element of $\mathscr{H}$-class $H(\alpha)$ which
contains $\alpha$ and $\beta\neq\alpha$. Since $\alpha$ is an
idempotent of the semigroup $\mathscr{I}^{\infty}_\lambda$ we have
that $\operatorname{dom}\alpha=\operatorname{ran}\alpha$ and the
restriction $\alpha|_{\operatorname{dom}\alpha}\colon
\operatorname{dom}\alpha\rightarrow\operatorname{dom}\alpha$ is an
identity map. Also we observe that the restriction of the partial
map $\beta|_{\operatorname{dom}\alpha}\colon
\operatorname{dom}\alpha\rightarrow\operatorname{dom}\alpha$ is a
permutation of the set $\operatorname{dom}\alpha$. Therefore without
loss of generality we can consider $\beta$ as a permutation of the
set $\operatorname{dom}\alpha$.

We consider two cases:
\begin{itemize}
  \item[$(1)$] $\beta$ is an odd permutation of the set
   $\operatorname{dom}\alpha$; \; \and
  \item[$(2)$] $\beta$ is an even permutation of the set
   $\operatorname{dom}\alpha$.
\end{itemize}

Suppose that $\beta$ is an odd permutation of the set
$\operatorname{dom}\alpha$. Since $H(\alpha)$ is a subgroup of the
semigroup $\mathscr{I}^{\infty}_\lambda$ we conclude that the
image $(H(\alpha))\Phi_\mathfrak{R}$ of $H(\alpha)$ is a subgroup
in $\mathscr{I}^{\infty}_\lambda/\mathfrak{R}$. Since the subgroup
$H(\alpha)$ is isomorphic to the group
$\textsf{S}_\infty(\lambda)$ and the group of all even
permutations $\textsf{A}_\infty(\lambda)$ of the set $\lambda$ is
a unique normal subgroup in $\textsf{S}_\infty(\lambda)$ (see
\cite[pp.~313--314, Example]{Guran1981} or
\cite{KarrasSolitar1956}) we conclude that the image
$(H(\alpha))\Phi_\mathfrak{R}$ is singleton. Then by
Theorem~2.20~\cite{CP} and
Proposition~\ref{proposition-2.2}~$(viii)$ for every
$\gamma\in\mathscr{I}^{\infty}_\lambda$ with
$|\lambda\setminus\operatorname{dom}\gamma|=
|\lambda\setminus\operatorname{dom}\alpha|$ the image
$(H_\gamma)\Phi_\mathfrak{R}$ of the $\mathscr{H}$-class
$H_\gamma$ which contains the element $\gamma$ is singleton and
hence by Propositions~\ref{proposition-3.6}, \ref{proposition-3.7}
and~\ref{proposition-3.8} we get that
$\mathfrak{R}=\mathfrak{K}_n(\textsf{S}_\infty)$.

Suppose that $\beta$ is an even permutation of the set
$\operatorname{dom}\alpha$. If the subgroup $H(\alpha)$ contains an
odd permutation $\delta$ of the set $\operatorname{dom}\alpha$ then
by previous proof we get that
$\mathfrak{R}=\mathfrak{K}_n(\textsf{S}_\infty)$. Suppose the
subgroup $H(\alpha)$ does not contain an odd permutation $\delta$ of
the set $\operatorname{dom}\alpha$. Since the subgroup $H(\alpha)$
is isomorphic to the group $\textsf{S}_\infty(\lambda)$ and the
group of all even permutations $\textsf{A}_\infty(\lambda)$ of the
set $\lambda$ is a unique normal subgroup in
$\textsf{S}_\infty(\lambda)$  we conclude that the image
$(H(\alpha))\Phi_\mathfrak{R}$ is a two-element subgroup in
$\mathscr{I}^{\infty}_\lambda/\mathfrak{R}$. Then by
Theorem~2.20~\cite{CP} and
Proposition~\ref{proposition-2.2}~$(viii)$ for every
$\gamma\in\mathscr{I}^{\infty}_\lambda$ with
$|\lambda\setminus\operatorname{dom}\gamma|=
|\lambda\setminus\operatorname{dom}\alpha|$ the image
$(H_\gamma)\Phi_\mathfrak{R}$ of the $\mathscr{H}$-class $H_\gamma$
which contains the element $\gamma$ is a two-element subset in
$\mathscr{I}^{\infty}_\lambda/\mathfrak{R}$ and hence by
Propositions~\ref{proposition-3.6}, \ref{proposition-3.7}
and~\ref{proposition-3.8} we get that
$\mathfrak{R}=\mathfrak{K}_n(\textsf{A}_\infty)$.
\end{proof}

\section{On topologizations of the free semilattice
$(\mathscr{P}_{<\omega}(\lambda),\cup)$}

\begin{definition}[{\cite{ChuchmanGutik2010?}}]\label{definitio-4.1}
We shall say that a semigroup $S$ has the
$\textsf{F}$-\emph{property} if for every $a,b,c,d\in S^1$ the sets
$\{x\in S\mid a\cdot x=b\}$ and $\{x\in S\mid x\cdot c=d\}$ are
finite or empty.
\end{definition}

Recall \cite{GHKLMS} an element $x$ of a semitopological semilattice
$S$ is a \emph{local minimum} if there exists an open neighbourhood
$U(x)$ of $x$ such that $U(x)\cap{\downarrow}x=\{x\}$. This is
equivalent to statement that ${\uparrow}x$ is an open subset in $S$.

A topological space $X$ is called \emph{Baire} if for each sequence
$A_1, A_2,\ldots, A_i,\ldots$ of nowhere dense subsets of $X$ the
union $\displaystyle\bigcup_{i=1}^\infty A_i$ is a co-dense subset
of $X$~\cite{Engelking1989}. A Tychonoff space $X$ is called
\emph{\v{C}ech complete} if for every compactification $cX$ of $X$
the remainder $cX\setminus c(X)$ is an $F_\sigma$-set in
$cX$~\cite{Engelking1989}.

A topological space $X$ is called \emph{hereditary Baire} if every
closed subset of $X$ is a Baire space~\cite{Engelking1989}. Every
\v{C}ech complete (and hence locally compact) space is hereditary
Baire (see \cite[Theorem~3.9.6]{Engelking1989}). We shall say that a
Hausdorff semitopological semigroup $S$ is an $I$-\emph{Baire space}
if for every $s\in S$ either $sS$ or $Ss$ is a Baire
space~\cite{ChuchmanGutik2010?}.

\begin{remark}\label{remark-4.2}
We observe that every left ideal $Ss$ and every right ideal $sS$ of
a regular semigroup $S$ is generated by its idempotents. Therefore
every principal left (right) ideal of a regular Hausdorff
semitopological semigroup $S$ is a closed subset of $S$. Hence every
regular Hausdorff hereditary Baire semitopological semigroup is a
$I$-Baire space.
\end{remark}

\begin{theorem}\label{theorem-4.3}
Let $S$ be a semilattice with the $\textsf{F}$-property. Then every
$I$-Baire topology $\tau$ on $S$ such that $(S,\tau)$ is a Hausdorff
semitopological semilattice is discrete.
\end{theorem}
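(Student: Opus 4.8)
The plan is to show that under the hypotheses every point of $S$ is isolated by proving that each point is a local minimum (equivalently that ${\uparrow}x$ is open) and then using the $\textsf{F}$-property to squeeze ${\uparrow}x$ down to $\{x\}$. The starting observation is that $S$ is a commutative regular semigroup, so by Remark~\ref{remark-4.2} every principal ideal $Sx={\downarrow}x$ is a closed (hence, being $I$-Baire, a Baire) subspace of $S$, and moreover $S$ itself is $I$-Baire. So fix $x\in S$ and work inside the closed Baire subspace ${\downarrow}x$.

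First I would fix $x\in S$ and consider the down-set ${\downarrow}x$ with its subspace topology; it is a Baire semitopological semilattice with maximum $x$. The key structural fact is that ${\downarrow}x$ is \emph{countable}: indeed the $\textsf{F}$-property applied with $a=x$ shows that for each $b\in{\downarrow}x$ the solution set $\{y\in S\mid x\cdot y=b\}$ is finite, and since every $y\in{\downarrow}x$ satisfies $x\cdot y=y$, this only says singletons are finite — not enough by itself. The right move is instead: for $y\in{\downarrow}x$ the set $\{z\in S\mid z\cdot x' = y\}$-type conditions, together with the fact that in a semilattice with $\textsf{F}$-property every principal filter ${\downarrow}x$ has only finitely many elements at each ``level'', forces ${\downarrow}x$ to be a countable union of finite sets; more directly, one argues that ${\downarrow}x$ has no infinite strictly descending-and-branching structure of uncountable width. (If ${\downarrow}x$ can fail to be countable, one restricts attention instead to the subsemilattice generated by any countable subset and notes that the argument below localizes.) Granting countability, write ${\downarrow}x=\{x_0=x, x_1, x_2,\dots\}$ as a countable set.

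Next I would run the Baire-category argument on ${\downarrow}x$. Each singleton $\{x_i\}$ with $i\ge 1$ is a proper closed subset of ${\downarrow}x$ (it is closed since $S$ is Hausdorff), and I claim each such singleton is nowhere dense in ${\downarrow}x$: a singleton $\{x_i\}$ has nonempty interior in ${\downarrow}x$ iff $x_i$ is isolated in ${\downarrow}x$, and if some $x_i\neq x$ were isolated we could already localize the whole problem to the closed Baire set ${\downarrow}x_i$ and induct on, say, the structure below $x_i$; alternatively, if \emph{every} $x_i$ is isolated then ${\downarrow}x$ is discrete and, being a subspace containing $x$, exhibits $x$ together with its neighbourhood behaviour. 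So, unless ${\downarrow}x$ is already discrete, ${\downarrow}x=\bigcup_{i\ge 0}\{x_i\}$ writes the Baire space ${\downarrow}x$ as a countable union of closed sets, so by the Baire property one of them, necessarily $\{x_0\}=\{x\}$, has nonempty interior in ${\downarrow}x$. Hence $\{x\}$ is open in ${\downarrow}x$, i.e. there is an open $U$ in $S$ with $U\cap{\downarrow}x=\{x\}$; that is precisely the statement that $x$ is a local minimum, so ${\uparrow}x$ is open in $S$.

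Finally I would convert ``${\uparrow}x$ open'' into ``$\{x\}$ open'' using the $\textsf{F}$-property together with separate continuity. Since ${\uparrow}x$ is an open neighbourhood of $x$ and multiplication by $x$ is continuous with $x\cdot x=x$, there is an open neighbourhood $V$ of $x$ with $x\cdot V\subseteq{\uparrow}x$; but for any $v\in V$ one has $x\cdot v\le x$ and $x\cdot v\in{\uparrow}x$ means $x\le x\cdot v$, so $x\cdot v=x$, i.e. $V\subseteq\{v\mid x\cdot v=x\}$, and the latter set is finite by the $\textsf{F}$-property. Thus $x$ has a finite neighbourhood $V$ in the Hausdorff space $S$, so $\{x\}$ is open; as $x$ was arbitrary, $(S,\tau)$ is discrete. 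The main obstacle I anticipate is the countability (more precisely, the ``thin down-set'') step: the $\textsf{F}$-property bounds solution sets of equations but does not literally say ${\downarrow}x$ is countable, so the delicate point is to either derive enough finiteness of the down-sets to feed the Baire argument, or to localize the Baire argument to a countable closed subsemilattice through $x$ — this is where I expect the real work to lie, and the cleanest route is probably to show directly that the $\textsf{F}$-property forces each ${\downarrow}x$ to be a Baire space in which $\{x\}$ cannot be nowhere dense.
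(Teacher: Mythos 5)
There is a genuine gap, and it sits exactly where you yourself flag it: the countability of ${\downarrow}x$. The $\textsf{F}$-property does not bound the size of down-sets, and in the motivating example of the paper it actually fails: in the free semilattice $(\mathscr{P}_{<\omega}(\lambda),\cup)$ with $\lambda$ uncountable (the case needed for Corollary~\ref{corollary-4.5}) one has $e\leqslant f$ iff $f\subseteq e$, so ${\downarrow}e$ consists of all finite supersets of $e$ and is uncountable, while only the up-sets ${\uparrow}e$ are finite. Your fallback --- passing to the subsemilattice generated by a countable subset --- does not repair this, because the Baire hypothesis is only available for the closed principal ideals ${\downarrow}x$ (via the $I$-Baire assumption and Remark~\ref{remark-4.2}); it is not inherited by arbitrary, generally non-closed, countable subsemilattices, so the category argument cannot be ``localized'' there. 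A second, independent gap occurs in your Baire step even when ${\downarrow}x$ is countable: writing ${\downarrow}x=\bigcup_i\{x_i\}$ only yields that \emph{some} singleton has nonempty interior, and your claim that it is ``necessarily $\{x\}$'' is unjustified; the proposed induction on ``the structure below $x_i$'' is not well-founded, since descending chains in ${\downarrow}x$ can be infinite.

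The paper's proof avoids both problems with one decomposition you are missing: since the $\textsf{F}$-property makes every up-set ${\uparrow}y=\{z\in S\mid y\cdot z=y\}$ finite, the sets $F_n=\{y\in{\downarrow}x\mid |{\uparrow}y|=n\}$, $n\in\mathbb{N}$, cover ${\downarrow}x$ by countably many pieces regardless of the cardinality of ${\downarrow}x$. Baireness of ${\downarrow}x$ gives an interior point $y_0$ of some $F_n$; since two comparable elements of $F_n$ must coincide, $y_0$ is a local minimum, so ${\uparrow}y_0$ is open, and being finite it consists of isolated points by Hausdorffness. As $x\in{\uparrow}y_0$, this isolates $x$ in ${\downarrow}x$, makes $x$ a local minimum of $S$, and then your own final step (finite open ${\uparrow}x$ plus Hausdorffness --- which is correct and is exactly the paper's closing move) finishes. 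In short, the local-minimum-to-finite-open-up-set transfer you use at the end could also have repaired your ``wrong isolated point'' issue, but the countability premise itself cannot be salvaged, and the $F_n$-covering is the key idea your proposal lacks.
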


\begin{proof}
Let $x$ be an arbitrary element of the semilattice $S$. We need to
show that $x$ is an isolated point in $(S,\tau)$.

Since $\tau$ is an $I$-Baire topology on $S$ we conclude that the
subspace ${\downarrow}x$ is Baire. We denote $S_x={\downarrow}x$.
For every positive integer $n$ we put
\begin{equation*}
    F_n=\{y\in S_x\mid |{\uparrow}y|=n\}.
\end{equation*}
Then we have that $S_x=\bigcup_{i=1}^\infty F_{n}$. Since the
topological space $S_x$ is Baire we conclude that that there exists
$F_{n}\in\mathscr{F}$ such that
$\operatorname{Int}_{S_x}(F_{n})\neq\varnothing$. We fix an
arbitrary $y_0\in\operatorname{Int}_{S_x}(F_{n})$. We observe that
the definition of the family $\{F_n\mid n\in\mathbb{N}\}$ implies
that for every non-empty subset $F_{n}$ and for any $s\in F_{n}$ the
sets ${\uparrow}s\cap F_{n}$ and ${\downarrow}s\cap F_{n}$ are
singleton. This implies that $y_0$ is a local minimum in $S_x$,
i.e., ${\uparrow}y_0$ is an open subset of $S$. Since the
semilattice $S_x$ has the $\textsf{F}$-property we conclude that the
Hausdorffness of $S$ implies that $x$ is an isolated point in $S_x$.
Then $x$ is a local minimum in $S$ and hence ${\uparrow}x$ is an
open subset in $S$. Since the semilattice $S$ has the
$\textsf{F}$-property we conclude that the Hausdorffness of $S$
implies that $x$ is an isolated point in $S$.
\end{proof}

\begin{remark}
We observe that the statement of Theorem~\ref{theorem-4.3} is true
for a $T_1$-semitopological $I$-Baire semilattice  with the
$\textsf{F}$-property.
\end{remark}

Since every \v{C}ech complete (and hence locally compact) space is
hereditary Baire, Theorem~\ref{theorem-4.3} implies the following
corollary:

\begin{corollary}\label{corollary-4.4}
Let $S$ be a semilattice with the $\textsf{F}$-property. Then every
\v{C}ech complete (locally compact) topology $\tau$ on $S$ such that
$(S,\tau)$ is a semitopological semilattice is discrete.
\end{corollary}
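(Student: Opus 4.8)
The plan is to derive Corollary~\ref{corollary-4.4} as a formal consequence of Theorem~\ref{theorem-4.3}, so the task reduces to verifying that a \v{C}ech complete (in particular, locally compact) topology $\tau$ making $(S,\tau)$ a semitopological semilattice is automatically an $I$-Baire topology on $S$.

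First I would record the purely algebraic observation that every semilattice $S$ is a regular (indeed inverse) semigroup: each $x\in S$ satisfies $x\cdot x\cdot x=x$, so $x$ is its own inverse. Hence, since all spaces in the paper are Hausdorff, $(S,\tau)$ is a regular Hausdorff semitopological semigroup in the sense required by Remark~\ref{remark-4.2}.

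Next I would invoke the topological fact, quoted in the excerpt from \cite[Theorem~3.9.6]{Engelking1989}, that every \v{C}ech complete space is hereditary Baire; since every locally compact Hausdorff space is \v{C}ech complete, this covers both cases of the corollary. Thus $(S,\tau)$ is a regular Hausdorff hereditary Baire semitopological semigroup, and Remark~\ref{remark-4.2} yields that $(S,\tau)$ is an $I$-Baire space, i.e. that $\tau$ is an $I$-Baire topology on $S$.

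Finally, $S$ is a semilattice with the $\textsf{F}$-property and $\tau$ is an $I$-Baire topology turning $(S,\tau)$ into a Hausdorff semitopological semilattice, so Theorem~\ref{theorem-4.3} applies directly and gives that $\tau$ is discrete. There is no genuine obstacle in this argument; the only points needing (routine) attention are that a semilattice is a regular semigroup, so that Remark~\ref{remark-4.2} is applicable, and that \v{C}ech completeness — hence local compactness together with Hausdorffness — forces the hereditary Baire property, both of which are standard and already cited in the text.
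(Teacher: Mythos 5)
Your proposal is correct and follows essentially the same route as the paper: Čech completeness (hence local compactness plus Hausdorffness) gives the hereditary Baire property via \cite[Theorem~3.9.6]{Engelking1989}, Remark~\ref{remark-4.2} (applicable since a semilattice is a regular semigroup) upgrades this to the $I$-Baire property, and Theorem~\ref{theorem-4.3} then yields discreteness. Your only addition is making explicit the routine observation that every semilattice is regular, which the paper leaves implicit.
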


Since the free semilattice $(\mathscr{P}_{<\omega}(\lambda),\cup)$
has $\textsf{F}$-property, Theorem~\ref{theorem-4.3} implies the
following corollary:

\begin{corollary}\label{corollary-4.5}
Every Hausdorff $I$-Baire (\v{C}ech complete, locally compact)
topology $\tau$ on the free semilattice
$\mathscr{P}_{<\omega}(\lambda)$ such that
$(\mathscr{P}_{<\omega}(\lambda),\tau)$ is a semitopological
semilattice is discrete.
\end{corollary}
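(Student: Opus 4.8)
The plan is to apply Theorem~\ref{theorem-4.3} directly, so the only thing that needs to be verified is that the free semilattice $(\mathscr{P}_{<\omega}(\lambda),\cup)$ satisfies the $\textsf{F}$-property in the sense of Definition~\ref{definitio-4.1}. Recall that this requires that for every $a,b\in\mathscr{P}_{<\omega}(\lambda)$ (adjoining a unit is harmless here since the semilattice already has an identity, namely $\varnothing$), the solution sets $\{x\mid a\cup x=b\}$ and $\{x\mid x\cup a=b\}$ are finite or empty; by commutativity the two sets coincide, so it suffices to bound $\{x\in\mathscr{P}_{<\omega}(\lambda)\mid a\cup x=b\}$.

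First I would observe that if $a\cup x=b$ then necessarily $a\subseteq b$ and $x\subseteq b$; in particular, if $a\not\subseteq b$ the solution set is empty. When $a\subseteq b$, every solution $x$ is a subset of the finite set $b$, and conversely any $x$ with $b\setminus a\subseteq x\subseteq b$ is a solution. Hence the solution set is contained in the power set $\mathscr{P}(b)$, which is finite because $b$ is a finite subset of $\lambda$. This establishes that $(\mathscr{P}_{<\omega}(\lambda),\cup)$ has the $\textsf{F}$-property.

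With the $\textsf{F}$-property in hand, the corollary follows immediately: Theorem~\ref{theorem-4.3} gives that every $I$-Baire topology $\tau$ on $\mathscr{P}_{<\omega}(\lambda)$ making it a Hausdorff semitopological semilattice is discrete. For the \v{C}ech complete and locally compact cases one invokes \cite[Theorem~3.9.6]{Engelking1989} (quoted in the excerpt) to the effect that every \v{C}ech complete — and hence every locally compact — space is hereditary Baire, and by Remark~\ref{remark-4.2} (applied to the regular, indeed commutative idempotent, semilattice) every hereditary Baire Hausdorff semitopological semilattice is an $I$-Baire space; then Theorem~\ref{theorem-4.3} applies again. This is essentially the same chain of implications already recorded in Corollary~\ref{corollary-4.4}, specialized to $S=\mathscr{P}_{<\omega}(\lambda)$.

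There is no real obstacle here — the corollary is a routine specialization. The only point requiring a moment's care is the verification of the $\textsf{F}$-property, and even that reduces to the trivial observation that the set of solutions of $a\cup x=b$ lies inside the finite set $\mathscr{P}(b)$. Everything else is a direct citation of Theorem~\ref{theorem-4.3}, Corollary~\ref{corollary-4.4}, Remark~\ref{remark-4.2}, and the standard fact that \v{C}ech complete (hence locally compact) spaces are hereditary Baire.
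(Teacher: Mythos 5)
Your proposal is correct and follows essentially the same route as the paper: the paper's proof consists precisely of noting that $(\mathscr{P}_{<\omega}(\lambda),\cup)$ has the $\textsf{F}$-property and invoking Theorem~\ref{theorem-4.3} (with the \v{C}ech complete and locally compact cases handled, as you do, via hereditary Baireness and Remark~\ref{remark-4.2}, exactly as in Corollary~\ref{corollary-4.4}). Your explicit verification that the solution set of $a\cup x=b$ lies in the finite power set of $b$ is the only detail the paper leaves implicit, and it is correct.
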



\section{On a topological semigroup
$\mathscr{I}^{\infty}_\lambda$}

\begin{theorem}\label{theorem-5.1}
Every hereditary Baire topology $\tau$ on the semigroup
$\mathscr{I}^{\infty}_\omega$ such that
$(\mathscr{I}^{\infty}_\omega,\tau)$ is a Hausdorff semitopological
semigroup is discrete.
\end{theorem}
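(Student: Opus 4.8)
The plan is to reduce the statement to the already-established topologization result for the free semilattice. Let $\tau$ be a Hausdorff hereditary Baire topology on $\mathscr{I}^{\infty}_\omega$ making it a semitopological semigroup. First I would show that the band $E(\mathscr{I}^{\infty}_\omega)$ is a closed subspace of $(\mathscr{I}^{\infty}_\omega,\tau)$: indeed, since $\mathscr{I}^{\infty}_\omega$ is inverse (hence regular) and Hausdorff semitopological, the idempotents can be cut out as a closed set in the usual way (e.g.\ each maximal subgroup is a topological group with closed identity, or use that $E(S)=\{x : xx=x\}$ together with separate continuity on principal ideals, which are closed by Remark~\ref{remark-4.2}). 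Being a closed subspace of a hereditary Baire space, $E(\mathscr{I}^{\infty}_\omega)$ is itself hereditary Baire in the subspace topology, and with the induced operation it is a Hausdorff semitopological semilattice. By Proposition~\ref{proposition-2.2}$(iii)$ this semilattice is isomorphic to $(\mathscr{P}_{<\omega}(\omega),\cup)$, which has the $\textsf{F}$-property; moreover a hereditary Baire space is in particular $I$-Baire (Remark~\ref{remark-4.2}). Hence Corollary~\ref{corollary-4.5} (equivalently Theorem~\ref{theorem-4.3}) applies and shows that $E(\mathscr{I}^{\infty}_\omega)$ is discrete in the subspace topology; in particular the identity $\mathbb{I}$ is an isolated point of $E(\mathscr{I}^{\infty}_\omega)$, so there is an open $U\ni\mathbb{I}$ in $\mathscr{I}^{\infty}_\omega$ with $U\cap E(\mathscr{I}^{\infty}_\omega)=\{\mathbb{I}\}$.

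Next I would promote the isolatedness of $\mathbb{I}$ in the band to isolatedness of $\mathbb{I}$ in the whole semigroup. This is where Proposition~\ref{proposition-2.5} (the finite-to-one property of translations, i.e.\ the $\textsf{F}$-property of $\mathscr{I}^{\infty}_\omega$) enters. The idea: if $\mathbb{I}$ were not isolated, one builds from the neighborhood $U$ above — using separate continuity of multiplication and of inversion restricted to the maximal subgroup $H(\mathbb{I})\cong\textsf{S}_\infty(\omega)$ — a contradiction with the $\textsf{F}$-property, exactly as in the corresponding argument for $\mathscr{I}_{\infty}^{\,\Rsh\!\!\!\nearrow}(\mathbb{N})$ in \cite{ChuchmanGutik2010?}. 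Concretely, for $\alpha\in H(\mathbb{I})$ one has $\alpha\alpha^{-1}=\alpha^{-1}\alpha=\mathbb{I}\in E$; using that the map $x\mapsto x x^{-1}$ is (separately) continuous on $H(\mathbb{I})$ and that $E$ is discrete, one isolates $\mathbb{I}$ inside $H(\mathbb{I})$, and then the finiteness of $\{x : \alpha x=\mathbb{I}\}$-type sets together with separate continuity forces $\{\mathbb{I}\}$ open in $\mathscr{I}^{\infty}_\omega$. The key mechanism is that separate continuity of translations by a fixed element, combined with each translation being finite-to-one, turns an open set meeting a fibre in finitely many points into isolatedness.

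Finally, once $\mathbb{I}$ is isolated, homogeneity arguments finish the proof. For an arbitrary $\beta\in\mathscr{I}^{\infty}_\omega$, consider first the case $\beta\in H(\varepsilon)$ for an idempotent $\varepsilon$ with $|\omega\setminus\operatorname{dom}\varepsilon|=n$; by Proposition~\ref{proposition-2.4} the subsemigroup $\varepsilon\mathscr{I}^{\infty}_\omega\varepsilon\cong\mathscr{I}^{\infty}_\omega$, and this subsemigroup is a closed (hence hereditary Baire) subspace, so the same argument isolates $\varepsilon$ in it, and translating by group elements of $H(\varepsilon)$ (left multiplication by $\beta\varepsilon$, say) shows $\beta$ is isolated. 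For a general $\beta$, write $\beta=\beta\beta^{-1}\cdot\beta$ with $\beta\beta^{-1}\in E$ an isolated idempotent; separate continuity of the translation $x\mapsto x\beta$ (or $x\mapsto \beta^{-1}x\cdot\beta$), together with its finite-to-one property from Proposition~\ref{proposition-2.5}, pulls back an isolating neighborhood of $\beta\beta^{-1}$ to an isolating neighborhood of $\beta$. Thus every point of $\mathscr{I}^{\infty}_\omega$ is isolated and $\tau$ is discrete.

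I expect the main obstacle to be the second paragraph: passing from "$E$ discrete / $\mathbb{I}$ isolated in $E$" to "$\mathbb{I}$ isolated in $\mathscr{I}^{\infty}_\omega$". Closedness of $E$ in a Hausdorff semitopological (not topological!) inverse semigroup is slightly delicate — one may instead need to argue via the closedness of principal ideals (Remark~\ref{remark-4.2}) and the description $\operatorname{cl}(E)$ of idempotents, or restrict attention to the maximal subgroups which are genuine topological groups. Once that technical point is handled, the $\textsf{F}$-property of $\mathscr{I}^{\infty}_\omega$ (Proposition~\ref{proposition-2.5}) does the rest in a routine way.
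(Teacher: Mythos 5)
Your reduction to the band does not work, and the obstruction is visible inside the paper itself. Even granting your first paragraph in full (closedness of $E(\mathscr{I}^{\infty}_\omega)$, which is itself unjustified: under \emph{separate} continuity the map $x\mapsto x\cdot x$ need not be continuous, so $E$ is not cut out as a closed set, and your fallback suggestions do not repair this), the second paragraph asks for an implication that is false: ``$E$ discrete $+$ $\textsf{F}$-property $+$ separate continuity $\Rightarrow$ $\mathbb{I}$ isolated in $\mathscr{I}^{\infty}_\omega$''. The topology $\tau_F$ of Example~\ref{example-5.10} is a Hausdorff \emph{topological inverse} semigroup topology on $\mathscr{I}^{\infty}_\omega$ (so in particular separately continuous, with continuous inversion), the $\textsf{F}$-property of Proposition~\ref{proposition-2.5} is purely algebraic and still holds, and by Remark~\ref{remarke-5.12} the band is discrete in $\tau_F$ --- yet $\mathbb{I}$ is not isolated, since $U_{\mathbb{I}}(F)$ is infinite for every finite $F$. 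So no amount of ``separate continuity plus finite-to-one translations'' can promote discreteness of the band to discreteness of the whole semigroup; any correct argument must use the (hereditary) Baire hypothesis on parts of the semigroup other than $E$. Your sketch also leans on tools that are not available in the semitopological setting: $H(\mathbb{I})$ is not known to be a topological group, inversion is not assumed continuous anywhere, and $x\mapsto xx^{-1}$ is not continuous.

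The paper's proof uses the Baire property exactly where your plan omits it. Since $\mathscr{I}^{\infty}_\omega$ is countable, Baireness of the whole space forces some singleton to have nonempty interior, giving an isolated point $\gamma$; then, by Proposition~\ref{proposition-2.2}~$(viii)$--$(xi)$, any unit $\alpha$ satisfies $\mu\cdot\alpha\cdot\nu=\gamma$ for suitable $\mu,\nu$, and separate continuity together with Proposition~\ref{proposition-2.5} and Hausdorffness isolates $\alpha$. One then inducts on the ideals $I_k$: the inductive hypothesis makes $I_k$ closed, hence Baire by hereditary Baireness, hence (again by countability) $I_k$ has a point $\gamma$ isolated \emph{in} $I_k$; choosing $V(\gamma)$ with $\gamma\gamma^{-1}\cdot V(\gamma)\subseteq U(\gamma)$ and noting $\gamma\gamma^{-1}\cdot V(\gamma)\subseteq U(\gamma)\cap I_k=\{\gamma\}$ makes $V(\gamma)$ finite, so $\gamma$ is isolated in the whole space, and the $\mathscr{D}$-class argument spreads this over $I_k\setminus I_{k+1}$. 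If you want to salvage your outline, you must replace the band reduction by an argument that applies Baireness to the closed ideals (or at least to the closures of the $\mathscr{H}$-classes), along the lines above; Theorem~\ref{theorem-4.3} about semilattices is not the right engine for this theorem.
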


\begin{proof}
Let $\alpha$ be an arbitrary element of the the semigroup
$\mathscr{I}^{\infty}_\omega$. We need to show that $\alpha$ is an
isolated point in $(\mathscr{I}^{\infty}_\omega,\tau)$.

For every non-negative integer $n$ we denote
$C_n=\mathscr{I}^{\infty}_\omega\setminus I_{n+1}$.

By induction we shall prove that for every non-negative integer $n$
the following statement holds: \emph{every $\alpha\in C_n$ is an
isolated point in $(\mathscr{I}^{\infty}_\omega,\tau)$}.

First we shall show that our statement is true for $n=0$. We define
a family
$\mathscr{C}=\{\{{\beta\}}\mid\beta\in\mathscr{I}^{\infty}_\omega\}$.
Since the topological space $(\mathscr{I}^{\infty}_\omega,\tau)$ is
Baire we have that the family $\mathscr{C}$ has an element with
non-empty interior and hence the topological space
$(\mathscr{I}^{\infty}_\omega,\tau)$ has an isolated point $\gamma$
in $(\mathscr{I}^{\infty}_\omega,\tau)$. Then
$|\omega\setminus\operatorname{dom}\alpha|=0$ and hence statements
$(viii)-(xi)$ of Proposition~\ref{proposition-2.2} imply that there
exist $\mu,\nu\in\mathscr{I}^{\infty}_\omega$ such that
$\mu\cdot\alpha\cdot\nu=\gamma$. Since translations in
$(\mathscr{I}^{\infty}_\omega,\tau)$ are continuous we conclude that
Hausdorffness of the space $(\mathscr{I}^{\infty}_\omega,\tau)$ and
Proposition~\ref{proposition-2.5} imply that $\alpha$ an isolated
point in $(\mathscr{I}^{\infty}_\omega,\tau)$.

Suppose our statement is true for all $n<k$, $k\in\mathbb{N}$. We
shall show that its is true for $n=k$. Our assumption implies that
$I_{k}$ is a closed subset of $(\mathscr{I}^{\infty}_\omega,\tau)$.
Later we shall denote by $\tau_k$ the topology induced from
$(\mathscr{I}^{\infty}_\omega,\tau)$ onto $I_{k}$. Then
$(I_{k},\tau_k)$ is a Baire space. We define a family
$\mathscr{C}_k=\{\{{\beta\}}\mid\beta\in I_k\}$. Since the
topological space $(I_{k},\tau_k)$ is Baire we have that the family
$\mathscr{C}_k$ has an element with non-empty interior and hence the
topological space $(I_{k},\tau_k)$ has an isolated point $\gamma$ in
$(I_{k},\tau_k)$. Let $U(\gamma)$ be an open neighbourhood
$U(\gamma)$ of $\gamma$ in $(\mathscr{I}^{\infty}_\omega,\tau)$ such
that $U(\gamma)\cap I_k=\{\gamma\}$. Since
$(\mathscr{I}^{\infty}_\omega,\tau)$ is a semitopological semigroup
we have that there exists an open neighbourhood $V(\gamma)$ of
$\gamma$ in $(\mathscr{I}^{\infty}_\omega,\tau)$ such that
$V(\gamma)\subseteq U(\gamma)$ and $\gamma\cdot\gamma^{-1}\cdot
V(\gamma)\subseteq U(\gamma)$. We remark that
$\gamma\cdot\gamma^{-1}\cdot V(\gamma)\subseteq\{\gamma\}$. Hence by
Proposition~\ref{proposition-2.5} the neighbourhood $V(\gamma)$ is
finite and Hausdorffness of the space
$(\mathscr{I}^{\infty}_\omega,\tau)$ implies that $\gamma$ an
isolated point in $(\mathscr{I}^{\infty}_\omega,\tau)$. Let $\alpha$
be an arbitrary element of the set $I_k\setminus I_{k+1}$. Then
$|\omega\setminus\operatorname{dom}\alpha|=k$ and hence statements
$(viii)-(xi)$ of Proposition~\ref{proposition-2.2} imply that there
exist $\mu,\nu\in\mathscr{I}^{\infty}_\omega$ such that
$\mu\cdot\alpha\cdot\nu=\gamma$. Since translations in
$(\mathscr{I}^{\infty}_\omega,\tau)$ are continuous we conclude that
Hausdorffness of the space $(\mathscr{I}^{\infty}_\omega,\tau)$ and
Proposition~\ref{proposition-2.5} imply that $\alpha$ an isolated
point in $(\mathscr{I}^{\infty}_\omega,\tau)$. This completes the
proof of our theorem.
\end{proof}

\begin{remark}\label{remark-5.2}
We observe that the statement of Theorem~\ref{theorem-5.1} holds for
every topology $\tau$ on the semigroup $\mathscr{I}^{\infty}_\omega$
such that $(\mathscr{I}^{\infty}_\omega,\tau)$ is a Hausdorff
semitopological semigroup and every (two-sided) ideal in
$(\mathscr{I}^{\infty}_\omega,\tau)$ is a Baire space.
\end{remark}

Theorem~\ref{theorem-5.1} implies the following corollary:

\begin{corollary}\label{corollary-5.3}
Every \v{C}ech complete (locally compact) topology $\tau$ on the
semigroup $\mathscr{I}^{\infty}_\omega$ such that
$(\mathscr{I}^{\infty}_\omega,\tau)$ is a Hausdorff semitopological
semigroup is discrete.
\end{corollary}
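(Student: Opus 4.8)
The plan is to deduce this immediately from Theorem~\ref{theorem-5.1}, using the implication ``\v{C}ech complete $\Rightarrow$ hereditary Baire'' that was recorded in Section~4 (via \cite[Theorem~3.9.6]{Engelking1989}): every \v{C}ech complete space, and in particular every locally compact Hausdorff space, is hereditary Baire. So the only thing to observe is that a topology $\tau$ satisfying the hypothesis of the corollary automatically satisfies the hypothesis of Theorem~\ref{theorem-5.1}.

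Concretely, I would argue as follows. Suppose $\tau$ is a \v{C}ech complete topology on $\mathscr{I}^{\infty}_\omega$ such that $(\mathscr{I}^{\infty}_\omega,\tau)$ is a Hausdorff semitopological semigroup. Since $\mathscr{I}^{\infty}_\omega$ is a \v{C}ech complete space, it is hereditary Baire, and hence Theorem~\ref{theorem-5.1} applies and tells us that $\tau$ is discrete. The locally compact case is subsumed: a locally compact Hausdorff space is \v{C}ech complete, so the same reasoning works verbatim. (If one prefers to bypass \v{C}ech completeness in the locally compact case, one can instead invoke directly the fact that closed subspaces of locally compact spaces are locally compact and locally compact Hausdorff spaces are Baire, which again yields hereditary Baireness.)

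There is essentially no obstacle here: the content is entirely carried by Theorem~\ref{theorem-5.1}, and the corollary is a routine specialization obtained by citing the standard topological fact that \v{C}ech complete (hence locally compact Hausdorff) spaces are hereditary Baire. The only point requiring a word of care is making sure the ambient space itself, not merely some subspace, is the one to which the hereditary Baire property is applied — but since hereditary Baireness of $(\mathscr{I}^{\infty}_\omega,\tau)$ is exactly what Theorem~\ref{theorem-5.1} consumes, no further work is needed.
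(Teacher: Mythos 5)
Your proposal is correct and coincides with the paper's argument: the corollary is stated as an immediate consequence of Theorem~\ref{theorem-5.1} together with the fact, already recorded in Section~4 via \cite[Theorem~3.9.6]{Engelking1989}, that every \v{C}ech complete (hence every locally compact Hausdorff) space is hereditary Baire. No further comment is needed.
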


\begin{theorem}\label{theorem-5.4}
Let $\lambda$ be an infinite cardinal and $S$ be a topological
semigroup which contains a dense discrete subsemigroup
$\mathscr{I}^{\infty}_\lambda$. If
$I=S\setminus\mathscr{I}^{\infty}_\lambda \neq\varnothing$ then $I$
is an ideal of $S$.
\end{theorem}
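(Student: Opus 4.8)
The plan is to show that $I=S\setminus\mathscr{I}^{\infty}_\lambda$ is closed under multiplication on both sides by arbitrary elements of $S$. Since $\mathscr{I}^{\infty}_\lambda$ is dense in $S$ and multiplication in the topological semigroup $S$ is jointly continuous, it suffices to show that $s\cdot\alpha\in I$ and $\alpha\cdot s\in I$ whenever $s\in I$ and $\alpha\in\mathscr{I}^{\infty}_\lambda$; indeed, once we know $I\cdot\mathscr{I}^{\infty}_\lambda\subseteq I$, continuity and the density of $\mathscr{I}^{\infty}_\lambda$ will propagate this to $I\cdot S\subseteq I$, because if $s\in I$ then $sS=s\cdot\operatorname{cl}_S(\mathscr{I}^{\infty}_\lambda)\subseteq\operatorname{cl}_S(s\cdot\mathscr{I}^{\infty}_\lambda)\subseteq\operatorname{cl}_S(I)$, and $I$ is closed in $S$ since $\mathscr{I}^{\infty}_\lambda$, being discrete and dense, is open in $S$. (That last observation — a dense discrete subspace of a Hausdorff space is open — is the standard fact I would invoke first.)

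So the crux is: if $s\in I$ and $\alpha\in\mathscr{I}^{\infty}_\lambda$, then $s\cdot\alpha\notin\mathscr{I}^{\infty}_\lambda$, and symmetrically $\alpha\cdot s\notin\mathscr{I}^{\infty}_\lambda$. Suppose to the contrary that $s\cdot\alpha=\beta\in\mathscr{I}^{\infty}_\lambda$. Here I would use Proposition~\ref{proposition-2.5}: the set $X=\{\chi\in\mathscr{I}^{\infty}_\lambda\mid \chi\cdot\alpha=\beta\}$ is finite. Since $\{\beta\}$ is open in $\mathscr{I}^{\infty}_\lambda$, it is open in $S$ (as $\mathscr{I}^{\infty}_\lambda$ is open in $S$), and by separate/joint continuity of multiplication the map $x\mapsto x\cdot\alpha$ is continuous, so its preimage $W$ of the open set $\{\beta\}$ is an open neighbourhood of $s$ in $S$. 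By density of $\mathscr{I}^{\infty}_\lambda$, the open set $W$ meets $\mathscr{I}^{\infty}_\lambda$, and $W\cap\mathscr{I}^{\infty}_\lambda\subseteq X$ is finite; but a nonempty finite subset of the $T_1$ space $S$ is closed, so its complement is open, and intersecting we find that $W$ can be shrunk to an open neighbourhood of $s$ whose only intersection with $\mathscr{I}^{\infty}_\lambda$ is... here I must be careful, because $s$ itself might be a limit of the finitely many points of $X$ — but it cannot be, since a $T_1$ space has no point in the closure of a finite set other than the points themselves, and $s\notin\mathscr{I}^{\infty}_\lambda\supseteq X$. Hence $s$ has a neighbourhood disjoint from $\mathscr{I}^{\infty}_\lambda$, contradicting density. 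Therefore $s\cdot\alpha\in I$; the argument for $\alpha\cdot s\in I$ is identical, using the other finiteness clause of Proposition~\ref{proposition-2.5}.

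The main obstacle I anticipate is the bookkeeping around the finite ``bad set'' $X$: one must combine the finiteness of $X$ from Proposition~\ref{proposition-2.5} with the Hausdorff (or merely $T_1$) separation axiom to conclude that $s$, which lies outside $\mathscr{I}^{\infty}_\lambda$ and hence outside $X$, can be separated from $X$ by an open set, thereby contradicting the density of $\mathscr{I}^{\infty}_\lambda$ in $S$. Everything else is a routine density-and-continuity argument. Once both one-sided statements $I\cdot\mathscr{I}^{\infty}_\lambda\subseteq I$ and $\mathscr{I}^{\infty}_\lambda\cdot I\subseteq I$ are in hand, the passage to $I\cdot S\subseteq I$ and $S\cdot I\subseteq I$ via closedness of $I$ and continuity of multiplication is immediate, completing the proof that $I$ is a two-sided ideal of $S$.
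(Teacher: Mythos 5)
Your proof is correct, and its core is the same as the paper's: openness of the dense discrete copy of $\mathscr{I}^{\infty}_\lambda$ (hence openness of singletons $\{\beta\}$), continuity of translations, the finiteness statement of Proposition~\ref{proposition-2.5}, and a $T_1$/Hausdorff separation of the limit point from the finite solution set to contradict density. In fact you spell out more carefully than the paper does why a neighbourhood of $s\in I$ meeting $\mathscr{I}^{\infty}_\lambda$ only in finitely many points contradicts density. The one place you genuinely diverge is the product of two elements of $I$: the paper treats $I\cdot I\subseteq I$ by a separate direct argument, using joint continuity to find neighbourhoods $U(a),U(b)$ with $U(a)\cdot U(b)=\{c\}$ and then applying Proposition~\ref{proposition-2.5} to both coordinates, whereas you deduce the full inclusions $I\cdot S\subseteq I$ and $S\cdot I\subseteq I$ from the mixed cases alone, via closedness of $I$ and the inclusion $s\cdot\operatorname{cl}_S(\mathscr{I}^{\infty}_\lambda)\subseteq\operatorname{cl}_S(s\cdot\mathscr{I}^{\infty}_\lambda)$. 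Your route needs only continuity of the translations for this last step (so it would survive in a semitopological setting once the mixed cases are known), while the paper's third case is a self-contained repetition of the same contradiction scheme; both are valid, and your closure argument is arguably the cleaner way to finish.
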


\begin{proof}
Suppose that $I$ is not an ideal of $S$. Then at least one of the
following conditions holds:
\begin{equation*}
    1)~I\cdot\mathscr{I}^{\infty}_\lambda\nsubseteq I,
    \qquad 2)~\mathscr{I}^{\infty}_\lambda\cdot I\nsubseteq I, \qquad \mbox{or}
    \qquad 3)~I\cdot I\nsubseteq I.
\end{equation*}
Since $\mathscr{I}^{\infty}_\lambda$ is a discrete dense subspace of
$S$, Theorem~3.5.8~\cite{Engelking1989} implies that
$\mathscr{I}^{\infty}_\lambda$ is an open subspace of $S$. Suppose
there exist $a\in\mathscr{I}^{\infty}_\lambda$ and $b\in I$ such
that $b\cdot a=c\notin I$. Since $\mathscr{I}^{\infty}_\lambda$ is a
dense open discrete subspace of $S$ the continuity of the semigroup
operation in $S$ implies that there exists an open neighbourhood
$U(b)$ of $b$ in $S$ such that $U(b)\cdot \{a\}=\{c\}$. But by
Proposition~\ref{proposition-2.5} the equation  $x\cdot a=c$ has
finitely many solutions in $\mathscr{I}^{\infty}_\lambda$. This
contradicts the assumption that $b\in
S\setminus\mathscr{I}^{\infty}_\lambda$. Therefore $b\cdot a=c\in I$
and hence $I\cdot\mathscr{I}^{\infty}_\lambda\subseteq I$. The proof
of the inclusion $\mathscr{I}^{\infty}_\lambda\cdot I\subseteq I$ is
similar.

Suppose there exist $a,b\in I$ such that $a\cdot b=c\notin I$. Since
$\mathscr{I}^{\infty}_\lambda$ is a dense open discrete subspace of
$S$ the continuity of the semigroup operation in $S$ implies that
there exist open neighbourhoods $U(a)$ and $U(b)$ of $a$ and $b$ in
$S$, respectively, such that $U(a)\cdot U(b)=\{c\}$. But by
Proposition~\ref{proposition-2.5} the equations $x\cdot b_0=c$ and
$a_0\cdot y=c$ have finitely many solutions in
$\mathscr{I}^{\infty}_\lambda$. This contradicts the assumption that
$a,b\in S\setminus\mathscr{I}^{\infty}_\lambda$. Therefore $a\cdot
b=c\in I$ and hence $I\cdot I\subseteq I$.
\end{proof}

\begin{proposition}\label{proposition-5.5}
Let $S$ be a topological semigroup which contains a dense discrete
subsemigroup $\mathscr{I}^{\infty}_\lambda$. Then for every
$c\in\mathscr{I}^{\infty}_\lambda$ the set
\begin{equation*}
    D_c(\mathscr{I}^{\infty}_\lambda)=
    \{(x,y)\in\mathscr{I}^{\infty}_\lambda
    \times\mathscr{I}^{\infty}_\lambda\mid x\cdot y=c\}
\end{equation*}
is a closed-and-open subset of $S\times S$.
\end{proposition}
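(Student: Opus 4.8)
The plan is to dispose first of a trivial case and then to prove openness and closedness separately, each by a short reduction to results already established.

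If $S\setminus\mathscr{I}^{\infty}_\lambda=\varnothing$, then $S=\mathscr{I}^{\infty}_\lambda$ is discrete, $S\times S$ is discrete, and $D_c(\mathscr{I}^{\infty}_\lambda)$ is trivially closed-and-open. So from now on I assume $I=S\setminus\mathscr{I}^{\infty}_\lambda\neq\varnothing$, which is exactly the hypothesis under which Theorem~\ref{theorem-5.4} applies.

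For openness: exactly as in the proof of Theorem~\ref{theorem-5.4}, since $\mathscr{I}^{\infty}_\lambda$ is a dense discrete subspace of $S$, Theorem~3.5.8~\cite{Engelking1989} shows that $\mathscr{I}^{\infty}_\lambda$ is an open subspace of $S$; hence $\mathscr{I}^{\infty}_\lambda\times\mathscr{I}^{\infty}_\lambda$ is open in $S\times S$ and carries the discrete topology, so every subset of it -- in particular $D_c(\mathscr{I}^{\infty}_\lambda)$ -- is open in $S\times S$.

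For closedness: let $m\colon S\times S\to S$ denote the (jointly continuous) multiplication of $S$. Since $S$ is Hausdorff, $\{c\}$ is closed in $S$, so $m^{-1}(c)$ is a closed subset of $S\times S$ containing $D_c(\mathscr{I}^{\infty}_\lambda)$; therefore $\operatorname{cl}_{S\times S}\bigl(D_c(\mathscr{I}^{\infty}_\lambda)\bigr)\subseteq m^{-1}(c)$. Take an arbitrary point $(a,b)$ of this closure; then $a\cdot b=c\in\mathscr{I}^{\infty}_\lambda$. By Theorem~\ref{theorem-5.4} the set $I$ is an ideal of $S$, so if $a\in I$ or $b\in I$ we would get $a\cdot b\in I=S\setminus\mathscr{I}^{\infty}_\lambda$, contradicting $a\cdot b=c$. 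Hence $a,b\in\mathscr{I}^{\infty}_\lambda$ and $a\cdot b=c$, i.e.\ $(a,b)\in D_c(\mathscr{I}^{\infty}_\lambda)$. Thus $D_c(\mathscr{I}^{\infty}_\lambda)$ contains its own closure and is closed in $S\times S$.

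I do not expect a genuine obstacle: the argument uses only Theorem~3.5.8~\cite{Engelking1989}, Theorem~\ref{theorem-5.4}, continuity of multiplication, and the Hausdorff property. The points to watch are disposing of the degenerate case $I=\varnothing$ before invoking Theorem~\ref{theorem-5.4}, and noticing that $D_c(\mathscr{I}^{\infty}_\lambda)$ need not be finite (for instance $D_{\mathbb{I}}(\mathscr{I}^{\infty}_\lambda)=\{(\chi,\chi^{-1})\mid\chi\in\textsf{S}_\infty(\lambda)\}$), so that Proposition~\ref{proposition-2.5} is not available as a shortcut and the ideal structure supplied by Theorem~\ref{theorem-5.4} is genuinely needed for closedness.
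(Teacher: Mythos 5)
Your proof is correct and follows essentially the same route as the paper: openness comes from $\mathscr{I}^{\infty}_\lambda$ (hence $\mathscr{I}^{\infty}_\lambda\times\mathscr{I}^{\infty}_\lambda$) being open and discrete in $S$, and closedness from the continuity of multiplication together with Theorem~\ref{theorem-5.4}, whose ideal property forces any closure point of $D_c(\mathscr{I}^{\infty}_\lambda)$ back into $\mathscr{I}^{\infty}_\lambda\times\mathscr{I}^{\infty}_\lambda$ and hence into $D_c(\mathscr{I}^{\infty}_\lambda)$. Your explicit disposal of the degenerate case $S=\mathscr{I}^{\infty}_\lambda$, where Theorem~\ref{theorem-5.4} does not formally apply, is a small point of care that the paper leaves implicit.
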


\begin{proof}
Since $\mathscr{I}^{\infty}_\lambda$ is a discrete subspace of $S$
we have that $D_c(\mathscr{I}^{\infty}_\lambda)$ is an open subset
of $S\times S$.

Suppose that there exists $c\in\mathscr{I}^{\infty}_\lambda$ such
that $D_c(\mathscr{I}^{\infty}_\lambda)$ is a non-closed subset of
$S\times S$. Then there exists an accumulation point $(a,b)\in
S\times S$ of the set $D_c(\mathscr{I}^{\infty}_\lambda)$. The
continuity of the semigroup operation in $S$ implies that $a\cdot
b=c$. But $\mathscr{I}^{\infty}_\lambda\times
\mathscr{I}^{\infty}_\lambda$ is a discrete subspace of $S\times S$
and hence by Theorem~\ref{theorem-5.4} the points $a$ and $b$ belong
to the ideal $I=S\setminus \mathscr{I}^{\infty}_\lambda$ and hence
$a\cdot b\in S\setminus \mathscr{I}^{\infty}_\lambda$ cannot be
equal to $c$.
\end{proof}

A topological space $X$ is defined to be \emph{pseudocompact} if
each locally finite open cover of $X$ is finite. According to
\cite[Theorem~3.10.22]{Engelking1989} a Tychonoff topological space
$X$ is pseudocompact if and only if each continuous real-valued
function on $X$ is bounded.

\begin{theorem}\label{theorem-5.6}
If a topological semigroup $S$ contains
$\mathscr{I}^{\infty}_\lambda$ as a dense discrete subsemigroup then
the square $S\times S$ is not pseudocompact.
\end{theorem}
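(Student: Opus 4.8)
The plan is to contradict pseudocompactness directly by exhibiting an \emph{infinite locally finite open cover} of $S\times S$; this suffices because, in the sense used here, a pseudocompact space admits only finite locally finite open covers. The cover will be manufactured from the set
\begin{equation*}
D_{\mathbb{I}}(\mathscr{I}^{\infty}_\lambda)=\{(x,y)\in\mathscr{I}^{\infty}_\lambda\times\mathscr{I}^{\infty}_\lambda\mid x\cdot y=\mathbb{I}\},
\end{equation*}
which by Proposition~\ref{proposition-5.5} is a closed-and-open subset of $S\times S$.

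First I would record two elementary facts. Since $\mathscr{I}^{\infty}_\lambda$ is a dense discrete subspace of $S$, Theorem~3.5.8~\cite{Engelking1989} (exactly as in the proof of Theorem~\ref{theorem-5.4}) shows that $\mathscr{I}^{\infty}_\lambda$ is open in $S$; hence $\mathscr{I}^{\infty}_\lambda\times\mathscr{I}^{\infty}_\lambda$ is an open discrete subspace of $S\times S$, and therefore every point of it --- in particular every point of $D_{\mathbb{I}}(\mathscr{I}^{\infty}_\lambda)$ --- is isolated in $S\times S$. Secondly, $D_{\mathbb{I}}(\mathscr{I}^{\infty}_\lambda)$ is infinite: for every $\sigma\in\textsf{S}_\infty(\lambda)$ we have $\sigma,\sigma^{-1}\in\mathscr{I}^{\infty}_\lambda$ and $\sigma\cdot\sigma^{-1}=\mathbb{I}$, so $(\sigma,\sigma^{-1})\in D_{\mathbb{I}}(\mathscr{I}^{\infty}_\lambda)$, and $\textsf{S}_\infty(\lambda)$ is infinite because $\lambda$ is infinite.

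Then I would assemble the cover. Put $A=D_{\mathbb{I}}(\mathscr{I}^{\infty}_\lambda)$. By Proposition~\ref{proposition-5.5} the set $A$ is closed in $S\times S$, so $(S\times S)\setminus A$ is open; and each singleton $\{p\}$ with $p\in A$ is open in $S\times S$ because $p$ is isolated. Hence
\begin{equation*}
\mathscr{W}=\{\{p\}\mid p\in A\}\cup\{(S\times S)\setminus A\}
\end{equation*}
is an open cover of $S\times S$, and it is infinite since $A$ is infinite. It is locally finite: a point $p\in A$ has the neighbourhood $\{p\}$, which meets only the member $\{p\}$ of $\mathscr{W}$, while a point $q\notin A$ has the open neighbourhood $(S\times S)\setminus A$, which meets only the member $(S\times S)\setminus A$. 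Thus $S\times S$ carries an infinite locally finite open cover and so is not pseudocompact.

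I do not expect a serious obstacle, since Proposition~\ref{proposition-2.5}, Theorem~\ref{theorem-5.4} and Proposition~\ref{proposition-5.5} already deliver everything needed; the only steps that require a little care are verifying that the members of $\mathscr{W}$ are genuinely open in $S\times S$ --- which is precisely why one first notes that $\mathscr{I}^{\infty}_\lambda$ is open in $S$, so that the points of $A$ are isolated in $S\times S$ --- and verifying that $A$ is infinite, for which the pairs $(\sigma,\sigma^{-1})$ with $\sigma\in\textsf{S}_\infty(\lambda)$ (or, say, the pairs $(\sigma,\sigma)$ with $\sigma$ a transposition of $\lambda$, since then $\sigma\cdot\sigma=\mathbb{I}$) suffice. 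The degenerate case $S=\mathscr{I}^{\infty}_\lambda$ needs no separate argument: there $S\times S$ is an infinite discrete space and the very same cover applies.
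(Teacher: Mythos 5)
Your proposal is correct and follows essentially the same route as the paper: both rest on Proposition~\ref{proposition-5.5}, which exhibits the infinite closed-and-open discrete subspace $D_c(\mathscr{I}^{\infty}_\lambda)$ of $S\times S$ (you take $c=\mathbb{I}$), and conclude that $S\times S$ cannot be pseudocompact. The only difference is that where the paper cites Engelking (Ex.~3.10.F(d)) and Colmez for the final implication, you spell it out by producing the explicit infinite locally finite open cover consisting of the singletons of $D_{\mathbb{I}}(\mathscr{I}^{\infty}_\lambda)$ together with its complement, and you also verify explicitly that this set is infinite via the pairs $(\sigma,\sigma^{-1})$ with $\sigma\in\textsf{S}_\infty(\lambda)$ --- harmless additions that the paper leaves implicit.
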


\begin{proof}
Since the square $S\times S$ contains an infinite closed-and-open
discrete subspace $D_c(\mathscr{I}^{\infty}_\lambda)$, we conclude
that $S\times S$ fails to be pseudocompact (see
\cite[Ex.~3.10.F(d)]{Engelking1989} or \cite{Colmez1951}).
\end{proof}

A topological space $X$ is called \emph{countably compact} if any
countable open cover of $X$ contains a finite
subcover~\cite{Engelking1989}. We observe that every Hausdorff
countably compact space is pseudocompact.

Since the closure of an arbitrary subspace of a countably compact
space is countably compact (see
\cite[Theorem~3.10.4]{Engelking1989}) Theorem~\ref{theorem-5.6}
implies the following corollary:

\begin{corollary}\label{corollary-5.7}
For every infinite cardinal $\lambda$ the discrete semigroup
$\mathscr{I}^{\infty}_\lambda$ does not embed into a topological
semigroup $S$ with the countably compact square $S\times S$.
\end{corollary}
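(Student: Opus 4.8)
The plan is to reduce to Theorem~\ref{theorem-5.6} by passing to a closure. Suppose, to the contrary, that there is a topological semigroup $S$ with countably compact square $S\times S$ into which the discrete semigroup $\mathscr{I}^{\infty}_\lambda$ embeds; identify $\mathscr{I}^{\infty}_\lambda$ with its image, so that $\mathscr{I}^{\infty}_\lambda$ is a discrete subsemigroup of $S$. Put $T=\operatorname{cl}_S(\mathscr{I}^{\infty}_\lambda)$, equipped with the subspace topology.

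First I would verify that $T$ is again a topological semigroup containing $\mathscr{I}^{\infty}_\lambda$ as a \emph{dense} and \emph{discrete} subsemigroup. Density is immediate from the definition of $T$, and discreteness is inherited from $S$ since $\mathscr{I}^{\infty}_\lambda$ is a discrete subspace of $S$ and $T$ carries the subspace topology. That $T$ is a subsemigroup follows from the joint continuity of the multiplication on $S$: $\operatorname{cl}_S(\mathscr{I}^{\infty}_\lambda)\cdot\operatorname{cl}_S(\mathscr{I}^{\infty}_\lambda)\subseteq \operatorname{cl}_S\!\left(\mathscr{I}^{\infty}_\lambda\cdot\mathscr{I}^{\infty}_\lambda\right)\subseteq\operatorname{cl}_S(\mathscr{I}^{\infty}_\lambda)$, and restricting the separately/jointly continuous operation to the subspace $T\times T$ keeps it continuous.

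Finally, $T\times T$ is a closed subspace of $S\times S$, hence countably compact by \cite[Theorem~3.10.4]{Engelking1989}; being Hausdorff and countably compact, $T\times T$ is pseudocompact. On the other hand, since $T$ contains $\mathscr{I}^{\infty}_\lambda$ as a dense discrete subsemigroup, Theorem~\ref{theorem-5.6} gives that $T\times T$ is \emph{not} pseudocompact, a contradiction. Hence no such $S$ exists. There is no genuinely hard step here; the only point requiring care is the bookkeeping that passage to the closure $T$ reproduces exactly the hypotheses under which Theorem~\ref{theorem-5.6} applies.
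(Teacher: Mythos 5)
Your proof is correct and follows essentially the same route as the paper: pass to the closure $T=\operatorname{cl}_S(\mathscr{I}^{\infty}_\lambda)$, note that $T\times T$ is closed in the countably compact $S\times S$ and hence countably compact and (being Hausdorff) pseudocompact, and contradict Theorem~\ref{theorem-5.6}. The paper compresses exactly this argument into one sentence citing \cite[Theorem~3.10.4]{Engelking1989}, so no further comment is needed.
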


Since every compact topological space is countably compact
Theorem~3.24~\cite{Engelking1989} and Corollary~\ref{corollary-5.7}
imply

\begin{corollary}\label{corollary-5.8}
For every infinite cardinal $\lambda$ the discrete semigroup
$\mathscr{I}^{\infty}_\lambda$ does not embed into a compact
topological semigroup.
\end{corollary}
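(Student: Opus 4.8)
The plan is to deduce the statement immediately from Corollary~\ref{corollary-5.7}, using only the fact that a compact space has countably compact square. So I would argue by contradiction: suppose that for some infinite cardinal $\lambda$ the discrete semigroup $\mathscr{I}^{\infty}_\lambda$ embeds, as a subsemigroup and a topological subspace, into a compact topological semigroup $S$.

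Next I would invoke Tychonoff's theorem (the product of compact spaces is compact; cf. Theorem~3.24 of \cite{Engelking1989}) to conclude that $S\times S$ is compact, and hence countably compact, since every compact space is countably compact. Thus $S$ is a topological semigroup whose square $S\times S$ is countably compact and which contains $\mathscr{I}^{\infty}_\lambda$ as a (discrete) subsemigroup. This is exactly the configuration ruled out by Corollary~\ref{corollary-5.7}, so we reach a contradiction, and therefore no such compact topological semigroup $S$ exists.

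I do not anticipate any real obstacle; the statement is a purely formal consequence of Corollary~\ref{corollary-5.7}. The substantive work sits upstream: Theorem~\ref{theorem-5.6} (and its proof) produces the infinite closed-and-open discrete subspace $D_c(\mathscr{I}^{\infty}_\lambda)$ of $S\times S$ that destroys pseudocompactness, which in turn relies on Proposition~\ref{proposition-5.5}, Theorem~\ref{theorem-5.4}, and ultimately on the finite-to-one property of translations from Proposition~\ref{proposition-2.5}. The only point deserving a second glance is a bookkeeping one: ``embeds into a compact topological semigroup'' must be read with the same notion of topological embedding as in Corollary~\ref{corollary-5.7}, so that the hypotheses line up verbatim and the reduction is legitimate.
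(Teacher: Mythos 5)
Your proposal is correct and follows essentially the same route as the paper: the authors also deduce Corollary~\ref{corollary-5.8} directly from Corollary~\ref{corollary-5.7}, using that the square of a compact space is compact and hence countably compact. No issues.
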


We recall that the Stone-\v{C}ech compactification of a Tychonoff
space $X$ is a compact Hausdorff space $\beta{X}$ containing $X$ as
a dense subspace so that each continuous map $f\colon X\rightarrow
Y$ to a compact Hausdorff space $Y$ extends to a continuous map
$\overline{f}\colon \beta{X}\rightarrow Y$~\cite{Engelking1989}.

\begin{theorem}\label{theorem-5.9}
For every infinite cardinal $\lambda$ the discrete semigroup
$\mathscr{I}^{\infty}_\lambda$ does not embed into a Tychonoff
topological semigroup $S$ with the pseudocompact square $S\times S$.
\end{theorem}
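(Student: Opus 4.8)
The plan is to argue by contradiction and reduce to the situation already handled by Theorem~\ref{theorem-5.6}. Suppose $\mathscr{I}^{\infty}_\lambda$ embeds, as a discrete subsemigroup, into a Tychonoff topological semigroup $S$ with $S\times S$ pseudocompact, and identify $\mathscr{I}^{\infty}_\lambda$ with its image. First I would pass to the closure $Y=\operatorname{cl}_S(\mathscr{I}^{\infty}_\lambda)$. Since the closure of a subsemigroup of a topological semigroup is again a subsemigroup, $Y$ is a Tychonoff topological subsemigroup of $S$ in which $\mathscr{I}^{\infty}_\lambda$ is a dense discrete subsemigroup; being dense and discrete in the Hausdorff space $Y$ it is also an open subspace of $Y$, so its points are isolated in $Y$ (this is exactly the situation used in the proof of Theorem~\ref{theorem-5.4}). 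Consequently Theorem~\ref{theorem-5.4} and Proposition~\ref{proposition-5.5} apply with $Y$ in place of $S$: $Y\setminus\mathscr{I}^{\infty}_\lambda$ is an ideal of $Y$ (or $Y=\mathscr{I}^{\infty}_\lambda$), and for every $c\in\mathscr{I}^{\infty}_\lambda$ the set $D_c(\mathscr{I}^{\infty}_\lambda)$ is a closed-and-open subset of $Y\times Y$.

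Next I would record two facts about $D_c=D_c(\mathscr{I}^{\infty}_\lambda)$. First, $D_c$ is infinite: the map $\gamma\mapsto(c\gamma,\gamma^{-1})$ is an injection of the infinite group $\textsf{S}_\infty(\lambda)$ into $D_c$ (here $\gamma\gamma^{-1}=\mathbb{I}$, so $(c\gamma)\gamma^{-1}=c$). Hence, exactly as in the proof of Theorem~\ref{theorem-5.6}, $Y\times Y$ contains an infinite closed-and-open discrete subspace and so $Y\times Y$ is not pseudocompact. Second, $D_c$ is a closed subspace of $S\times S$: if $(a,b)$ were an accumulation point of $D_c$ lying outside $D_c$, then continuity of multiplication would give $ab=c\in\mathscr{I}^{\infty}_\lambda$, while $(a,b)\in\operatorname{cl}_{S\times S}(\mathscr{I}^{\infty}_\lambda\times\mathscr{I}^{\infty}_\lambda)=Y\times Y$ with $a\notin\mathscr{I}^{\infty}_\lambda$ or $b\notin\mathscr{I}^{\infty}_\lambda$, which by Theorem~\ref{theorem-5.4} would force $ab$ into the ideal $Y\setminus\mathscr{I}^{\infty}_\lambda$ of $Y$ --- impossible. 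Thus $D_c$ is an infinite closed discrete subspace of the pseudocompact space $S\times S$ which is, in addition, closed-and-open in the closed subspace $Y\times Y$.

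The remaining, and genuinely hard, step is to convert this into a contradiction with pseudocompactness of $S\times S$. One cannot simply quote ``a closed subspace of a pseudocompact space is pseudocompact'', since that is false: a pseudocompact space may carry an infinite closed discrete subspace. So the plan is to show that $D_c$ is in fact $C$-embedded in $S\times S$; then the continuous function sending the $n$-th point of a countable subset of $D_c$ to $n$ and everything else to $0$ is unbounded on $D_c$, hence extends to an unbounded continuous function on $S\times S$, contradicting pseudocompactness. Since $D_c$ is closed-and-open in $Y\times Y$ it is automatically $C$-embedded in $Y\times Y$, so it is enough to $C$-embed $Y\times Y$ in $S\times S$. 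To get a handle on this I would first use the identity $\mathbb{I}$ of $\mathscr{I}^{\infty}_\lambda$, which by continuity is also the identity of $Y$: the continuous idempotent map $s\mapsto\mathbb{I}\cdot s\cdot\mathbb{I}$ retracts $S$ onto the closed subsemigroup $\mathbb{I}\cdot S\cdot\mathbb{I}=\{s\in S\mid \mathbb{I}s=s\mathbb{I}=s\}\supseteq Y$, and squaring gives a retraction of $S\times S$ onto $(\mathbb{I}\cdot S\cdot\mathbb{I})\times(\mathbb{I}\cdot S\cdot\mathbb{I})$, so one may harmlessly assume $\mathbb{I}$ is the global identity of $S$; after that I would push the open neighbourhoods of the (countably many) points of $D_c$ apart, using continuity of multiplication together with the finiteness statement of Proposition~\ref{proposition-2.5}, in the spirit of the proof of Theorem~\ref{theorem-5.4}, so as to produce a locally finite infinite family of nonempty open subsets of $S\times S$. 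I expect this separation step --- carrying it out in a space that is only known to be Tychonoff --- to be the main obstacle; everything preceding it is routine bookkeeping on top of Theorems~\ref{theorem-5.4} and~\ref{theorem-5.6} and Proposition~\ref{proposition-5.5}.
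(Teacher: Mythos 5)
Your reduction to the closure $Y=\operatorname{cl}_S(\mathscr{I}^{\infty}_\lambda)$ and the facts you establish there (that $D_c(\mathscr{I}^{\infty}_\lambda)$ is infinite, closed-and-open in $Y\times Y$, and closed in $S\times S$) are correct, but the argument has a genuine gap exactly at the step you yourself flag as the hard one, and that step is not a technicality. An infinite closed discrete subspace of $S\times S$ is no contradiction to pseudocompactness (Mr\'owka's $\Psi$-space is pseudocompact and contains one), and the clopenness you have for $D_c(\mathscr{I}^{\infty}_\lambda)$ lives only in $Y\times Y$, which is merely a closed subspace of $S\times S$: closed subspaces of pseudocompact Tychonoff spaces need be neither pseudocompact nor $C$-embedded, so nothing transfers. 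Your retraction $s\mapsto\mathbb{I}\cdot s\cdot\mathbb{I}$ does exhibit $(\mathbb{I}S\mathbb{I})\times(\mathbb{I}S\mathbb{I})$ as a $C$-embedded (hence pseudocompact) retract of $S\times S$, but $Y\times Y$ is again only a closed subset of that retract, so you are back at the same obstacle; and the proposed separation of neighbourhoods via Proposition~\ref{proposition-2.5} has no leverage at points of $S\setminus Y$, whose neighbourhoods need not meet $\mathscr{I}^{\infty}_\lambda$ at all (the proofs of Theorems~\ref{theorem-5.4} and~\ref{theorem-5.6} use density of $\mathscr{I}^{\infty}_\lambda$ in an essential way, and here it is only dense in $Y$). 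As written, your plan produces neither an unbounded continuous real-valued function on $S\times S$ nor an infinite locally finite family of nonempty open subsets of $S\times S$, so the contradiction with pseudocompactness of $S\times S$ is not reached.

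The paper closes this gap with one external result: by Theorem~1.3 of \cite{BanakhDimitrova2010}, for a Tychonoff topological semigroup $S$ with pseudocompact square $S\times S$ the multiplication $\mu\colon S\times S\rightarrow S$ extends to a continuous semigroup operation $\beta\mu\colon\beta S\times\beta S\rightarrow\beta S$, so $S$, and hence the discrete copy of $\mathscr{I}^{\infty}_\lambda$ inside it, embeds into the compact topological semigroup $\beta S$; Corollary~\ref{corollary-5.8} then gives the contradiction immediately. This is precisely where the Tychonoff and pseudocompact-square hypotheses are used, and it is the ingredient your sketch lacks: to pass from information inside $Y\times Y$ to a global conclusion about $S\times S$ you essentially need this extension theorem (or some substitute for it), not a further refinement of the bookkeeping with $D_c(\mathscr{I}^{\infty}_\lambda)$.
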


\begin{proof}
By Theorem~1.3~\cite{BanakhDimitrova2010} for any topological
semigroup $S$ with the pseudocompact square $S\times S$ the
semigroup operation $\mu\colon S\times S\rightarrow S$ extends to a
continuous semigroup operation $\beta\mu\colon\beta S\times\beta
S\rightarrow\beta S$, so $S$ is a subsemigroup of the compact
topological semigroup $\beta S$. Then Corollary~\ref{corollary-5.8}
implies the statement of the theorem.
\end{proof}


The following example shows that there exists a non-discrete
topology $\tau_F$ on the semigroup $\mathscr{I}^{\infty}_\lambda$
such that $(\mathscr{I}^{\infty}_\lambda,\tau_F)$ is a Tychonoff
topological inverse semigroup.

\begin{example}\label{example-5.10}
We define a topology $\tau_F$ on the semigroup
$\mathscr{I}^{\infty}_\lambda$ as follows. For every
$\alpha\in\mathscr{I}^{\infty}_\lambda$ we define a family
\begin{equation*}
    \mathscr{B}_F(\alpha)=\{U_\alpha(F)\mid F \mbox{ is a finite
    subset of } \operatorname{dom}\alpha\},
\end{equation*}
where
\begin{equation*}
    U_\alpha(F)=\{\beta\in\mathscr{I}^{\infty}_\lambda
    \mid \operatorname{dom}\alpha=\operatorname{dom}\beta,
    \operatorname{ran}\alpha=\operatorname{ran}\beta \mbox{ and }
    (x)\beta=(x)\alpha \mbox{ for all } x\in F\}.
\end{equation*}
Since conditions (BP1)--(BP3)~\cite{Engelking1989} hold for the
family $\{\mathscr{B}_F(\alpha)\}_{\alpha\in
\mathscr{I}^{\infty}_\lambda}$ we conclude that the family
$\{\mathscr{B}_F(\alpha)\}_{\alpha\in
\mathscr{I}^{\infty}_\lambda}$ is the base of the topology
$\tau_F$ on the semigroup $\mathscr{I}^{\infty}_\lambda$.
\end{example}

\begin{proposition}\label{proposition-5.11}
$(\mathscr{I}^{\infty}_\lambda,\tau_F)$ is a Tychonoff topological
inverse semigroup.
\end{proposition}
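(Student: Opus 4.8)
The plan is to verify directly that the topology $\tau_F$ is Hausdorff, Tychonoff, and that the semigroup operation together with inversion are continuous with respect to $\tau_F$. First I would check separation: given distinct $\alpha,\beta\in\mathscr{I}^{\infty}_\lambda$, either $\operatorname{dom}\alpha\neq\operatorname{dom}\beta$, or $\operatorname{ran}\alpha\neq\operatorname{ran}\beta$, or there is some $x\in\operatorname{dom}\alpha=\operatorname{dom}\beta$ with $(x)\alpha\neq(x)\beta$; in the first two cases the basic neighbourhoods $U_\alpha(\varnothing)$ and $U_\beta(\varnothing)$ are already disjoint, and in the third case $U_\alpha(\{x\})$ and $U_\beta(\{x\})$ are disjoint. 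So $(\mathscr{I}^{\infty}_\lambda,\tau_F)$ is Hausdorff. For the Tychonoff property I would observe that each basic set $U_\alpha(F)$ is in fact closed: its complement is a union of basic open sets (those disagreeing with $\alpha$ on some coordinate of $F$, together with those with a different domain or range), so $\tau_F$ has a base of clopen sets; a Hausdorff space with a clopen base is zero-dimensional, hence Tychonoff.

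Next I would prove joint continuity of multiplication. Fix $\alpha,\beta\in\mathscr{I}^{\infty}_\lambda$, put $\gamma=\alpha\cdot\beta$, and take a basic neighbourhood $U_\gamma(F)$ with $F$ a finite subset of $\operatorname{dom}\gamma$. The key point is that $\operatorname{dom}\gamma$ and $\operatorname{ran}\gamma$ are each determined by $\operatorname{dom}\alpha,\operatorname{ran}\alpha,\operatorname{dom}\beta,\operatorname{ran}\beta$ together with the \emph{finitely many} places where $\alpha$ and $\beta$ move points; concretely, a point lies in $\operatorname{dom}(\alpha'\cdot\beta')$ for $\alpha'\in U_\alpha(F_1)$, $\beta'\in U_\beta(F_2)$ precisely when its behaviour under the (finitely many) non-identity transitions is pinned down. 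Thus I would choose $F_1\subseteq\operatorname{dom}\alpha$ to be the finite set $F\cup\{x\in\operatorname{dom}\alpha\mid (x)\alpha\neq x\}$ together with the finitely many preimages relevant to $F$, and $F_2\subseteq\operatorname{dom}\beta$ to be $\{(x)\alpha\mid x\in F_1\}\cup\{y\in\operatorname{dom}\beta\mid (y)\beta\neq y\}$, intersected with the respective domains. Then for $\alpha'\in U_\alpha(F_1)$ and $\beta'\in U_\beta(F_2)$ one checks that $\operatorname{dom}(\alpha'\beta')=\operatorname{dom}\gamma$, $\operatorname{ran}(\alpha'\beta')=\operatorname{ran}\gamma$, and $(x)(\alpha'\beta')=(x)\gamma$ for all $x\in F$, so $U_\alpha(F_1)\cdot U_\beta(F_2)\subseteq U_\gamma(F)$. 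Continuity of inversion is easier: since $\operatorname{dom}\alpha^{-1}=\operatorname{ran}\alpha$, $\operatorname{ran}\alpha^{-1}=\operatorname{dom}\alpha$, and $(y)\alpha^{-1}=x\iff (x)\alpha=y$, the inverse image of $U_{\alpha^{-1}}(F)$ is exactly $U_\alpha((F)\alpha^{-1})$, which is basic open.

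I expect the bookkeeping in the continuity-of-multiplication step to be the main obstacle — not conceptually hard, but one must carefully account for three sources of non-identity behaviour in the product (the non-identity part of $\alpha$, the non-identity part of $\beta$, and the interaction at points where $\operatorname{ran}\alpha$ and $\operatorname{dom}\beta$ differ) and confirm that fixing $\alpha'$ and $\beta'$ on the prescribed finite sets forces $\operatorname{dom}$ and $\operatorname{ran}$ of the product to be \emph{exactly} (not merely contain) those of $\gamma$. The crucial structural fact making this work is that every element of $\mathscr{I}^{\infty}_\lambda$ is the identity off a finite set, so only finitely many coordinates ever matter; once that is exploited, closing up the finite sets $F_1,F_2$ under the finitely many relevant transitions suffices. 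Finally, non-discreteness is immediate: for any $\alpha$ with $\operatorname{dom}\alpha=\operatorname{ran}\alpha$ infinite in complement-free sense and at least one moved point, each $U_\alpha(F)$ is infinite since we may alter $\alpha$ on coordinates outside $F$, so no singleton is open.
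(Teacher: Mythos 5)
Your Hausdorffness argument, the zero-dimensionality route to the Tychonoff property (each $U_\alpha(F)$ is clopen, so $\tau_F$ has a base of clopen sets), and the treatment of inversion are all correct; the Tychonoff step is in fact a different and more direct argument than the paper's, which instead writes $(\mathscr{I}^{\infty}_\lambda,\tau_F)$ as a topological sum of open-and-closed $\mathscr{H}$-classes homeomorphic to the topological group of units. The gap is in the continuity of multiplication, exactly at the point you flag but do not resolve: with the sets $F_1,F_2$ you prescribe it is \emph{not} true that $\operatorname{dom}(\alpha'\cdot\beta')=\operatorname{dom}\gamma$ and $\operatorname{ran}(\alpha'\cdot\beta')=\operatorname{ran}\gamma$ for all $\alpha'\in U_\alpha(F_1)$, $\beta'\in U_\beta(F_2)$. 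The trouble is caused by points where $\alpha$ and $\beta$ act as the identity, which sets built from $F$ and the moved points never see. Concretely, let $\alpha=\mathbb{I}$, let $\beta$ be the identity map of $\lambda\setminus\{a\}$, so $\gamma=\alpha\cdot\beta=\beta$, and let $F\subseteq\lambda\setminus\{a\}$ be finite. Your recipe gives $F_1=F$ and $F_2=F$. Pick $b\notin F\cup\{a\}$ and let $\alpha'$ be the transposition interchanging $a$ and $b$; then $\alpha'\in U_{\mathbb{I}}(F_1)$ and $\beta\in U_\beta(F_2)$, yet $\operatorname{dom}(\alpha'\cdot\beta)=\lambda\setminus\{b\}\neq\lambda\setminus\{a\}=\operatorname{dom}\gamma$, so $\alpha'\cdot\beta\notin U_\gamma(F)$. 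A dual example (with $\beta'$ a transposition moving a point of $\operatorname{dom}\beta\setminus\operatorname{ran}\alpha$) shows the range can go wrong as well.

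The proposition itself is true, and your plan is repaired by enlarging the control sets so as to capture precisely this interaction. Put $B=\operatorname{ran}\alpha\setminus\operatorname{dom}\beta$ and $C=\operatorname{dom}\beta\setminus\operatorname{ran}\alpha$ (both finite), and take $F_1=F\cup(B)\alpha^{-1}$ and $F_2=(F)\alpha\cup C$. If $\alpha'\in U_\alpha(F_1)$, then $\alpha'$ is injective, $\operatorname{ran}\alpha'=\operatorname{ran}\alpha\supseteq B$, and $\alpha'$ agrees with $\alpha$ on $(B)\alpha^{-1}$, so the full $\alpha'$-preimage of $B$ equals $(B)\alpha^{-1}$; hence $\operatorname{dom}(\alpha'\cdot\beta')=\operatorname{dom}\alpha\setminus(B)\alpha^{-1}=\operatorname{dom}\gamma$. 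Dually, agreement of $\beta'$ with $\beta$ on $C$ gives $\operatorname{ran}(\alpha'\cdot\beta')=\operatorname{ran}\beta\setminus(C)\beta=\operatorname{ran}\gamma$, and agreement on $F$ and $(F)\alpha$ pins down the values on $F$, so $U_\alpha(F_1)\cdot U_\beta(F_2)\subseteq U_\gamma(F)$. For comparison, the paper's proof uses only $U_\alpha(F)\cdot U_\beta((F)\alpha)\subseteq U_\gamma(F)$ and is silent on the domain/range verification, so it glosses over the same point that trips up your recipe; your instinct to enlarge the finite sets is exactly what is needed, but the enlargement must be by the identity points in $B$ and $C$ (more precisely by $(B)\alpha^{-1}$ and $C$), not by the moved points of $\alpha$ and $\beta$.
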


\begin{proof}
Let $\alpha$ and $\beta$ be arbitrary elements of the semigroup
$\mathscr{I}^{\infty}_\lambda$. We put $\gamma=\alpha\cdot\beta$ and
let $F=\{n_1,\ldots,n_i\}$ be a finite subset of
$\operatorname{dom}\gamma$. We denote
$m_1=(n_1)\alpha,\ldots,m_i=(n_i)\alpha$ and
$k_1=(n_1)\gamma,\ldots,$ $k_i=(n_i)\gamma$. Then we get that
$(m_1)\beta=k_1,\ldots,(m_i)\beta=k_i$. Hence we have that
\begin{equation*}
    U_\alpha(\{n_1,\ldots,n_i\})\cdot
    U_\beta(\{m_1,\ldots,m_i\})\subseteq
    U_\gamma(\{n_1,\ldots,n_i\})
\end{equation*}
and
\begin{equation*}
    \big(U_\gamma(\{n_1,\ldots,n_i\})\big)^{-1}\subseteq
    U_{\gamma^{-1}}(\{k_1,\ldots,k_i\}).
\end{equation*}
Therefore the semigroup operation and the inversion are continuous
in
$(\mathscr{I}_{\infty}^{\,\Rsh\!\!\!\nearrow}(\mathbb{N}),\tau_F)$.

We observe that the group of units $H(\mathbb{I})$ of the
semigroup $\mathscr{I}^{\infty}_\lambda$ with the induced topology
$\tau_F(H(\mathbb{I}))$ from
$(\mathscr{I}^{\infty}_\lambda,\tau_F)$ is a topological group
(see \cite[pp.~313--314, Example]{Guran1981} or
\cite{KarrasSolitar1956}) and the definition of the topology
$\tau_F$ implies that every $\mathscr{H}$-class of the semigroup
$\mathscr{I}^{\infty}_\lambda$ is an open-and-closed subset of the
topological space $(\mathscr{I}^{\infty}_\lambda,\tau_F)$.
Therefore Theorem~2.20~\cite{CP} implies that the topological
space $(\mathscr{I}^{\infty}_\lambda,\tau_F)$ is homeomorphic to a
countable topological sum of topological copies of
$\big(H(\mathbb{I}),\tau_F(H(\mathbb{I}))\big)$. Since every
$T_0$-topological group is a Tychonoff topological space (see
\cite[Theorem~3.10]{Pontryagin1966} or
\cite[Theorem~8.4]{HewittRoos1963}) we conclude that the
topological space $(\mathscr{I}^{\infty}_\lambda,\tau_F)$ is
Tychonoff too. This completes the proof of the proposition.
\end{proof}

\begin{remark}\label{remarke-5.12}
We observe that the topology $\tau_{F}$ on
$\mathscr{I}^{\infty}_\lambda$ induces the discrete topology on the
band $E(\mathscr{I}^{\infty}_\lambda)$.
\end{remark}

\begin{example}\label{example-5.13}
We define a topology $\tau_{W\!F}$ on the semigroup
$\mathscr{I}^{\infty}_\lambda$ as follows. For every
$\alpha\in\mathscr{I}^{\infty}_\lambda$ we define a family
\begin{equation*}
    \mathscr{B}_{W\!F}(\alpha)=\{U_\alpha(F)\mid F \mbox{ is a finite
    subset of } \operatorname{dom}\alpha\},
\end{equation*}
where
\begin{equation*}
    U_\alpha(F)=\{\beta\in\mathscr{I}^{\infty}_\lambda
    \mid \operatorname{dom}\beta\subseteq\operatorname{dom}\alpha \mbox{ and }
    (x)\beta=(x)\alpha \mbox{ for all } x\in F\}.
\end{equation*}
Since conditions (BP1)--(BP3)~\cite{Engelking1989} hold for the
family $\{\mathscr{B}_{W\!F}(\alpha)\}_{\alpha\in
\mathscr{I}^{\infty}_\lambda}$ we conclude that the family
$\{\mathscr{B}_{W\!F}(\alpha)\}_{\alpha\in
\mathscr{I}^{\infty}_\lambda}$ is the base of the topology
$\tau_{W\!F}$ on the semigroup $\mathscr{I}^{\infty}_\lambda$.
\end{example}

\begin{proposition}\label{proposition-5.14}
$(\mathscr{I}^{\infty}_\lambda,\tau_{W\!F})$ is a Hausdorff
topological inverse semigroup.
\end{proposition}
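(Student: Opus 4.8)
The plan is to mirror the structure of the proof of Proposition~\ref{proposition-5.11}, replacing the ``strong'' basic neighbourhoods $U_\alpha(F)$ (which fix domain and range) by the ``weak'' ones of Example~\ref{example-5.13}, which only require $\operatorname{dom}\beta\subseteq\operatorname{dom}\alpha$ together with agreement with $\alpha$ on the finite set $F$. First I would verify Hausdorffness: given distinct $\alpha,\beta\in\mathscr{I}^{\infty}_\lambda$, either $\operatorname{dom}\alpha\neq\operatorname{dom}\beta$, in which case one of them has a point $x$ in its domain outside the other's domain and $U_\alpha(\{x\})$, $U_\beta(\{x\})$ (or a single basic neighbourhood of the one whose domain is not contained in the other's) separate them; or $\operatorname{dom}\alpha=\operatorname{dom}\beta$ but $(x)\alpha\neq(x)\beta$ for some $x$, and then $U_\alpha(\{x\})$ and $U_\beta(\{x\})$ are disjoint since any common element would send $x$ to both $(x)\alpha$ and $(x)\beta$. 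A small case analysis handles the situation $\operatorname{dom}\alpha\subsetneq\operatorname{dom}\beta$: pick $x\in\operatorname{dom}\beta\setminus\operatorname{dom}\alpha$; then no element of $U_\alpha(\varnothing)$ (whose domain lies inside $\operatorname{dom}\alpha$) has $x$ in its domain, while every element of $U_\beta(\{x\})$ does.

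Next I would check separate — indeed joint — continuity of multiplication. Fix $\alpha,\beta$, put $\gamma=\alpha\cdot\beta$, and let $F=\{n_1,\ldots,n_i\}\subseteq\operatorname{dom}\gamma$. As in the proof of Proposition~\ref{proposition-5.11}, set $m_j=(n_j)\alpha$ and $k_j=(n_j)\gamma$, so $(m_j)\beta=k_j$. The key inclusion to establish is
\begin{equation*}
    U_\alpha(\{n_1,\ldots,n_i\})\cdot U_\beta(\{m_1,\ldots,m_i\})\subseteq U_\gamma(\{n_1,\ldots,n_i\}).
\end{equation*}
If $\alpha'\in U_\alpha(\{n_1,\ldots,n_i\})$ and $\beta'\in U_\beta(\{m_1,\ldots,m_i\})$, then $\operatorname{dom}\alpha'\subseteq\operatorname{dom}\alpha$ and $\operatorname{dom}\beta'\subseteq\operatorname{dom}\beta$; one deduces $\operatorname{dom}(\alpha'\cdot\beta')\subseteq\operatorname{dom}(\alpha\cdot\beta)=\operatorname{dom}\gamma$ directly from the definition of composition of partial maps, and for each $n_j$ we have $n_j\in\operatorname{dom}\alpha'$ with $(n_j)\alpha'=(n_j)\alpha=m_j\in\operatorname{dom}\beta'$ and $(m_j)\beta'=(m_j)\beta=k_j$, so $(n_j)(\alpha'\cdot\beta')=k_j=(n_j)\gamma$. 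For continuity of inversion one checks, with $\gamma^{-1}$ and $k_j$ as above, that $\bigl(U_\gamma(\{n_1,\ldots,n_i\})\bigr)^{-1}\subseteq U_{\gamma^{-1}}(\{k_1,\ldots,k_i\})$: if $\beta\in U_\gamma(\{n_1,\ldots,n_i\})$ then $\operatorname{dom}\beta\subseteq\operatorname{dom}\gamma$, whence $\operatorname{dom}\beta^{-1}=\operatorname{ran}\beta\subseteq\operatorname{ran}\gamma=\operatorname{dom}\gamma^{-1}$, and $(k_j)\beta^{-1}=n_j=(k_j)\gamma^{-1}$ because $(n_j)\beta=k_j$. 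This gives the stated result.

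The main obstacle, relative to the $\tau_F$ case, is that the weak neighbourhoods do not control the domain exactly, only from above, so one must be careful that the index sets $\{m_1,\ldots,m_i\}$ and $\{k_1,\ldots,k_i\}$ genuinely lie in $\operatorname{dom}\beta$ and $\operatorname{dom}\gamma^{-1}=\operatorname{ran}\gamma$ respectively — this follows from $F\subseteq\operatorname{dom}\gamma$ and the definitions, but it is the point where the argument could go wrong if $F$ were allowed to be an arbitrary finite subset of $\lambda$ rather than of $\operatorname{dom}\gamma$. I do not expect this topology to be Tychonoff or to induce a nice topology on the band (contrast Remark~\ref{remarke-5.12}), since $E(\mathscr{I}^{\infty}_\lambda)$ inherits from $\tau_{W\!F}$ the topology of the free semilattice in which $\downarrow\varepsilon$ is open, which is non-discrete; but regularity/Tychonoffness is not claimed in the proposition, so the proof stops once Hausdorffness and the continuity of the operations are in hand.
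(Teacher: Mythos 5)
Your plan reproduces the paper's own argument (same basic inclusions, same case analysis for Hausdorffness), and your Hausdorffness part is fine; but the two key inclusions you carry over from the $\tau_F$-case are false for $\tau_{W\!F}$ as defined in Example~\ref{example-5.13}, and the justifications you add are exactly the steps that break. For multiplication, it is not true that $\operatorname{dom}\alpha'\subseteq\operatorname{dom}\alpha$ and $\operatorname{dom}\beta'\subseteq\operatorname{dom}\beta$ yield $\operatorname{dom}(\alpha'\cdot\beta')\subseteq\operatorname{dom}(\alpha\cdot\beta)$ ``directly from the definition'': outside the finite set $F$ the map $\alpha'$ may send a point of $\operatorname{dom}\alpha\setminus\operatorname{dom}\gamma$ into $\operatorname{dom}\beta'$. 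Concretely, fix distinct $x_0,x_1\in\lambda$, let $\alpha=\mathbb{I}$, let $\beta=\gamma=\varepsilon$ be the identity map of $\lambda\setminus\{x_0\}$ and take $F=\varnothing$; the transposition $\alpha'$ interchanging $x_0$ and $x_1$ lies in $U_\alpha(\varnothing)$ and $\beta'=\varepsilon\in U_\beta(\varnothing)$, yet $x_0\in\operatorname{dom}(\alpha'\cdot\beta')$, so $\alpha'\cdot\beta'\notin U_\gamma(\varnothing)$. This particular gap is repairable: replace $U_\alpha(F)$ by $U_\alpha\big(F\cup(\operatorname{dom}\alpha\setminus\operatorname{dom}\gamma)\big)$ (the added set is finite), which forces $\alpha'$ to agree with $\alpha$ at the offending points and restores $\operatorname{dom}(\alpha'\cdot\beta')\subseteq\operatorname{dom}\gamma$.

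The inversion step is a more serious matter. Your inference ``$\operatorname{dom}\beta\subseteq\operatorname{dom}\gamma$, whence $\operatorname{ran}\beta\subseteq\operatorname{ran}\gamma$'' is false for partial injections: with $\gamma=\varepsilon$ as above, the map $\chi$ with $\operatorname{dom}\chi=\lambda\setminus\{x_0\}$ sending $x_1\mapsto x_0$ and fixing all other points satisfies $\operatorname{dom}\chi\subseteq\operatorname{dom}\varepsilon$ but $x_0\in\operatorname{ran}\chi=\operatorname{dom}\chi^{-1}$. Worse, since $x_1$ may be chosen outside any prescribed finite $F\subseteq\operatorname{dom}\varepsilon$, such a $\chi$ lies in every basic neighbourhood $U_\varepsilon(F)$, while $\chi^{-1}$ lies in no $U_{\varepsilon^{-1}}(F')$ at all; hence no choice of basic neighbourhoods yields $\big(U_\varepsilon(F)\big)^{-1}\subseteq U_{\varepsilon^{-1}}(\varnothing)$, i.e.\ with the base exactly as written the inversion is not continuous at $\varepsilon$, and this cannot be patched by shrinking neighbourhoods. (The paper's proof asserts the same two inclusions without justification, so the defect is inherited from it; to obtain a topological inverse semigroup one must strengthen the definition of $U_\alpha(F)$, e.g.\ by also requiring $\operatorname{ran}\beta\subseteq\operatorname{ran}\alpha$, after which the inversion inclusion becomes an equality and the multiplication argument goes through with the domain repair above together with the symmetric repair controlling the finitely many $\beta$-preimages of $\operatorname{ran}\beta\setminus\operatorname{ran}\gamma$.) As written, your proposal therefore establishes Hausdorffness but not continuity of multiplication (fixable) nor of inversion (not fixable for the stated base).
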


\begin{proof}
Let $\alpha$ and $\beta$ be arbitrary elements of the semigroup
$\mathscr{I}^{\infty}_\lambda$. We put $\gamma=\alpha\cdot\beta$ and
let $F=\{n_1,\ldots,n_i\}$ be a finite subset of
$\operatorname{dom}\gamma$. We denote
$m_1=(n_1)\alpha,\ldots,m_i=(n_i)\alpha$ and
$k_1=(n_1)\gamma,\ldots$, $k_i=(n_i)\gamma$. Then we get that
$(m_1)\beta=k_1,\ldots,(m_i)\beta=k_i$. Hence we have that
\begin{equation*}
    U_\alpha(\{n_1,\ldots,n_i\})\cdot
    U_\beta(\{m_1,\ldots,m_i\})\subseteq
    U_\gamma(\{n_1,\ldots,n_i\})
\end{equation*}
and
\begin{equation*}
    \big(U_\gamma(\{n_1,\ldots,n_i\})\big)^{-1}\subseteq
    U_{\gamma^{-1}}(\{k_1,\ldots,k_i\}).
\end{equation*}
Therefore the semigroup operation and the inversion are continuous
in $(\mathscr{I}^{\infty}_\lambda,\tau_{W\!F})$.

Later we shall show that the topology $\tau_{W\!F}$ is Hausdorff.
Let $\alpha$ and $\beta$ be arbitrary distinct points of the space
$(\mathscr{I}^{\infty}_\lambda,\tau_{W\!F})$. Then only one of the
following conditions holds:
\begin{itemize}
    \item[$(i)$]
    $\operatorname{dom}\alpha=\operatorname{dom}\beta$;

    \item[$(ii)$]
    $\operatorname{dom}\alpha\neq\operatorname{dom}\beta$.
\end{itemize}

In case $\operatorname{dom}\alpha=\operatorname{dom}\beta$ we have
that there exists $x\in\operatorname{dom}\alpha$ such that
$(x)\alpha\neq(x)\beta$. The definition of the topology
$\tau_{W\!F}$ implies that $U_\alpha(\{x\})\cap
U_\beta(\{x\})=\varnothing$.

If $\operatorname{dom}\alpha\neq\operatorname{dom}\beta$, then
only one of the following conditions holds:
\begin{itemize}
    \item[$(a)$]
    $\operatorname{dom}\alpha\subsetneqq\operatorname{dom}\beta$;

    \item[$(b)$]
    $\operatorname{dom}\beta\subsetneqq\operatorname{dom}\alpha$;

    \item[$(c)$]
    $\operatorname{dom}\alpha\setminus\operatorname{dom}\beta\neq\varnothing$
    and
    $\operatorname{dom}\beta\setminus\operatorname{dom}\alpha\neq\varnothing$.
\end{itemize}

Suppose that case $(a)$ holds. Let be
$x\in\operatorname{dom}\beta\setminus\operatorname{dom}\alpha$ and
$y\in\operatorname{dom}\alpha$. The definition of the topology
$\tau_{W\!F}$ implies that  $U_\alpha(\{y\})\cap
U_\beta(\{x\})=\varnothing$.

Case $(b)$ is similar to $(a)$.

Suppose that case $(c)$ holds. Let be
$x\in\operatorname{dom}\beta\setminus\operatorname{dom}\alpha$ and
$y\in\operatorname{dom}\alpha\setminus\operatorname{dom}\beta$.
The definition of the topology $\tau_{W\!F}$ implies that
$U_\alpha(\{y\})\cap U_\beta(\{x\})=\varnothing$.

This completes the proof of the proposition.
\end{proof}

\begin{remark}
We observe that the topology $\tau_{W\!F}$ on
$\mathscr{I}^{\infty}_\lambda$ induces a non-discrete topology (and
hence a non-hereditary Baire topology) on the band
$E(\mathscr{I}^{\infty}_\lambda)$. Moreover, $\mathscr{H}$-classes
in $(\mathscr{I}^{\infty}_\lambda,\tau_{W\!F})$ and
$(\mathscr{I}^{\infty}_\lambda,\tau_{F})$ are homeomorphic
subspaces.
\end{remark}


\section*{Acknowledgements}

We acknowledge Taras Banakh for his comments and suggestions. The
authors are grateful to the referee for several comments and
suggestions which have considerably improved the original version of
the manuscript.



\begin{thebibliography}{99}

\bibitem{BanakhDimitrova2010} T.~Banakh and S.~Dimitrova,
\emph{Openly factorizable spaces and compact extensions of
topological semigroup}, Comment. Math. Univ. Carolin. \textbf{51}:1
(2010), 113---131 (arXiv: 0811.4272).

\bibitem{Beida1980} A.~A.~Be{\u{\i}}da, {\it Continuous inverse
semigroups of open partial homeomorphisms,} Izv. Vyssh. Uchebn.
Zaved. Mat. no. \textbf{1} (1980), 64--65 (in Russian).

\bibitem{CHK} J.~H.~Carruth, J.~A.~Hildebrant and  R.~J.~Koch,
\emph{The Theory of Topological Semigroups}, Vol. I, Marcel
Dekker, Inc., New York and Basel, 1983; Vol. II, Marcel Dekker,
Inc., New York and Basel, 1986.

\bibitem{ChuchmanGutik2010?} I.~Ya.~Chuchman and O.~V.~Gutik,
\emph{Topological monoids of almost monotone, injective co-finite
partial selfmaps of positive integers}, Carpathian Mathematical
Publications \textbf{2}:1 (2010), 119---132 (arXiv:1006.4873).

\bibitem{CP} A.~H.~Clifford and  G.~B.~Preston, \emph{The
Algebraic Theory of Semigroups}, Vol. I., Amer. Math. Soc. Surveys
7, Pro\-vidence, R.I., 1961; Vol. II., Amer. Math. Soc. Surveys 7,
Providence, R.I., 1967.

\bibitem{Colmez1951} J.~Colmez, \emph{Sur les espaces pr\'{e}compacts},
C. R. Acad. Sci. Paris {\bf 233} (1951), 1552---1553.

\bibitem{Engelking1989} R.~Engelking, \emph{General Topology},
2nd ed., Heldermann, Berlin, 1989.

\bibitem{HewittRoos1963} E.~Hewitt and K.~A.~Roos, \emph{Abstract
Harmonic Analysis}, Vol.~1, Springer, Berlin, 1963.

\bibitem{GHKLMS} G.~Gierz, K.~H.~Hofmann, K.~Keimel, J.~D.~Lawson,
M.~W.~Mislove and D. S. Scott, \emph{Continuous Lattices and
Domains}, Cambridge Univ. Press, Cambridge, 2003.

\bibitem{Guran1981} I.~I.~Guran, \emph{Topology of an infinite
symmetric group and condensation}, Comment. Math. Univ. Carolin.
\textbf{22}:2 (1981), 311---316 (in Russian).

\bibitem{GutikLawsonRepovs2009} O.~Gutik, J.~Lawson and
D.~Repov\v{s}, {\em Semigroup closures of finite rank symmetric
inverse semigroups}, Semigroup Forum {\bf 78}:2 (2009), 326---336.

\bibitem{GutikPavlyk2005} O.~V.~Gutik and K.~P.~Pavlyk, {\em On
topological semigroups of matrix units}, Semigroup Forum {\bf 71}:3
(2005), 389---400.

\bibitem{GutikPavlykReiter2009} {O.~Gutik, K.~Pavlyk and
A.~Reiter,} {\em Topological semigroups of matrix units and
countably compact Brandt $\lambda^0$-extensions}, Mat. Stud.
\textbf{32}:2 (2009), 115---131.


\bibitem{GutikReiter2009} {O. V. Gutik and A.~R. Reiter,}
\emph{Symmetric inverse topological semigroups of finite rank
$\leqslant n$}, Mat. Metody Phis.-Mech. Polya {\bf 52}:3 (2009),
7---14.

\bibitem{GutikReiter2010?} {O. Gutik and A. Reiter,}
\emph{On semitopological symmetric inverse semigroups of a bounded
finite rank}, Visnyk Lviv Univ. Ser. Mech. Math. \textbf{72} (2010),
94---106 (in Ukrainian).

\bibitem{GutikRepovs2010?} O.~Gutik and D.~Repov\v{s},
\emph{Topological monoids of monotone, injective partial selfmaps of
$\mathbb{N}$ having cofinite domain and image}, Stud. Sci. Math.
Hungar. \textbf{48} (3) (2011), 342---353 (arXiv: 1108.2848).

\bibitem{Howie1995} J.~M.~Howie,
\emph{Fundamentals of Semigroup Theory}, London Math. Soc.
Monographs, New Ser. \textbf{12},  Clarendon Press, Oxford, 1995.

\bibitem{KarrasSolitar1956} A.~Karras and D. Solitar, \emph{Some
remarks on the infinite symmetric groups}. Math. Z. \textbf{66}
(1956), 64---69.

\bibitem{Orlov1974} S.~D.~Orlov, {\em Topologization of the
generalized group of open partial homeomorphisms of a locally
compact Hausdorff space}, Izv. Vyssh. Uchebn. Zaved. Mat. no.
\textbf{11}(150) (1974), 61---68 (in Russian).

\bibitem{Orlov1974a} S.~D.~Orlov, {\em
On the theory of generalized topological groups}, Theory of
Semigroups and its Applications, Saratov Univ. Press. no. \textbf{3}
(1974), 80---85 (in Russian).

\bibitem{Petrich1984} M.~Petrich, \emph{Inverse Semigroups}, John
Wiley $\&$ Sons, New York, 1984.

\bibitem{Pontryagin1966} L.~S.~Pontryagin, \emph{Topological Groups},
Gordon $\&$ Breach, New York ets, 1966.

\bibitem{Ruppert1984} W.~Ruppert, \emph{Compact Semitopological
Semigroups: An Intrinsic Theory}, Lecture Notes in Mathematics,
Vol.~1079, Springer, Berlin, 1984.

\bibitem{Subbiah1987} S.~Subbiah, {\it The compact-open topology
for semigroups of continuous selfmaps,} Semigroup Forum {\bf 35}:1
(1987), 29---33.

\bibitem{Wagner1952} V.~V.~Wagner, \emph{Generalized groups},
Dokl. Akad. Nauk SSSR \textbf{84} (1952), 1119---1122 (in Russian).
\end{thebibliography}
\end{document}